\numberwithin{equation}{section}
\theoremstyle{plain}
\newtheorem{theorem}{Theorem}[section]
\newtheorem{lemma}[theorem]{Lemma}
\newtheorem{cor}{Corollary}[theorem]
\theoremstyle{definition}
\newtheorem{definition}{Definition}[section]
\theoremstyle{remark}
\newtheorem*{remark}{Remark}
\newcommand{\nf}{{\nicefrac{+}{-}}}
\newcommand{\lJ}{\llbracket}
\newcommand{\rJ}{\rrbracket}
\newcommand{\lA}{\{}
\newcommand{\rA}{\}}
\newcommand{\cpm}{{p}}
\newcommand{\trace}{{\text{tr}}}
\newcommand{\bfx}{{x}}
\newcommand{\bfb}{{b}}
\newcommand{\bfa}{{a}}
\newcommand{\bfI}{{I}}
\newcommand{\RT}{{\mathbb{R}^3}}
\newcommand{\rgrad}{\nabla}
\newcommand{\bgrad}{\nabla_\Gamma}
\newcommand{\hgrad}{\nabla_{\Gamma_h}}
\newcommand{\blaplace}{\Delta_\Gamma}
\newcommand{\hlaplace}{\Delta_{\Gamma_h}}
\newcommand{\bn}{n}
\newcommand{\hn}{n_h}
\newcommand{\hK}{{K}}
\newcommand{\bK}{{K^{\ell}}}
\newcommand{\hE}{{E}}
\newcommand{\bE}{{E^{\ell}}}
\newcommand{\bEnK}{n_{\partial\bK}}	    
\newcommand{\bEn}{n_{E^\ell}}	    
\newcommand{\hEnK}{n_{\partial\hK}}	    
\newcommand{\hEn}{n_{E}}	
\newcommand{\bP}{P}
\newcommand{\hP}{\bP_h}
\newcommand{\tnorm}[1]{\vert\hspace{-0.3mm}\Vert#1\Vert\hspace{-0.3mm}\vert_{\Gamma_h}}
\newcommand{\bnorm}[1]{\vert\hspace{-0.3mm}\Vert#1\Vert\hspace{-0.3mm}\vert_\Gamma}
\newcommand{\proofterm}[1]{\vspace{0.5em}\noindent\emph{#1}\,}
\newcommand{\adg}{{a}} 
\newcommand{\leftgrad}{\smash[t]{\overset{\text{\scriptsize$\leftarrow$}}{\nabla}}}
\newcommand{\lbgrad}{{\leftgrad_\Gamma}}
\newcommand{\lhgrad}{{\leftgrad_{\Gamma_h}}}
\newcommand{\lproj}{\lfloor}
\newcommand{\rbproj}{\rfloor_{\bP}}
\newcommand{\rhproj}{\rfloor_{\hP}}
\begin{document}

\title{\bf A continuous/discontinuous {G}alerkin method and a priori
error estimates for the biharmonic problem on surfaces}

\author{
Karl Larsson \footnotemark[2]
    \qquad
  Mats G. Larson \footnotemark[3]
\\[4mm]\it Department of Mathematics and Mathematical Statistics, \\\it Ume{\aa} University, 
SE-901 87 Ume{\aa}, Sweden
}

\date{}


\maketitle
\renewcommand{\thefootnote}{\fnsymbol{footnote}}
\footnotetext[2]{{\tt karl.larsson@umu.se}}
\footnotetext[3]{{\tt mats.larson@umu.se}}

\begin{abstract}
\noindent
We present a continuous/discontinuous Galerkin method for approximating solutions to a fourth order elliptic PDE on a surface embedded in $\RT$.
A priori error estimates, taking both the approximation of the surface and the approximation of surface differential operators into account, are proven in a discrete energy norm and in $L^2$ norm.
This can be seen as an extension of the formalism and method originally used by Dziuk \cite{Dziuk1988} for approximating solutions to the Laplace--Beltrami problem,
and within this setting this is the first analysis of a surface finite element method formulated using higher order surface differential operators.
Using a polygonal approximation $\Gamma_h$ of an implicitly defined surface $\Gamma$ we employ continuous piecewise quadratic finite elements to approximate solutions to the biharmonic equation on $\Gamma$. Numerical examples on the sphere and on the torus confirm the convergence rate implied by our estimates.
\end{abstract}

\section{Introduction}
\subsection{Model problem and earlier work}
Numerical solutions to fourth order PDE on surfaces have several applications,
for example thin shells \cite{chapelle2003}, the Cahn--Hilliard equations \cite{Cahn1958}, or lubrication modeling \cite{lubrication}.
In this paper we for purposes of method development and analysis consider the following fourth order model problem.
Let $\Gamma$ be a smooth two-dimensional surface without boundary embedded in $\RT$.
For $f$ satisfying $\int_\Gamma f \, ds = 0$, find $u$ satisfying $\int_\Gamma u \, ds = 0$ such that
\begin{align}
\blaplace^2 u = f \quad \text{on $\Gamma$} 
\label{eq:introstrong}
\end{align}
where $\blaplace^2 u := \blaplace(\blaplace u)$ and $\blaplace$ is the Laplace--Beltrami operator.
We call this the biharmonic equation on the surface $\Gamma$.

We follow the formalism first used in \cite{Dziuk1988} for solving the Laplace--Beltrami problem
where $\Gamma$ is implicitly defined using an oriented distance function and the surface differential operators are constructed using the tangential gradient $\bgrad := \bP\rgrad$, i.e. the projection of the Cartesian gradient onto the tangential plane.
These initial results have since been extended in various ways and for various problems formulated using the second order Laplace--Beltrami operator, yielding weak formulations with terms of the form
$\int_\Gamma \bgrad u \cdot \bgrad v \, ds$
(cf. \cite{DemlowD07,Dziuk2007,Demlow2009,Olshanskii2009,Demlow2012,burman2015,hansbo2015}). By employing a second order splitting method \cite{Dziuk2007} also consider fourth order linear diffusion and the Cahn--Hilliard equation in the same framework yielding two coupled systems of equations.
In this paper we however develop a method and analysis based on a more direct approach for the fourth order surface bi-Laplacian
$\blaplace^2$. We propose and implement a continuous/discontinuous Galerkin (c/dG) method \cite{Engel2002} for the biharmonic equation on a surface and extend the analysis in \cite{Dziuk1988} to cover this method. For the second order Laplace--Beltrami problem discontinuous Galerkin methods was considered in \cite{Dedner2013,dedner2014}.

The advantages of using implicitly defined surfaces rather than global or local parameterizations are several.
As can be seen in \cite{Dziuk1988} implementation and analysis becomes fairly straightforward. The formalism is also suitable for problems where parameterization is unavailable, as may be the case in problems on evolving surfaces (cf. \cite{dziuk2007a, Olshanskii2009}). For a more thorough review of finite element methods for various surface PDE we refer to \cite{Dziuk2013}.

\subsection{Main contributions}

To our knowledge, this paper is the first paper presenting an analysis of a
surface finite element method formulated using higher order
surface differential operators, i.e. operators other than $\bgrad$, in the framework introduced in \cite{Dziuk1988}.
The formalism and tools in the framework is extended which will be valuable in
future analysis of both conforming and non-conforming finite element methods for higher order PDE on surfaces.
In particular, higher order tangential derivatives in the embedded setting are carefully defined such that they are independent of
artificial out-of-plane components in lower order derivatives. From this follows
clearly formulated surface Sobolev spaces of arbitrary order.
Further, the proof of the $L^2$ estimate requires a more refined approach, compared to the case of the Laplace--Beltrami operator, which utilizes the high regularity of the exact solution and the solution to the dual problem.

In the remainder of this section we summarize the main contributions of the paper.
For increased readability this summary is written in an informal fashion and we leave technicalities
such as formally defining operations on functions defined on different domains for Section~\ref{section:geom}.

\proofterm{Higher order differential operators on surfaces:}
As the first surface finite element method in the framework of \cite{Dziuk1988} formulated using higher order
differential operators, a number of definitions and technical results are needed for implementation
and analysis. Especially, we give the following contributions.
\begin{itemize}
\item Definitions of higher order surface differential operators (other than the Laplace--Beltrami operator) and higher order surface Sobolev spaces in a tangential calculus setting, see \S\ref{section:sobolev}.
\item Lemma~\ref{sobolevestimate} for Sobolev norm comparison between the exact and approximate surface, is a substantial extension of results in \cite[Lemma 3.1]{Dziuk1988} and \cite{Demlow2009}. In particular, the addition of estimate \eqref{aa4} greatly augment the use of this lemma when working with higher order Sobolev spaces.
\end{itemize}

\proofterm{A priori energy error estimate:}
Let $u_h$ be the finite element solution to the method: Find $u_h \in W_h$ such that $a_h(u_h,v_h)=l_h(v_h)$ for all $v_h\in W_h$, where $W_h$ is the finite element space based on continuous piecewise quadratic interpolation over $\Gamma_h$.
Here $a_h(\cdot,\cdot)$ is the symmetric bilinear form of the continuous/discontinuous Galerkin (c/dG) method formulated using approximate surface differential operators based on $\Gamma_h$.
Also, both $a_h(\cdot,\cdot)$ and the linear functional $l_h(\cdot)$ are integrated over $\Gamma_h$.
Based on the exact geometry $\Gamma$ we have the corresponding bilinear form $a(\cdot,\cdot)$
and linear functional $l(\cdot)$ for which $a(u,v)=l(v)$
for all $v \in H^4(\Gamma) + W_h$ where $u$ is the exact solution to \eqref{eq:introstrong}.
The following error estimate (see Theorem~\ref{maintheorem}) holds
\begin{align}
\bnorm{ u - u_h } \leq C h \| f \|_{L^2(\Gamma)}
\end{align}
where $C$ is a mesh independent constant and $\bnorm{\cdot}$ is the energy norm.
The proof of this estimate follows from the first Strang lemma and we derive estimates
for the following three terms
\begin{align} \label{eq:strangterms}
\bnorm{ u - u_h } &\lesssim 
\bnorm{ u - \pi u }
+
\sup_{w_h \in W_h} \frac{| a(\pi u,w_h) - a_h(\pi u,w_h) |}{\bnorm{ w_h }}
\\&\quad+
\sup_{w_h \in W_h} \frac{| l(w_h) - l_h(w_h) |}{\bnorm{ w_h }}
\nonumber
\end{align}
where $\pi:H^4(\Gamma)+W_h \rightarrow W_h$ is an interpolant. As this is a fourth order problem the first term above,
i.e. the interpolation error in energy norm, will be $\mathcal{O}(h)$ as $W_h$ is based on quadratic interpolation.
While increasing the order of geometry approximation from linear (facets) to piecewise quadratic would improve
the estimates for the two last terms in \eqref{eq:strangterms}, i.e. the quadrature error,
it would not affect the interpolation term. Thus, only increasing the order of geometry interpolation would not increase
the order of convergence in this method.

\proofterm{Non-standard analysis for the $L^2$ estimate:}
As Galerkin orthogonality doesn't hold due to geometry approximation,
i.e. for $v\in W_h$ we typically have $a(u-u_h,v) \neq 0$, 
we will get a remainder term when using a duality argument (Aubin--Nitsche's trick)
to derive an $L^2$ estimate (see Theorem~\ref{lemma:L2Gamma}).
In particular, letting $\phi\in H^4(\Gamma)$ be the solution to a dual problem,
we similarly to the Strang lemma get the following expression 
in the proof of Theorem~\ref{lemma:L2Gamma}
\begin{align}
&\| u - u_h \|_{L^2(\Gamma) / \mathbb{R}}^2
=
a(u - u_h,\phi - \pi\phi)
+
a(u - u_h,\pi\phi)
\\
&\qquad\quad=
\underbrace{a(u - u_h,\phi - \pi\phi)}_{\mathcal{O}(h^2)}
+
\underbrace{l(\pi\phi)-l_h(\pi\phi)}_{\mathcal{O}(h^2)}
+
\underbrace{a_h(u_h,\pi\phi) - a(u_h,\pi\phi)}_{???}
\label{eq:ksgjdn}
\end{align}
where the first term is $\mathcal{O}(h^2)$ due to the error and interpolation estimates in energy norm, and the second term is also $\mathcal{O}(h^2)$, see \eqref{ldiff}. However, for the third term using the same estimates
as in the energy norm estimate only gives $\mathcal{O}(h)$, see \eqref{adiff}, which is not sharp enough.
This estimate can be improved as follows.
By adding and subtracting terms, and using error and interpolation energy norm
estimates, the problem of estimating the third term is transformed into estimating
\begin{align}
a_h(u,\phi) - a(u,\phi) \label{eq:kgufng}
\end{align}
where it is important to note that $u,\phi \in H^4(\Gamma)$ by elliptic regularity. The regularity
of $u$ and $\phi$ is then utilized in the following results:
\begin{itemize}
\item Lemma~\ref{lemma:geomnonstandard} is a non-standard geometry approximation result for $\hP\cdot\bn$, the exact normal projected onto the approximate tangential plane, where we instead of using max-norm estimates prove an estimate for $(\hP\cdot\bn)\cdot\chi$
integrated over $\Gamma_h$
where $\chi$ is a sufficiently regular vector valued function. The resulting estimate is of one order higher than the max-norm estimate for $\hP\cdot\bn$.
\item In estimate \eqref{acontdiff} in Lemma~\ref{lemma:adiff} we prove that the expression in \eqref{eq:kgufng} is $\mathcal{O}(h^2)$.
For all non-zero terms in \eqref{eq:kgufng} we employ Lemma~\ref{lemma:geomnonstandard} to obtain the correct order.
\end{itemize}
An increase in geometry interpolation to continuous piecewise quadratics
would not improve the order of convergence for the $L^2$ estimate either as the first term in \eqref{eq:ksgjdn}
would still only be $\mathcal{O}(h^2)$ by the error and interpolation estimates in energy norm.
However, in this case the estimate of the third term can be done using standard techniques.
We finally remark that in the case of the Laplace--Beltrami operator the corresponding estimate of the
third term is $\mathcal{O}(h^2)$ and can be derived using standard techniques.

\subsection{Outline}

The remainder of this paper is dispositioned as follows. In \S\ref{geom} we
introduce the geometric description of the surface $\Gamma$ and define tangential
derivatives of arbitrary order which we use to define the surface differential operators.
Using the tangential derivatives we define suitable Sobolev spaces of any order on curved surfaces.
In \S\ref{section:geom} we provide assumptions and geometry approximation results for a triangulation $\Gamma_h$ of
the exact surface $\Gamma$.
By extending the domain of functions defined on either $\Gamma$ and $\Gamma_h$ to a volumetric neighborhood to $\Gamma$
we also provide comparisons of functions in surface Sobolev norms on $\Gamma$ and $\Gamma_h$.
In \S\ref{section:cdG} we begin by introducing the biharmonic problem on surfaces
and derive a broken weak formulation of the problem with bilinear form $a(\cdot,\cdot)$ and linear functional $l(\cdot)$. 
A continuous/discontinuous Galerkin method is then formulated using
surface differential operators and integration based on the approximate surface $\Gamma_h$
rendering an approximate bilinear form $a_h(\cdot,\cdot)$ and linear functional $l_h(\cdot)$. 
As approximation space we choose the space of continuous piecewise quadratic functions over $\Gamma_h$.
We derive a priori error estimates for this method in \S\ref{section:apriori}, both in a discrete energy norm and in $L^2$ norms on $\Gamma$ and $\Gamma_h$. Finally, to support our theoretical findings we in \S\ref{section:numerics} give
numerical results for two model problems with known analytical solutions.


\section{Surface geometry and differential operators}
\label{geom}

Let $\Gamma$ be a smooth two-dimensional surface without boundary embedded in $\mathbb{R}^3$. 
Assuming that $\Gamma$ is represented by an oriented distance function $d(x)$, giving positive values on the exterior of $\Gamma$, we have an outward pointing unit normal given by $\bn(x) = \rgrad d(x) \in \RT$ and extended Weingarten map given by $H(x) = \nabla^2 d \in \mathbb{R}^{3\times 3}$. 
The eigenvalues of $H$ are $\{\kappa_1(x), \kappa_2(x), 0\}$ with corresponding orthogonal eigenvectors where the eigenvector corresponding to the zero eigenvalue is in the normal direction $\bn(x)$.
For $x\in\Gamma$, $\{\kappa_i\}$ are the principal curvatures of $\Gamma$ and
thus $\trace(H)=2N$, where $N$ is the mean curvature.


We now turn to introducing the differential operators 
used to describe our equations on $\Gamma$ and we also introduce the appropriate Sobolev spaces.

\subsection{Tangential differential calculus} \label{tangentialdifferential}

As in \cite{Dziuk1988} we define the operators using tangential differential calculus \cite{delfour2000}, avoiding the need for local coordinates and Christoffel symbols. The tangential projection along $\Gamma$ is given by
$\bP = I - \bn \otimes \bn$
and we use this projection to define differential operators on the surface expressed in the global Cartesian coordinate system. 

In any of the literature cited in the introduction we have not found any explicit definition, or reference to such, of surface differential operators $D^k_\Gamma w$ for $k \geq 2$ in an embedded setting such that these only contain tangential derivatives of order $k$.
For example, as noted in \cite{delfour2000}, the Hessian $\bgrad\otimes\bgrad w$ will be unsymmetric and contain out-of-plane components.
We therefore provide an effort in this paper to explicitly define higher order surface differential
operators and surface Sobolev spaces in an embedded setting.

\subsection{Surface Sobolev spaces} \label{sect:surfsobolev}

Let $L^2(\Gamma)$ be the usual $L^2$-space on $\Gamma$ with norm
$\| T \|_{L^2(\Gamma)} := \left(\int_\Gamma T:T \, ds \right)^{1/2}$
where $T$ is a tensor (in the sense of a multi-linear array) and $:$ denotes contraction
in every tensorial dimension.
Further, let $H^m(\Gamma)$, $m\in\mathbb{N}_0^+$, be Sobolev spaces on $\Gamma$
with norm and semi-norm
\begin{align}
\|w\|_{H^m(\Gamma)}^2 := \sum_{k=0}^m | w |_{H^k(\Gamma)}^2
\qquad\text{and}\qquad
| w |_{H^k(\Gamma)} := \| D^k_{\Gamma} w \|_{L^2(\Gamma)}
\end{align}
respectively, where $D^k_{\Gamma} w$ is the tensor of order $k$ tangential derivatives defined by
\begin{align}
D^k_{\Gamma} w :=
\left\{
\begin{alignedat}{2}
& w
\qquad &&\text{for $k = 0$}
\\
&\left\lproj \left( D^{k-1}_{\Gamma} w \right) \otimes \leftgrad \right\rbproj
\qquad &&\text{for $k \geq 1$}
\end{alignedat}\right.
\label{eq:Dk}
\end{align}
Here the arrow over the gradient indicates that $\leftgrad$ operates to the left and $\left\lproj \cdot \right\rbproj$ denotes the projection onto the tangent space in each tensorial dimension.
To express this projection more formally we use the $n$-mode product denoted $\times_n$  which for a $k$:th order tensor $T \in \mathbb{R}^{3\times \cdots \times 3}$ and a matrix $A\in\mathbb{R}^{3\times 3}$ componentwise is defined as
\begin{align}
\left(T \times_n A\right)_{i_1 i_2 \cdots i_k}
= \sum_{j = 1}^3 T_{i_1 \cdots i_{n-1} j i_{n+1} \cdots i_k} A_{i_n j}
\qquad
\text{for $i_1,\cdots,i_k \in \{1,2,3\}$}
\label{eq:nmodematrix}
\end{align}
see eg. \cite[Sect. 2.5]{tensor2009}. For example, for two matrices $A,B\in\mathbb{R}^{3\times 3}$, $AB = B \times_1 A = A \times_2 B^T$.
In this notation the projection of a $k$:th order tensor $T$ is written $\left\lproj T \right\rbproj  = T \times_1 P \times_2 P ... \times_{k} P$ and we remark that this expression is independent of the order in which the $n$-mode products are evaluated.
Note that the projection will ensure that no out-of-plane components exist and in turn the inductive definition
\eqref{eq:Dk} ensures that higher order derivatives are not affected by artificial out-of-plane components (derivatives of out-of-plane components may of course be tangential).
We will also use the following notation for tangential derivatives of a tensor $T$
\begin{align}
D_\Gamma T := \left\lproj T \otimes \leftgrad \right\rbproj
\label{eq:tensorderivative}
\end{align}
and we note that $D_\Gamma^k w = \underbrace{D_\Gamma D_\Gamma \cdots D_\Gamma}_{\text{$k$ copies}} w$.

In terms of the tangential gradient $\bgrad := \bP \nabla$ we may explicitly state the first two tensors
of tangential derivatives
\begin{align}
D^1_\Gamma w &= \left\lproj w  \otimes \leftgrad \right\rbproj
=
\left\lproj \rgrad w \right\rbproj
=
(\rgrad w) \times_1 P
= P\nabla w = \bgrad w
\\
D^2_{\Gamma} w &= \left\lproj (\bgrad w)  \otimes \leftgrad \right\rbproj
=
\bigl( (\bgrad w) \otimes \leftgrad \bigr) \times_1 P \times_2 P
\\&\quad\nonumber
=
\Bigl(\bigl( (\bgrad w) \otimes \leftgrad \bigr)P^T \Bigr) \times_1 P
= \bP \Bigl( (\bgrad w) \otimes \lbgrad \Bigr)
\end{align}
and we note that the Laplace--Beltrami operator is
$\blaplace w := \bgrad \cdot \bgrad w = \trace(D^2_{\Gamma} w)$.

\section{Geometry approximation} \label{section:geom}

We define a volumetric neighborhood $U$ to $\Gamma$ by
$
U = \{ x \in \mathbb{R}^3 \ | \ \text{dist}(x,\Gamma) < \delta \}
$
where $\delta$ is small enough such that the
closest point mapping $\cpm:U\rightarrow \Gamma$ defined by
\begin{align}
\cpm(x) = x - d(x)\bn(\cpm(x))
\label{eq:cpm}
\end{align}
is unique and there exists a constant $C$ such that
\begin{align}
\| D^\alpha d \|_{L^\infty(U)} \leq C \qquad \text{for $0 \leq |\alpha| \leq k+1$}
\label{dcrit}
\end{align}
where $\alpha$ is a multi-index, $\{ D^\alpha d \}$ is the set of all Cartesian partial derivatives of order $|\alpha|$ and $\left\| \cdot \right\|_{L^\infty(U)} := \sup_{x\in U} | \cdot |$.
The minimum value of $k$ in \eqref{dcrit} is determined by the highest order of Sobolev norm approximation needed when using Lemma~\ref{sobolevestimate}, so for the purposes of the analysis in this paper we assume $k=3$.

Throughout this paper we will use assumptions and approximation results from
\cite{Dziuk1988,DemlowD07,Demlow2009}
which we present in this section.
While we try to provide complete proofs for the approximation results we
especially recommend reviewing \cite{Demlow2009}
for more general results.

\subsection{Approximate surface $\Gamma_h$ and partitioning of $\Gamma$} \label{geomapprox}
As an approximation to $\Gamma$ we consider a discrete polygonal surface $\Gamma_h \subset U$ with triangular faces whose vertices lie on $\Gamma$.
Further, let the triangle faces be shape regular and quasi-uniform of diameter $h$, cf. \cite{LarsonBengzon}, and denote the set of triangle faces $\mathcal{K}=\{ K \}$. Let $\mathcal{E}=\{ E \}$ be the set of edges in $\mathcal{K}$.
The face normal on each face $K$ is denoted by $\hn$ and the conormal to $\hK$ is denoted by $\hEnK$. 
Thus, the projection onto the tangent space of the approximate surface is given by $\hP=\bfI - \hn\otimes\hn$.
Further, on the exact surface $\Gamma$ we let $\mathcal{K}$ and $\mathcal{E}$ implicitly define a partitioning
through the closest point mapping \eqref{eq:cpm} such that the curved triangles are given by
$\bK = \{ \cpm(\bfx) : \bfx\in\hK \}$
and the curved edges between the curved triangles are given by
$\bE = \{ \cpm(\bfx) : \bfx\in\hE \}$.
To denote the domain consisting of all triangle edges on $\Gamma_h$ respectively on $\Gamma$ we
use the notations
\begin{align}
\mathcal{E}_h := \bigcup_{E \in \mathcal{E}} E
\qquad\text{and}\qquad
\mathcal{E}_\Gamma := \bigcup_{E \in \mathcal{E}} E^\ell
\end{align}
We denote the conormal to the curved triangle $\bK$ by $\bEnK$.
An illustration of a curved triangle with its facet approximation and their respective conormals is given in Figure~\ref{fig:conormals}.

For each edge $E \in \mathcal{E}$ between two neighboring triangles we name one triangle $\hK_+$ and the other $K_-$.
On edges we denote the conormals to these triangles, i.e. the outward pointing normals to $\partial\hK_\nf$, by $\hEnK^+$ and $\hEnK^-$, respectively. Analogously, we on each curved edge $\bE$ between curved triangles $\bK_\nf$ denote the conormals by $\bEnK^\nf$. Note that $\bEnK^+ + \bEnK^- = 0$ as $\left(\bK_+ \cup \bK_-\right) \subset \Gamma$
which is smooth.

\begin{figure}
\centering
\includegraphics{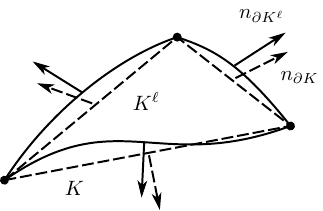}
\caption{Illustration of facet triangle $K$, using dashed lines, and lifted triangle $K^\ell \subset \Gamma$,
using solid lines, with their respective conormals.}
\label{fig:conormals}
\end{figure}

\subsection{Mapping between $\Gamma$ and $\Gamma_h$}
To map functions between the approximate and exact surfaces we extend functions to $U$
such that they are constant in the normal direction $\bn$. We denote extended functions by superscript $\ell$.
More formally, for a function $w$ defined on the exact surface $\Gamma$ we define the extension to $U$ by
\begin{align}
w^\ell(x) =  w \circ \cpm \qquad \text{for $x\in U$} \label{eq:extension1}
\end{align}

For a function $w$ defined on the approximate surface $\Gamma_h$ we first define its lifting onto $\Gamma$ by
\begin{align}
\widetilde{w}(x) = w \circ x_h  \label{explicitextension}
\end{align}
where $x_h(x)$ is the unique solution to $p(x_h)=x$ for $x_h\in\Gamma_h$, i.e. the solution to
\begin{align}
x_h = x + d(x_h) \bn(x)  \qquad \text{where $x_h \in \Gamma_h$ and $x \in \Gamma$}
\end{align}
Analogously to \eqref{eq:extension1}, we then for a function $w$ defined on $\Gamma_h$ define the extension to $U$ by
\begin{align}
w^\ell(x) =  \widetilde{w} \circ \cpm  \qquad \text{for $x\in U$} \label{eq:extension2}
\end{align}

Throughout the remainder of the paper we for clarity of notation let it be implied that functions defined on $\Gamma$ and $\Gamma_h$ are extended to $U$ by \eqref{eq:extension1} and \eqref{eq:extension2}, respectively, and only use the superscript $\ell$ notation when necessary or for emphasis.

\subsection{Geometry approximation results} \label{section:geomapprox}
In the following lemma we collect a number of approximation results for quantities defined by $\Gamma$ and $\Gamma_h$.
Note that even though we here do not explicitly denote geometrical quantities on $\Gamma$ by superscript $\ell$
these are assumed extended to $U$ by \eqref{eq:extension1}.
Further, in this lemma and throughout the paper we will for inequalities use the notation $a \lesssim b$
by which we mean that there exists a constant $c$ independent of the mesh size parameter $h$
such that $a \leq c b$.
\begin{lemma}[Geometry approximation] \label{lemma:normalinterpolation}
Let $\{ \Gamma_h \}$ be a family of polygonal approximations to $\Gamma$
with $\Gamma_h \in U$ and mesh size parameter $0 <h < h_0$.
For sufficiently small $h_0$ the following estimates hold
\begin{align}
\| d \|_{L^\infty(\Gamma_h)} &\lesssim h^2 \label{dest} \\
\| \bn - \hn \|_{L^\infty(\Gamma_h)} &\lesssim  h \label{nest} \\
\| \bP \cdot \hn \|_{L^\infty(\Gamma_h)} &\lesssim  h \label{Pnhest} \\
\| \hP \cdot \bn \|_{L^\infty(\Gamma_h)} &\lesssim  h \label{Phnest} \\
\| 1- \bn\cdot\hn \|_{L^\infty(\Gamma_h)} &\lesssim  h^2 \label{oneest} \\
\| \bEnK^\nf - \bP \hEnK^\nf \|_{L^\infty(\mathcal{E}_h)} &\lesssim  h^2 \label{test}
\end{align}
with constants depending on derivatives of $d$.
\end{lemma}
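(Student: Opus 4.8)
The plan is to prove every estimate elementwise — fixing a triangle $\hK$, and for \eqref{test} an edge $\hE$ — and to reduce each one to local Taylor expansions of the oriented distance function $d$, whose derivatives are uniformly bounded by \eqref{dcrit}. The single geometric fact driving all the normal estimates is that the exact normal $\bn$ has only an $\mathcal{O}(h)$ tangential component relative to the flat triangle, i.e. $|\hP\bn| \lesssim h$, which is exactly \eqref{Phnest}; the remaining estimates \eqref{nest}, \eqref{Pnhest}, \eqref{oneest} then follow from it by elementary algebra using $|\bn| = |\hn| = 1$.

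For \eqref{dest} I would use a polynomial interpolation argument: since the three vertices of $\hK$ lie on $\Gamma$, we have $d = 0$ there, so the affine interpolant of $d$ over $\hK$ vanishes identically and $\|d\|_{L^\infty(\hK)} = \|d - I_h d\|_{L^\infty(\hK)} \lesssim h^2 \|D^2 d\|_{L^\infty(U)}$ by the standard interpolation estimate on a shape-regular triangle. To obtain \eqref{Phnest}, fix a vertex $v_0$ of $\hK$ and Taylor expand $d$ about $v_0$; using $d(v_0) = d(v_i) = 0$, $\rgrad d = \bn$, and \eqref{dcrit} gives $\bn(v_0)\cdot(v_i - v_0) = \mathcal{O}(h^2)$ for the other two vertices $v_i$. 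Since $\hP\bn(v_0)$ lies in the triangle plane spanned by the basis $\{v_i - v_0\}$, which is well-conditioned by shape regularity, and has $\mathcal{O}(h^2)$ components against that basis, $|\hP\bn(v_0)| \lesssim h$; the bound extends to all of $\hK$ because $\bn$ varies by $\mathcal{O}(h)$ across $\hK$ (its gradient is the bounded Weingarten map $H$). From $|\hP\bn|^2 = 1 - (\bn\cdot\hn)^2$ and the orientation $\bn\cdot\hn > 0$ for small $h$ I then get $1 - \bn\cdot\hn \lesssim h^2$, which is \eqref{oneest}; estimate \eqref{nest} follows from $|\bn - \hn|^2 = 2(1 - \bn\cdot\hn)$, and \eqref{Pnhest} from $|\bP\hn|^2 = 1 - (\bn\cdot\hn)^2 = |\hP\bn|^2$.

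The conormal estimate \eqref{test} is the main obstacle, because it is second order while both the normal and the edge-tangent directions individually carry only $\mathcal{O}(h)$ accuracy, so the claim relies on a genuine cancellation. Writing the conormals as cross products, $\bEnK = \bn\times\bEt$ and $\hEnK = \hEt\times\hn$ with the outward orientation, the key preliminary step is to control the edge tangents: differentiating the closest point map $\cpm$ along the straight edge and using $\rgrad\cpm = \bP - dH$ together with $d = \mathcal{O}(h^2)$ from \eqref{dest} yields $\bEt = \bP\hEt/|\bP\hEt| + \mathcal{O}(h^2)$. I would then show that $\bP\hEnK$, which is tangent to $\Gamma$ by construction, is also perpendicular to $\bEt$ up to $\mathcal{O}(h^2)$: expanding $\bP\hEnK\cdot\bEt$ and using $\hEnK\cdot\hEt = 0$ reduces it to the product $(\bn\cdot\hEt)(\bn\cdot\hEnK)$, and each factor is $\mathcal{O}(h)$ — the first because the edge is nearly tangent (the same vertex Taylor argument), the second because $|\bn\cdot\hEnK| \lesssim |\hP\bn| \lesssim h$ by \eqref{Phnest}. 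Since the tangent plane of $\Gamma$ is two-dimensional, being tangent to $\Gamma$ and $\mathcal{O}(h^2)$-perpendicular to $\bEt$ forces $\bP\hEnK$ to be parallel to $\bEnK$ up to $\mathcal{O}(h^2)$; combined with $|\bP\hEnK|^2 = 1 - (\bn\cdot\hEnK)^2 = 1 - \mathcal{O}(h^2)$ and matching orientations, this gives $\bP\hEnK = \bEnK + \mathcal{O}(h^2)$, i.e. \eqref{test}. The delicate bookkeeping of these cancellations, and justifying the $\mathcal{O}(h^2)$ remainders uniformly over all edges, is where I expect the real work to lie, and it closely parallels the finer geometric analysis in \cite{Demlow2009}.
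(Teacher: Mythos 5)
Your proof is correct, but it is organized around a different chain of implications than the paper's, so a comparison is worth recording. For \eqref{dest} you argue exactly as the paper does (the linear interpolant of $d$ over $\hK$ vanishes because the vertices lie on $\Gamma$). After that the routes diverge: the paper proves \eqref{nest} first, by applying a $W^{1,\infty}$ interpolation estimate to the closest point map and writing the exact normal, up to normalization, as $\cpm_\bfa\times\cpm_\bfb$ with $\cpm_\bfb=(\bP-dH)\bfb$, compared against $\hn=\bfa\times\bfb$; the projection estimates \eqref{Pnhest}, \eqref{Phnest} then follow from $\bP\hn=\bP(\hn-\bn)$, and \eqref{oneest} from $1-\bn\cdot\hn=\tfrac12|\bn-\hn|^2$. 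You instead prove \eqref{Phnest} first, by a pointwise Taylor expansion of $d$ at the vertices combined with shape regularity of the basis $\{v_i-v_0\}$, and recover \eqref{oneest}, \eqref{nest}, \eqref{Pnhest} from the identities $|\hP\bn|^2=|\bP\hn|^2=1-(\bn\cdot\hn)^2$ and $|\bn-\hn|^2=2(1-\bn\cdot\hn)$. Your ordering is more elementary --- no interpolation estimate for the vector field $\cpm$ is needed, only bounded second derivatives of $d$ and shape regularity --- whereas the paper's route sets up the quantity $\cpm_\bfb$ and estimate \eqref{tsomeest}, which it then reuses directly in the proof of \eqref{test}. For \eqref{test} both arguments exploit the same two cancellations (the curved-versus-flat tangent discrepancy carries a factor $d=\mathcal{O}(h^2)$ because $\nabla\otimes\cpm=\bP-dH$, and every remaining error is either $1-\bn\cdot\hn=\mathcal{O}(h^2)$ or a product of two $\mathcal{O}(h)$ normal components), but package them differently: the paper uses the vector identity $\bP(\bfb\times\hn)=\bP\bfb\times(\bn\otimes\bn)\hn+(\bn\otimes\bn)\bfb\times\bP\hn$, while you use a two-dimensional tangent-plane argument in which $\hEnK\cdot\bP\hEt=-(\bn\cdot\hEt)(\bn\cdot\hEnK)$ plays the role of the paper's product term. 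Two details to tidy up in your write-up: your stated conventions $\bEnK=\bn\times\bEt$ and $\hEnK=\hEt\times\hn$ are mutually inconsistent if, as your tangent estimate assumes, $\bEt\approx+\hEt$ (fix a single ordering for both); and the ``matching orientations'' step should be made explicit, e.g.\ via $(\bEt\times\bn)\cdot(\hEt\times\hn)=(\bEt\cdot\hEt)(\bn\cdot\hn)-(\bEt\cdot\hn)(\bn\cdot\hEt)\geq 1-Ch^2$, which pins down the sign of the component of $\bP\hEnK$ along $\bEnK$. Neither point is a gap in substance; both are one-line repairs.
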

\begin{proof}
We prove this lemma in Appendix \ref{geomproof}.
\end{proof}

Let $ds$ and $ds_h$ be the surface measures of $\Gamma$ and $\Gamma_h$, respectively. For $\bfx\in\Gamma_h$ we let $\mu_h$ satisfy
$
\mu_h(\bfx) ds_h(\bfx) = ds \circ \cpm(\bfx)
$
and by results in \cite{DemlowD07,Demlow2009} we have
\begin{align}
\mu_h(x) = (\bn\cdot\hn)(1 - d(\bfx)\kappa_1(x))(1 - d(\bfx)\kappa_2(\bfx)) \label{integralmeasure2}
\end{align}
where
\begin{align}
\kappa_i(\bfx)=\frac{\kappa_i(\cpm(\bfx))}{1 + d(\bfx)\kappa_i(\cpm(\bfx))}
\label{eq:kappaextended}
\end{align}
Using \eqref{integralmeasure2}, \eqref{dest}, and \eqref{oneest} yields the estimate
\begin{align}
\| 1 - \mu_h \|_{L^\infty(\Gamma_h)} \lesssim h^2 \label{muest}
\end{align}

Further, we will need the following non-standard geometry approximation result for an integrated quantity. Note that the $L^1(\Gamma_h)$ and $W_1^1(\Gamma_h)$ norms are defined by
\begin{align}
\| \cdot \|_{L^1(\Gamma_h)} := \sum_{K\in\mathcal{K}} \int_{\hK} \| \cdot \| \, ds
\qquad
\| \cdot \|_{W_1^1(\Gamma_h)} := \| \cdot \|_{L^1(\Gamma_h)} + \| \cdot \otimes \leftgrad \|_{L^1(\Gamma_h)}
\label{eq:L1W1norms}
\end{align}
where $\| T \| := \sqrt{T:T}$ for a tensor $T$, i.e. $\| \cdot \|$ is the absolute value for a scalar, the Euclidean norm for a vector and the Frobenius norm for a matrix.
\begin{lemma}[$P_h n$ lemma] \label{lemma:geomnonstandard}
For $\chi\in [W_1^1(\Gamma_h)]^3$ it holds
\begin{align}
\left| \int_{\Gamma_h} (\hP \cdot \bn) \cdot \chi \, ds \right| \lesssim  h^2 \| \chi \|_{W_1^1(\Gamma_h)}
\end{align}
where $\{\Gamma_h\}$ fulfills the requirements of Lemma \ref{lemma:normalinterpolation}.
\end{lemma}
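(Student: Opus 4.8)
The plan is to exploit the fact that the exact normal is a gradient, $\bn = \rgrad d$, together with the flatness of each triangular face, to turn the integrand into a tangential gradient and then integrate by parts. On a single flat face $\hK \in \mathcal{K}$ the projection $\hP$ is constant, so the restriction of $d$ to $\hK$ has intrinsic surface gradient $\kgrad (d|_{\hK}) = \hP \rgrad d = \hP\cdot\bn$. This is the key identity: on each facet $\hP\cdot\bn$ is itself the surface gradient of the distance function $d$. The whole point is that the smallness of $d$, namely $\|d\|_{L^\infty(\Gamma_h)}\lesssim h^2$ from \eqref{dest}, can be transferred onto the integral once the derivative is moved off $d$, which is exactly what integration by parts achieves and what the naive max-norm bound \eqref{Phnest} fails to capture.

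Accordingly, I would apply the tangential Green's formula on each $\hK$ to $\int_{\hK}\kgrad(d|_{\hK})\cdot\chi\,ds$. Since both $\kgrad d$ and the conormal $\hEnK$ lie in the plane of $\hK$, one may freely replace $\chi$ by $\hP\chi$ in the relevant pairings, obtaining a volume term $-\int_{\hK} d\,\bigl(\kgrad\cdot(\hP\chi)\bigr)\,ds$ and a boundary term $\int_{\partial\hK} d\,(\chi\cdot\hEnK)\,ds$. Summing over $\mathcal{K}$, the volume contributions are immediately $\mathcal{O}(h^2)$: bounding $d$ by \eqref{dest} and the surface divergence $\kgrad\cdot(\hP\chi)$ pointwise by $|\chi\otimes\leftgrad|$ (legitimate as $\hP$ is constant on each facet) yields $\|d\|_{L^\infty(\Gamma_h)}\,\|\chi\otimes\leftgrad\|_{L^1(\Gamma_h)}\lesssim h^2\|\chi\|_{W_1^1(\Gamma_h)}$.

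The boundary terms are the crux, and I expect them to be the main obstacle. Since $\Gamma$ is closed, so is $\Gamma_h$, hence every edge $\hE\in\mathcal{E}$ is shared by exactly two faces $\hK_+,\hK_-$; because $\chi\in[W_1^1(\Gamma_h)]^3$ is (in the trace sense) single-valued across interior edges, the two facet contributions combine into $\int_{\hE} d\,\chi\cdot(\hEnK^+ + \hEnK^-)\,ds$. Estimating each term separately would only give $\mathcal{O}(h)$, so the essential gain comes from the near-cancellation of the two conormals: for coplanar faces $\hEnK^+ + \hEnK^- = 0$, and the deviation is governed by the angle between the facet normals, which is $\mathcal{O}(h)$ by \eqref{nest}; equivalently, one reads off $|\hEnK^+ + \hEnK^-|\lesssim h$ from \eqref{test} together with $\bEnK^+ + \bEnK^- = 0$. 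Combining $\|d\|_{L^\infty(\Gamma_h)}\lesssim h^2$ with $|\hEnK^+ + \hEnK^-|\lesssim h$ leaves the edge integral $\sum_{\hE}\int_{\hE}|\chi|\,ds$, which I would control by a scaled trace inequality on each shape-regular facet, $\|\chi\|_{L^1(\partial\hK)}\lesssim h^{-1}\|\chi\|_{L^1(\hK)} + \|\chi\otimes\leftgrad\|_{L^1(\hK)}$, so that $\sum_{\hE}\int_{\hE}|\chi|\,ds \lesssim h^{-1}\|\chi\|_{W_1^1(\Gamma_h)}$. The three factors $h^2\cdot h\cdot h^{-1}$ then recombine to the claimed $\mathcal{O}(h^2)$.

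The delicate points are therefore twofold. First, making the conormal cancellation rigorous while accounting for the out-of-tangent-plane component of $\hEnK^\pm$, which is only $\mathcal{O}(h)$ rather than $\mathcal{O}(h^2)$ — so the safe and sufficient bound is $|\hEnK^+ + \hEnK^-|\lesssim h$. Second, justifying that the traces of $\chi$ genuinely match across edges, which relies on reading $W_1^1(\Gamma_h)$ as a bona fide (continuous) Sobolev space on the polygonal surface rather than a broken one; this is what allows the per-facet boundary terms to coalesce before estimation. Everything else — the Green's formula on flat triangles, the pointwise bound on $\kgrad\cdot(\hP\chi)$, and the scaled trace inequality — is routine.
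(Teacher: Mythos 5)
Your proposal is correct and follows essentially the same route as the paper's own proof: writing $\hP\cdot\bn=\hP\nabla d$ as a facetwise tangential gradient, integrating by parts elementwise, bounding the volume term via $\|d\|_{L^\infty(\Gamma_h)}\lesssim h^2$, and estimating the edge terms by combining the conormal jump bound $\|\hEnK^+ +\hEnK^-\|_{L^\infty(\mathcal{E}_h)}\lesssim h$ (via the exact conormals and \eqref{test}/\eqref{nest}) with the scaled trace inequality, so that $h^2\cdot h\cdot h^{-1}=h^2$. Even the two ``delicate points'' you flag (the $\mathcal{O}(h)$ out-of-plane part of the conormals and the single-valuedness of $\chi$ across edges) are exactly the ingredients the paper uses, the latter implicitly.
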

\begin{proof}
Using Green's formula elementwise we obtain the identity
\begin{align}
\int_{\Gamma_h} ( \hP \bn ) \cdot \chi \, ds
&=
\int_{\Gamma_h} ( \hP \nabla d) \cdot \chi \, ds
\\&=
-\int_{\Gamma_h} d \, \nabla\cdot(\hP \chi) \, ds
+ \int_{\mathcal{E}_h} d \, \lJ \hEnK \rJ \cdot \chi \, dl
= I + II
\end{align}
where $\lJ \hEnK \rJ = \hEnK^+ + \hEnK^-$.

\proofterm{Term I:}
We have estimates
\begin{align}
\left| I \right| &\lesssim \| d \|_{L^\infty(\Gamma_h)}
\left(
\| \nabla\cdot\hn \|_{L^\infty(\Gamma_h)}
\| \hn\cdot\chi \|_{L^1(\Gamma_h)}
+
\| \nabla\cdot\chi \|_{L^1(\Gamma_h)}
\right)
\\&\lesssim
h^2 \| \chi \|_{W_1^1(\Gamma_h)}
\end{align}
where we used Hölder's inequality and the bound \eqref{dest} for $d$.

\proofterm{Term II:}
For the second term we have
\begin{align}
\left| II \right| &:=
\left| \int_{\mathcal{E}_h} d \left\lJ \hEnK \right\rJ \cdot \chi \, dl \right|
\\&\lesssim
\| d \|_{L^\infty(\mathcal{E}_h)}
\| \lJ \hEnK \rJ \|_{L^\infty(\mathcal{E}_h)}
\| \chi \|_{L^1(\mathcal{E}_h)}
\\&\lesssim
\| d \|_{L^\infty(\Gamma_h)}
\| \lJ \hEnK \rJ \|_{L^\infty(\mathcal{E}_h)}
\left(
h^{-1}
\| \chi \|_{L^1(\Gamma_h)}
+
\| \chi \otimes \lhgrad \|_{L^1(\Gamma_h)}
\right)
\\&\lesssim h^2
\left(
\| \chi \|_{L^1(\Gamma_h)}
+
h
\| \chi \otimes \lhgrad \|_{L^1(\Gamma_h)}
\right)
\\&\lesssim
h^2 \| \chi \|_{W_1^1(\Gamma_h)}
\end{align}
where we use Hölder's inequality, a trace inequality, the bound \eqref{dest} for $d$
and the following estimate for the jump in the conormal
$\| \lJ \hEnK \rJ \|_{L^\infty(\mathcal{E}_h)}
\leq \| \hEnK^+ - \bEnK^+ \|_{L^\infty(\mathcal{E}_h)}
+ \| \hEnK^- - \bEnK^- \|_{L^\infty(\mathcal{E}_h)} \lesssim h$.
\end{proof}

\begin{remark}
The foundation of this proof is independent of the order of geometry approximation. By using bounds on $d$ for higher order geometry approximations (see eg. \cite{Demlow2009}) we yield
an estimate on the form
\begin{align}
\left| \int_{\Gamma_h^k} (\hP \cdot \bn) \cdot \chi \, ds \right| \lesssim  h^{1+k} \| \chi \|_{W_1^1(\Gamma_h^k)}
\end{align}
where $k$ is the polynomial order of the continuous piecewise polynomial geometry approximation $\Gamma_h^k$.
\end{remark}

\subsection{Sobolev norm approximation} \label{section:sobolev}

As $\Gamma_h$ is piecewise smooth we on each triangle face define tensors of tangential derivatives analogously to \eqref{eq:Dk}, i.e.
\begin{align}
D^k_{\Gamma_h} w :=
\left\{
\begin{alignedat}{2}
& w
\qquad &&\text{for $k = 0$}
\\
&\left\lproj \left( D^{k-1}_{\Gamma_h} w \right) \otimes \leftgrad \right\rhproj
\qquad &&\text{for $k \geq 1$}
\end{alignedat}\right.
\label{eq:Dhk}
\end{align}
and we denote the approximate surface differential operators by subscript $\Gamma_h$, for example $\hgrad$
and $\hlaplace$.

As a consequence of $\Gamma_h$ being only piecewise smooth the natural Sobolev spaces
on $\Gamma_h$ are broken, which we indicate on Sobolev spaces by subscript $h$, and
we introduce the following semi-norms for the broken Sobolev spaces
on the approximate and exact surfaces
\begin{align}
\left| v \right|_{H_h^{k}(\Gamma_h)}
:=
\left(
\sum_{K \in\mathcal{K}}
| v |_{H^k(\hK)}^2
\right)^{\frac{1}{2}}
\qquad\quad
\left| v \right|_{H_h^{k}(\Gamma)}
:=
\left(
\sum_{K \in\mathcal{K}}
| v |_{H^k(\bK)}^2
\right)^{\frac{1}{2}}
\end{align}

To compare functions in these norms on the exact and approximate surfaces we now present the following results from \cite{Dziuk1988,Demlow2009},
which we extend with estimate \eqref{aa4}
and
prove in Appendix \ref{sobolevproof}.
\begin{lemma}[Sobolev norm equivalence] \label{sobolevestimate}
Let $v \in \{ w^\ell : w \in H^k_h(\Gamma) \cap C^0(\Gamma)\}$ or $v \in \{ w^\ell : w\in H^k_h(\Gamma_h) \cap C^0(\Gamma_h)\}$ for an integer $k \geq 1$
and mesh size parameter $0<h<h_0$. For sufficiently small $h_0$ the following inequalities hold
\begin{alignat}{2}
\| v \|_{L^2(\Gamma)} & \lesssim & \ \| v \|_{L^2(\Gamma_h)} &\lesssim \| v \|_{L^2(\Gamma)} \label{aa1}
\\
| v |_{H^1(\Gamma)} & \lesssim & | v |_{H^1(\Gamma_h)} \, &\lesssim | v |_{H^1(\Gamma)} \label{aa2}
\\
& &| v |_{H^k_h(\Gamma_h)} &\lesssim \sum_{m=1}^k | v |_{H^m_h(\Gamma)} \label{aa3}
\\
| v |_{H^k_h(\Gamma)} & \lesssim \mathrlap{\sum_{m=1}^k | v |_{H^m_h(\Gamma_h)}} & & \label{aa4}
\end{alignat}
with constants depending on derivatives of the distance function $d$.
\end{lemma}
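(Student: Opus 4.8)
The plan is to reduce each of the four inequalities to a pointwise algebraic relationship between the tangential derivative tensors $D^k_\Gamma v$ and $D^k_{\Gamma_h} v$ induced by the closest point map $\cpm$, and then integrate using the measure equivalence. The one computation underlying everything is the first-order chain rule for the normal extension. Since $v^\ell = v\circ\cpm$ is constant in the $\bn$-direction, and since $\rgrad\cpm = \bP - dH$ is symmetric (because $\rgrad d = \bn$ and $\rgrad\bn = H$), we have $\rgrad v^\ell = (\bP - dH)\,\bgrad v$ evaluated at $\cpm(x)$; this is itself tangential to $\Gamma$ because $H$ maps the tangent plane to itself. Composing with $\hgrad = \hP\rgrad$ on each flat face gives the identity $\hgrad v = \hP(\bP - dH)\bgrad v$, which I will use repeatedly.

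For \eqref{aa1} I would use that $v = v\circ\cpm$ on $\Gamma_h$, so the change of variables $\mu_h\,ds_h = ds\circ\cpm$ from \eqref{integralmeasure2} gives $\|v\|_{L^2(\Gamma)}^2 = \int_{\Gamma_h} |v|^2 \mu_h\,ds_h$; the bound \eqref{muest} shows $\mu_h$ is pinched between positive constants for $h$ small, which yields the two-sided estimate at once. For \eqref{aa2} I expand the chain rule as $\hgrad v = \bgrad v - (\hn\cdot\bgrad v)\hn - d\,\hP H\bgrad v$ and observe that $\hn\cdot\bgrad v = (\hn-\bn)\cdot\bgrad v$ is $\mathcal{O}(h)$ by \eqref{nest} (since $\bn\cdot\bgrad v = 0$) while $d = \mathcal{O}(h^2)$ by \eqref{dest}. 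Thus $\hP(\bP - dH)$ acts as $I+\mathcal{O}(h)$ from $T\Gamma$ to $T\Gamma_h$, hence is invertible for $h$ small; combining with \eqref{aa1} applied to first derivatives gives both directions.

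The heart of the lemma, \eqref{aa3} and \eqref{aa4}, I would prove by induction on $k$, aiming at a pointwise identity of the schematic form $D^k_{\Gamma_h} v = \sum_{m=1}^{k} A^{(k)}_m : D^m_\Gamma v$, where each coefficient $A^{(k)}_m$ is assembled from $\bP$, $\hP$, $H$, the factor $d$, and their tangential derivatives. The induction step applies $D_{\Gamma_h}$ (defined in \eqref{eq:Dhk}) to the hypothesis: by the product rule one either differentiates a coefficient $A^{(k)}_m$, producing a new bounded coefficient since every tangential derivative of $\bP$, $\hP$, $H$, or $d$ is controlled by Cartesian derivatives of $d$ through \eqref{dcrit} with $k=3$, or one differentiates $D^m_\Gamma v$, which by the same first-order chain rule converts $\hgrad$ acting on a $\Gamma$-derivative into $D^{m+1}_\Gamma v$ plus lower-order terms. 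The leading coefficient $A^{(k)}_k$ is an iterated tensor product of $\hP(\bP - dH)$, hence invertible for $h$ small, while every other coefficient carries at least one $\mathcal{O}(h)$ factor. Taking the $L^2(\Gamma_h)$ norm, using boundedness of the coefficients, and passing to $\Gamma$ via \eqref{aa1} gives \eqref{aa3}. For \eqref{aa4} I invert the leading coefficient, writing $D^k_\Gamma v = (A^{(k)}_k)^{-1}:\bigl(D^k_{\Gamma_h} v - \sum_{m<k} A^{(k)}_m : D^m_\Gamma v\bigr)$, and close by a downward induction so that the residual terms $D^m_\Gamma v$ with $m<k$ are already controlled by $\sum_{j\le m}|v|_{H^j_h(\Gamma_h)}$.

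The main obstacle I anticipate is the bookkeeping of the two differing tangential projections $\lproj\,\cdot\,\rbproj$ and $\lproj\,\cdot\,\rhproj$ in the recursions \eqref{eq:Dk} and \eqref{eq:Dhk}: at each inductive step one must verify that these projections can be interchanged at the cost of only bounded, indeed $\mathcal{O}(h)$, error tensors, so that the spurious out-of-plane components the projections are designed to remove do not accumulate across the $k$ differentiations. Showing that every tangential derivative of the geometric coefficients stays bounded under \eqref{dcrit}, and tracking exactly which factors supply the smallness needed for the two-sided comparison, is the technically delicate part; the appearance of the full sum $\sum_{m=1}^k$ on the right-hand sides of \eqref{aa3} and \eqref{aa4} is precisely the signature of the lower-order curvature terms generated in this process, and is what distinguishes the higher-order case from the first-order estimate in \cite{Dziuk1988}.
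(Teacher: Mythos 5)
Your proposal is correct and follows essentially the same route as the paper's proof in Appendix~\ref{sobolevproof}: the chain-rule identity $\hgrad v = \hP(\bP - dH)\bgrad v$ (the paper's tensor $B$ and $\widehat{B} = \hP(\bP-dH)$), the measure equivalence via $\mu_h$ for \eqref{aa1}, the linear expansion of $D^k_{\Gamma_h} v$ in the exact tangential derivatives $(D^m_\Gamma v)^\ell$ with bounded geometric coefficients for \eqref{aa3}, and for \eqref{aa4} the observation that the leading coefficient is an $\mathcal{O}(h)$ perturbation of the identity on tangential tensors, closed by induction on $k$. The only cosmetic differences are that the paper obtains \eqref{aa2} as the $k=1$ case of \eqref{aa3}--\eqref{aa4} rather than directly, and replaces your explicit inversion of the leading coefficient by an equivalent kick-back argument.
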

\begin{proof}
We prove this lemma in Appendix \ref{sobolevproof}.
\end{proof}
\begin{remark}
A consequence of this lemma is that if $v\in H^k(\Gamma)$ then ${v^\ell}|_{\Gamma_h} \in H^k_h(\Gamma_h)$,
and likewise if $v_h \in H^k_h(\Gamma_h)$ then ${v_h^\ell}|_\Gamma \in H^k_h(\Gamma)$.
\end{remark}

\section{The continuous/discontinuous Galerkin method} \label{section:cdG}
In this section we present the biharmonic problem on a surface $\Gamma$ and derive a weak formulation of the problem
suitable for our purposes. To deal with the $H^2(\Gamma)$ conformity requirement of the biharmonic problem
we present a continuous/discontinuous Galerkin (c/dG) method \cite{Engel2002}. The method is formulated on the approximate surface $\Gamma_h$
with surface differential operators also based on $\Gamma_h$.

\subsection{Notation}
For elements $f,g$ in an inner product space with inner product $\left\langle \cdot,\cdot \right\rangle$ we
let $(f,g)_\Omega$ denote integration over $\Omega$ such that
$(f,g)_\Omega := \int_{\Omega} \left\langle f,g \right\rangle \, ds$.
On an edge $E=\partial K_+ \cap \partial K_-$ we define the jump and average by
\begin{align}
\lJ g \rJ &:= \lim_{c\rightarrow 0+} \left(g(\bfx-c\hEnK^+) - g(\bfx-c\hEnK^-)\right)
\\
\lA g \rA &:= \lim_{c\rightarrow 0+} \frac{1}{2}
\left( g(\bfx-c\hEnK^+) + g(\bfx-c\hEnK^-) \right)
\end{align}
respectively,
and on the exact surface we define the jump and average over $E^\ell$ analogously using the conormals $\bEnK^\nf$.

\subsection{The biharmonic problem}
We consider the following model problem: Given $f\in L^2(\Gamma)$ with $(f,1)_\Gamma=0$, find $u$ such that
\begin{alignat}{2}
\blaplace^2 u &= f  &\qquad&\text{on $\Gamma$} \label{biharmonic} \\
(u,1)_\Gamma &= 0 \label{averagesolution}
\end{alignat}
where $\blaplace^2 u:=\blaplace(\blaplace u)$.
As we consider surfaces $\Gamma$ without boundary, i.e. closed manifolds, we include the criterion \eqref{averagesolution} to make the problem well posed. This is more easily seen for \eqref{biharmonic} in weak form: Given $f\in L^2(\Gamma)$, find $u\in H^2(\Gamma)$ such that
\begin{alignat}{2}
(\blaplace u, \blaplace v)_\Gamma &= (f,v)_\Gamma  &\qquad&\text{for all $v \in H^2(\Gamma)$}
\label{eq:confbiharmweak}
\end{alignat}
The nullspace of $\blaplace$ on a closed manifold is the space of constant functions,
and then by \eqref{averagesolution} the only function in the nullspace is the zero function.

For smooth surfaces $\Gamma$ without boundary we have the following elliptic regularity estimate
\begin{align}
\| u \|_{H^4(\Gamma)} \lesssim \| f \|_{L^2(\Gamma)} \label{eq:stabest}
\end{align}
under the restriction of \eqref{averagesolution}, see \cite[Th. 27]{besse1987}.

\subsection{Green's formula on curved surfaces}
Consider a smooth surface $\Sigma$ with piecewise smooth boundary $\partial\Sigma$ and surface normal $\bn$.
For functions $v :\mathbb{R}^3 \rightarrow \mathbb{R}^3$ and $w:\mathbb{R}^3 \rightarrow \mathbb{R}$
the Green's formula on $\Sigma$ reads
$
\left( \rgrad \cdot v , w \right)_{\Sigma} = \left( n_{\partial\Sigma} \cdot v , w \right)_{\partial\Sigma} - \left( v , \rgrad w \right)_{\Sigma}
$
where $n_{\partial\Sigma}$ is the outward pointing normal to $\partial\Sigma$. Using the definition of the tangential gradient we may instead write a Green's formula with tangential operators
\begin{align}
\left( \nabla_\Sigma \cdot v , w \right)_{\Sigma} = \left( n_{\partial\Sigma} \cdot v , w \right)_{\partial\Sigma}
- \left( v , \nabla_\Sigma w \right)_{\Sigma}
+ \left( \trace(H)\bn \cdot v , w \right)_{\Sigma}
\label{eq:greens}
\end{align}
where we note that we get an additional term which includes the mean curvature of the surface.
In the next section we will however notice that for the weak formulation of the biharmonic problem on a curved surface all curvature terms vanish as the vector $v$ will always be a tangent vector and thus $\bn\cdot v = 0$.

\subsection{Broken weak formulation} \label{section:weak}
The requirement on a conformal method based on \eqref{eq:confbiharmweak} in practice means
defining an approximation space which is $C^1(\Gamma)$. Due to the
intricacies involved of defining such approximation spaces we instead aim for a
continuous/discontinuous Galerkin method where the approximation space rather is in the
broken space $H^4_h(\Gamma) \cap C^0(\Gamma)$.
Multiplying the biharmonic equation on a curved surface \eqref{biharmonic} by $v \in H_h^4(\Gamma)\cap C^0(\Gamma)$,
integrating over $\Gamma$ and applying Green's formula two times gives
\begin{align}
(f,v)_{\Gamma}
&= \left( \blaplace^2 u , v \right)_{\Gamma}
=
- \left( \bgrad \blaplace u , \bgrad v \right)_{\Gamma}
=
- \sum_{K\in\mathcal{K}}\left( \bgrad \blaplace u , \bgrad v \right)_{\bK}
\\&=
\sum_{K\in\mathcal{K}}
\big(
\left( \blaplace u , \blaplace v \right)_{\bK}
-
\left(  \blaplace u  , \bEnK \cdot \bgrad v \right)_{\partial \bK}
\big)
\end{align}
where $\bEnK$ is the conormal to $\bK$, i.e. the outward pointing normal to ${\partial \bK}$,
and the curvature terms in the Green's formula \eqref{eq:greens} vanish as $\bn\cdot\bgrad = 0$.
Introducing the notation $\bEn = \lJ \bEnK \rJ /2 = \bEnK^+$ and summing over $\mathcal{K}$ we get the weak formulation: Find $u \in H^4(\Gamma)$ satisfying $(u,1)_\Gamma = 0$ such that
\begin{align} \label{confsumweak}
(f,v)_{\Gamma}
&=
\sum_{K \in \mathcal{K}} (\blaplace u, \blaplace v )_\bK
-
\sum_{E \in \mathcal{E}}
\left(\blaplace u , \bEn \cdot \lJ \bgrad v \rJ \right)_\bE
\end{align}
for all $v\in H_h^4(\Gamma) \cap C^0(\Gamma)$.
For a function $u \in H^4_h(\Gamma) \cap C^0(\Gamma)$ the term $\blaplace u$ is undefined on interior edges $\bE$ and we therefore extend \eqref{confsumweak} by defining
\begin{alignat}{2}
\blaplace u &:= \lA \blaplace u \rA - \beta h^{-1} \bEn \cdot \lJ\bgrad u \rJ 
\qquad &&\text{on $\bE$}
\end{alignat}
where $\beta$ is a positive parameter needed to achieve stability for the method, see Lemma~\ref{lemma:basics}.
To make the bilinear form symmetric we also add the term
\begin{align}
-
\sum_{E \in \mathcal{E}} \left(\bEn \cdot \lJ \bgrad u \rJ , \lA \blaplace v \rA \right)_\bE
\end{align}
Note that the above modifications does not affect the consistency of the method as $\lA \blaplace u \rA = \blaplace u$ and $\bEn \cdot \lJ \bgrad u \rJ =0$ on $\bE$ for $u\in H^4(\Gamma)$
due to the Sobolev embedding $H^4(\Gamma) \hookrightarrow C^2(\Gamma)$, see \cite[Thm. 2.20]{aubin1982}.
To allow for a more abstract presentation we let the bilinear form $a(\cdot,\cdot)$ be given by
\begin{multline}
\adg(u,v) := 
\sum_{K \in \mathcal{K}} (\blaplace u, \blaplace v )_\bK
\\-
\sum_{E \in \mathcal{E}} \Big( \left(\left\lA \blaplace u \right\rA, \bEn \cdot \lJ \bgrad v \rJ )_\bE
+ ( \bEn \cdot \lJ \bgrad u \rJ , \left\lA \blaplace v \right\rA \right)_\bE \Big) \\
+ \sum_{E \in \mathcal{E}} \beta \left( h^{-1} \bEn \cdot \lJ \bgrad u \rJ, \bEn \cdot \lJ \bgrad v \rJ \right)_\bE
  \label{thea}
\end{multline}
and linear functional $l(\cdot)$ be given by
\begin{align}
l(v):=(f,v)_\Gamma
\end{align}
We now introduce the following function spaces extended to $U$ using \eqref{eq:extension1}
\begin{align}
V
:=
\left\{ \ w^\ell \ : \ w \in
H^4(\Gamma)
\ \right\}
\qquad\quad
W
:=
\left\{ \ w^\ell \ : \ w \in
H_h^4(\Gamma)\cap C^0(\Gamma)
\ \right\}
\end{align}
where we note that $V \subset W$, and that $a(u,v)$ and $l(v)$ clearly are defined for $u,v \in W$.
The weak formulation of our continuous problem thus reads: Find $u\in V$ satisfying $(u,1)_\Gamma=0$ such that
\begin{align}
\adg(u,v)=l(v)  \qquad \text{for all $v\in W$} \label{extendedweak}
\end{align}

Both $a(\cdot,\cdot)$ and $l(\cdot)$ are formulated using the exact surface and the exact differential
operators, i.e. using information that in practice may be unavailable.
It would thus be impractical to directly formulate our method based on \eqref{extendedweak}
and we therefore in the next section formulate our method using approximations to $a(\cdot,\cdot)$ and $l(\cdot)$ based on $\Gamma_h$.

\subsection{The continuous/discontinuous Galerkin method}
On each facet edge $E$ we define an approximation to the element boundary normal $\bEn$ by
\begin{align} \label{htE}
\hEn := \frac{\hEnK^+ - \hEnK^-}{ 1 - \hEnK^+ \cdot \hEnK^-}
\end{align}
and note that this definition
has the property
\begin{align} \label{eq:tproperty}
\hEn \cdot \lJ\hgrad w\rJ = \hEnK^+ \cdot \hgrad w^+
+ \hEnK^- \cdot \hgrad w^-
\end{align}
which will simplify the analysis.
In \cite{Dedner2013} numerical experiments using variations for the definition of $\hEn$ in
a dG method for the Laplace--Beltrami problem yield the conclusion that \eqref{htE} is preferred.

By simply replacing the various terms by its discrete analogs the resulting approximate bilinear form $a_h(\cdot ,\cdot )$ on the discrete surface $\Gamma_h$ reads
\begin{multline}
a_h(u_h,v) := 
\sum_{K \in \mathcal{K}} ( \hlaplace u_h , \hlaplace v )_\hK
\\-
\sum_{E \in \mathcal{E}} \bigg( \left(\left\lA \hlaplace u_h \right\rA, \hEn \cdot \lJ \hgrad v \rJ \right)_E
+ \left( \hEn \cdot \lJ \hgrad u_h \rJ, \left\lA \hlaplace v \right\rA \right)_\hE \bigg)
\\
+ \sum_{E \in \mathcal{E}}
\beta \left( h^{-1} \hEn \cdot \lJ \hgrad u_h \rJ, \hEn \cdot \lJ\hgrad v\rJ \right)_\hE
  \label{theah}
\end{multline}
and the approximate linear functional $l_h(\cdot)$ is given by
\begin{align}
l_h(v) := (f_h,v)_{\Gamma_h}
\end{align}
where we as in \cite{Dziuk1988} define
\begin{align}
f_h := f - |\Gamma_h|^{-1}(f,1)_{\Gamma_h} \label{eq:fh}
\end{align}
and note that $(f_h,1)_{\Gamma_h}=0$. Clearly, both $a_h(u,v)$ and $l_h(v)$ are defined for functions $u,v \in W$.

We choose our finite element space $W_h$ as the space of continuous, piecewise quadratic polynomials on 
the approximate surface $\Gamma_h$, which we after extending to $U$ via \eqref{eq:extension2} express as
\begin{align}
W_h
:=
\left\{ \
w^\ell \in C^0(U) \ : \ w \in
\bigoplus_{K \in \mathcal{K}} \mathcal{P}_2(K)
\ \right\}
\end{align}
and note that $W_h \subset W$ by Lemma~\ref{sobolevestimate}, but $W_h \not\subset V$.
We now formulate our continuous/discontinuous Galerkin method: Find $u_h \in W_h$ such that
\begin{align}
a_h(u_h,v)&=l_h(v) \qquad \text{for all $v\in W_h$} \label{approxweak}
\\
(u_h,1)_{\Gamma_h} &= 0 \label{approxcrit}
\end{align}

In the next section we turn to the theoretical analysis of this method.
\section{A priori error estimates} \label{section:apriori}

We will now prove error estimates for our method in energy and $L^2$ norms using
assumptions and approximation properties presented and proved in \S\ref{section:geom}.
We begin by defining the norms we will work with in the next section. Then we
establish a number of preliminary lemmas in \S\ref{section:preliminaries} which we use
in the proofs of our main theorems in \S\ref{section:apriorierrorest}.

\subsection{Energy norms and definitions} \label{section:energynorm}
We equip $W$ with the following discrete energy norm.
\begin{align} \label{hnorm}
\tnorm{w}^2
:=
\sum_{K \in \mathcal{K}}
\| \hlaplace w \|_{L^2(\hK)}^2
+
h \| \left\lA \hlaplace w \right\rA \|_{L^2(\partial \hK)}^2
+
h^{-1} \| \hEn \cdot \lJ\hgrad w \rJ \|_{L^2(\partial \hK)}^2
\end{align}
Note that $\tnorm{\cdot}$ is indeed a norm on $W$ since if $\tnorm{w}=0$ then $w$ must
be the solution to the problem
\begin{alignat}{2}
-\hlaplace w &= 0  \quad &&\text{on $\mathcal{K}_h$} \label{eq:nrmprb1}
\\
\hEn \cdot \lJ \hgrad w \rJ &= 0  \quad &&\text{on $\mathcal{E}_h$}  \label{eq:nrmprb2}
\end{alignat}
Weakly formulating \eqref{eq:nrmprb1} and choosing $w$ as test function gives
\begin{align}
\sum_{K\in\mathcal{K}}(-\hlaplace w,w)_K
&=
\sum_{K\in\mathcal{K}}(\hgrad w,\hgrad w)_K
- (\hEnK \cdot \hgrad w, w)_{\partial K}
\\
&=
\sum_{K\in\mathcal{K}}(\hgrad w,\hgrad w)_K
- \sum_{E\in\mathcal{E}} (\hEn \cdot \lJ \hgrad w \rJ ,w)_{E} = 0
\end{align}
where we use \eqref{eq:tproperty} in the second equality.
By \eqref{eq:nrmprb2} we then have $\|\hgrad w\|_{L^2(\mathcal{K})}^2=0$, and thus $w$ must be a constant function over $\Gamma_h$ and through the extension to $U$ also be constant in $U$. Due to the criteria \eqref{averagesolution} and \eqref{approxcrit}, $w$ must then be the zero function.

Further, we will also need the following energy norm corresponding to \eqref{hnorm} albeit with exact differential operators
and integration over the exact surface
\begin{align} \label{bnorm}
\bnorm{w}^2
:=
\sum_{K \in \mathcal{K}}
\| \blaplace w \|_{L^2(\bK)}^2
+
h \| \left\lA \blaplace w \right\rA \|_{L^2(\partial \bK)}^2
+
h^{-1} \| \bEn \cdot \lJ\bgrad w \rJ \|_{L^2(\partial \bK)}^2
\end{align}
By arguments analogous to the above
$\bnorm{\cdot}$ is also a norm on $W$.
We will later prove that these two norms are actually equivalent
for functions in $W_h$ (see Lemma~\ref{lemma:normequiv}).

As a technical tool in the proof we will also use the following norm
\begin{align}
\| v \|_{H_h^*(\Gamma)}
:=
\bnorm{v}
+
| v |_{H^1(\Gamma)}
+
h | v |_{H_h^2(\Gamma)}
+
h^2 | v |_{H_h^3(\Gamma)}
\label{technicalnorm}
\end{align}



\begin{definition} \label{def:interpolant}
Let $\pi_2$ be the standard continuous piecewise quadratic Lagrange interpolation operator on $\Gamma_h$.
The interpolation operator $\pi:W \rightarrow W_h$ is given by
\begin{align}
\pi w := (\pi_2 w)^\ell
\end{align}
\end{definition}
\begin{remark}
This interpolation can be viewed as defining the nodal values on $\Gamma_h$ by fetching
values on $\Gamma$ by the closest point mapping $\cpm(\bfx)$.
\end{remark}

\begin{definition} \label{def:P0proj}
For any domain $\Sigma$ let $\mathcal{P}_0^\Sigma : L^1(\Sigma) \rightarrow \mathbb{R}$ be the projection onto
the space of constants such that
$
\mathcal{P}_0^\Sigma w = \left| \Sigma \right|^{-1} (w,1)_\Sigma
$
for $w \in L^1(\Sigma)$.
\end{definition}
\begin{remark}
This projection gives the average over the domain and note that it may be used for defining the quotient space
$\| w \|_{L^2(\Sigma)/\mathbb{R}} := \| w - \mathcal{P}_0^\Sigma w \|_{L^2(\Sigma)}$.
\end{remark}


In the next section we establish a number of lemmas needed for the proofs of the main a priori error estimates in Section \ref{section:apriorierrorest}.
\subsection{Prerequisite lemmas} \label{section:preliminaries}

\begin{lemma}[Trace inequalities]
For $v \in W$ the following trace inequalities hold
\begin{align}
\| v \|^2_{L^2(\partial K)} &\lesssim h^{-1} \| v \|^2_{L^2(K)} + h | v |^2_{H^1(K)} 
\label{traceineq}
\\
\| v \|^2_{L^2(\partial \bK)} &\lesssim h^{-1} \| v \|^2_{L^2(\bK)} + h | v |^2_{H^1(\bK)}
\label{btraceineq}
\end{align}
\end{lemma}
\begin{proof}
\proofterm{Inequality (\ref{traceineq}):}
This well known trace inequality follows by affinely mapping $K$ to a reference element $K_{\text{ref}}$, applying the
trace inequality $\|v\|_{L^2(\partial K_{\text{ref}})}^2 \linebreak \lesssim \|v\|_{L^2(K_{\text{ref}})} \|v\|_{H^1(K_{\text{ref}})}  \lesssim \|v\|_{H^1(K_{\text{ref}})}^2$
(see \cite{brennerscott}), and mapping back to $K$.

\proofterm{Inequality (\ref{btraceineq}):}
Due to the extension in $W$, clearly $\| v \|_{L^2(\partial \bK)} \lesssim \| v \|_{L^2(\partial \hK)}$. Applying the original trace inequality \eqref{traceineq} and Lemma~\ref{sobolevestimate}, the trace inequality on curved elements \eqref{btraceineq} immediately follows. 
\end{proof}

Throughout the various parts of the analysis we will make frequent use of the following lemma which gives control over discrete functions $w_h$ in $H^1(\Gamma)$ norm using a duality argument.
\begin{lemma} \label{h1lemma}
For all $w_h \in W_h$ it holds
\begin{align}
\| w_h \|_{L^2(\Gamma) / \mathbb{R}}
+ | w_h |_{H^1(\Gamma)}
 \lesssim \bnorm{w_h}
\end{align}
\end{lemma}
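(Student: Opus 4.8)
The goal is to bound the $L^2(\Gamma)/\mathbb{R}$ and $H^1(\Gamma)$ norms of a discrete function $w_h$ by its energy norm $\bnorm{w_h}$. The plan is to prove the two pieces separately via duality arguments, since the energy norm directly controls only broken second-order quantities and the jump/average edge terms, not lower-order norms.

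First I would handle the $H^1$-seminorm. The idea is to introduce an auxiliary (continuous) dual problem whose right-hand side is chosen to extract $|w_h|_{H^1(\Gamma)}$. Concretely, let $z \in V$ solve a Laplace--Beltrami-type problem, e.g. $-\blaplace z = -\blaplace^\ell w_h$ in a suitable weak sense (with the zero-average normalization so the problem is well posed on the closed surface $\Gamma$), so that $\| z \|_{H^2(\Gamma)} \lesssim | w_h |_{H^1(\Gamma)}$ by elliptic regularity for the second-order problem. Testing the weak form against $w_h$ and integrating by parts elementwise produces, on the one hand, the quantity $|w_h|_{H^1}^2$, and on the other hand a sum of element-interior terms involving $\blaplace w_h$ paired against $z$, plus edge terms involving $\bEn \cdot \lJ \bgrad w_h \rJ$ paired against traces of $z$ (the jumps of $z$ vanish since $z \in H^2(\Gamma)$ is single-valued). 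I would then bound the interior term by $\| \blaplace w_h \|_{L^2(\bK)} \| z \|_{L^2}$ and the edge term by $h^{-1/2}\| \bEn\cdot\lJ\bgrad w_h\rJ\|_{L^2(\partial\bK)}$ times an $h^{1/2}$-weighted trace of $\bgrad z$, using a trace inequality of the form $h^{1/2}\| \bgrad z \|_{L^2(\partial\bK)} \lesssim \| z \|_{H^2(\bK)}$. Each factor coming from $w_h$ is exactly a term in $\bnorm{w_h}$, and the factors coming from $z$ are controlled by $\| z \|_{H^2(\Gamma)} \lesssim |w_h|_{H^1(\Gamma)}$; dividing through by $|w_h|_{H^1(\Gamma)}$ (a kick-back argument) yields $|w_h|_{H^1(\Gamma)} \lesssim \bnorm{w_h}$.

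Next, for the $L^2(\Gamma)/\mathbb{R}$ bound I would run the analogous duality argument one order higher, choosing the dual datum to be $\psi = w_h - \mathcal{P}_0^\Gamma w_h$ and letting $\phi \in V$ solve the biharmonic dual problem $a(v,\phi) = (v,\psi)_\Gamma$ exactly as in the setup \eqref{dualprob2}, with stability $\| \phi \|_{H^4(\Gamma)} \lesssim \| \psi \|_{L^2(\Gamma)} = \| w_h \|_{L^2(\Gamma)/\mathbb{R}}$. Taking $v = \psi = w_h - \mathcal{P}_0^\Gamma w_h$ gives $\| w_h \|_{L^2(\Gamma)/\mathbb{R}}^2 = a(w_h - \mathcal{P}_0^\Gamma w_h, \phi)$, and since $a(\cdot,\cdot)$ annihilates constants in its first argument (all terms involve $\blaplace$ or $\bgrad$ of the first argument), this equals $a(w_h,\phi)$. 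I would then estimate $a(w_h,\phi)$ by Cauchy--Schwarz on each of the three groups of terms in \eqref{thea}: the bulk term gives $\sum_K \| \blaplace w_h\|_{L^2(\bK)}\|\blaplace\phi\|_{L^2(\bK)}$, and the two edge groups are handled with the same trace-inequality bookkeeping as above, so that $a(w_h,\phi) \lesssim \bnorm{w_h}\, \| \phi \|_{H^4(\Gamma)}$ (the continuity estimate for $a(\cdot,\cdot)$ already recorded as \eqref{bcont} in the paper). Combining with the stability bound and cancelling one factor of $\| w_h\|_{L^2(\Gamma)/\mathbb{R}}$ gives $\| w_h \|_{L^2(\Gamma)/\mathbb{R}} \lesssim \bnorm{w_h}$.

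The main obstacle I anticipate is the careful handling of the edge terms in the integration-by-parts step, specifically ensuring that the auxiliary solutions $z$ and $\phi$ have enough regularity (from $H^2$ and $H^4$ elliptic regularity, respectively) that their traces and normal-derivative traces on the skeleton $\mathcal{E}_\Gamma$ are controlled by the correct powers of $h$, and that the jump terms of the \emph{smooth} dual solution genuinely vanish so only the $w_h$-jumps survive to be absorbed into $\bnorm{w_h}$. The continuity of $a(\cdot,\cdot)$ and the trace inequalities this relies on are presumably established in the prerequisite lemmas (e.g.\ the continuity bound referenced as Lemma~\ref{lemma:basics}), so the argument reduces to assembling these pieces and applying the kick-back; the conceptual content is simply that both duality problems trade the low-order norm of $w_h$ for the high-order norm of a smooth solution, which the energy norm is built to control.
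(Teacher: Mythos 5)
Your second step (the $L^2(\Gamma)/\mathbb{R}$ bound via the biharmonic dual problem \eqref{dualprob2}, continuity \eqref{bcont}, the bound $\bnorm{\phi}\lesssim\|\phi\|_{H^3(\Gamma)}$ from \eqref{technormV}, and stability \eqref{stabest2}) is correct and non-circular, since none of those ingredients depends on the present lemma. The genuine gap is in your first step, the $H^1$-seminorm bound. The auxiliary problem $-\blaplace z=-\blaplace w_h$ ``in a suitable weak sense'' cannot deliver a dual solution with the properties you assert: since $w_h\in W_h$ is continuous and piecewise smooth, the function $w_h-\mathcal{P}_0^\Gamma w_h$ itself solves that weak problem, so by uniqueness (up to constants) $z=w_h-\mathcal{P}_0^\Gamma w_h$. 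In general $z\notin H^2(\Gamma)$ --- its tangential gradient jumps across the edges $\bE$, which is precisely what the method penalizes --- so the assertion $z\in V$ fails, and the claim $\|z\|_{H^2(\Gamma)}\lesssim|w_h|_{H^1(\Gamma)}$ ``by elliptic regularity'' is unfounded: elliptic regularity requires an $L^2(\Gamma)$ datum, whereas the distributional Laplacian of $w_h$ contains edge contributions proportional to $\bEn\cdot\lJ\bgrad w_h\rJ$ and lies only in the dual of $H^1(\Gamma)$. Replacing the datum by the broken (elementwise) Laplacian would restore $H^2$ regularity, but then elliptic regularity gives $\|z\|_{H^2(\Gamma)}\lesssim\bnorm{w_h}$, not $\lesssim|w_h|_{H^1(\Gamma)}$, and testing that problem with $w_h$ no longer produces $|w_h|^2_{H^1(\Gamma)}$. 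There is also an internal inconsistency: your integration by parts pairs $\bEn\cdot\lJ\bgrad w_h\rJ$ with traces of $z$, not of $\bgrad z$, so the trace inequality actually needed is $h^{1/2}\|z\|_{L^2(\partial\bK)}\lesssim\|z\|_{H^1(\bK)}$ (cf.\ \eqref{btraceineq}), never the $\bgrad z$ version requiring $H^2$ control.

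This inconsistency is what makes the argument repairable: your computation only ever needs $\|z\|_{H^1(\Gamma)}$. With $z=w_h-\mathcal{P}_0^\Gamma w_h$ the identity reads
\begin{align*}
|w_h|^2_{H^1(\Gamma)}
=-\sum_{K\in\mathcal{K}}(z,\blaplace w_h)_\bK+\sum_{E\in\mathcal{E}}(z,\bEn\cdot\lJ\bgrad w_h\rJ)_\bE
\lesssim\bnorm{w_h}\,\|z\|_{H^1(\Gamma)}
\end{align*}
so it suffices to bound $\|z\|_{H^1(\Gamma)}\lesssim|w_h|_{H^1(\Gamma)}+\bnorm{w_h}$; this follows either from the Poincar\'e inequality on the closed surface $\Gamma$ (giving $\|z\|_{L^2(\Gamma)}\lesssim|w_h|_{H^1(\Gamma)}$) or from your second step applied first, and then Young's inequality and a kick-back finish the proof. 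The repaired argument is, in substance, the paper's proof: the paper introduces no second dual problem; it sets $\psi=w_h-\mathcal{P}_0^\Gamma w_h$, writes $\|\psi\|^2_{H^1(\Gamma)}=\|\psi\|^2_{L^2(\Gamma)}+\sum_{K}\|\bgrad\psi\|^2_{L^2(\bK)}$, treats the $L^2$ part with the biharmonic dual problem (two integrations by parts) and the seminorm part by elementwise integration by parts against $\psi$ itself, bounds both by $\bnorm{\psi}\bigl(\|\blaplace\phi\|_{H^1(\Gamma)}+\|\psi\|_{H^1(\Gamma)}\bigr)$, and closes with the stability estimate and a single kick-back in the full $\|\psi\|_{H^1(\Gamma)}$ norm, thereby never needing a lower-order bound in isolation.
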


\begin{proof}
We introduce the dual problem
$\blaplace^2 \phi = \psi$ with $(\phi,1)_\Gamma=0$,
where $\psi \in L^2(\Gamma)$ with $(\psi,1)_\Gamma=0$, and for which the stability estimate \eqref{eq:stabest} holds, i.e.
\begin{align}
\| \phi \|_{H^4(\Gamma)} \leq \| \psi \|_{L^2(\Gamma)} \label{stability111}
\end{align}
Integrating by parts twice and then applying Cauchy--Schwarz inequality we get
\begin{align}
\| \psi \|_{H^{1}(\Gamma)}^2
&=
\| \psi \|_{L^2(\Gamma)}^2
+ \sum_{K \in \mathcal{K}}
 \| \bgrad \psi \|_{L^2(\bK)}^2
\\
&=
( \psi , \blaplace^2 \phi )_\Gamma 
+ \sum_{K \in \mathcal{K}}
( \bgrad \psi , \bgrad \psi )_\bK
\\
&=
- ( \bgrad \psi , \bgrad \blaplace \phi )_\Gamma
+ \sum_{K \in \mathcal{K}}
( \bgrad \psi , \bgrad \psi )_\bK
\\
&= \sum_{K \in \mathcal{K}} ( \blaplace \psi , \blaplace \phi )_\bK - \sum_{E \in \mathcal{E}} ( \lJ\bEn \cdot \bgrad \psi\rJ , \blaplace \phi )_\bE 
\\ \nonumber
&\quad - \sum_{K \in \mathcal{K}} ( \psi , \blaplace \psi )_\bK - \sum_{E \in \mathcal{E}} ( \lJ\bEn \cdot \bgrad \psi\rJ , \psi )_\bE
\\
\label{traceref111}
&\lesssim \left( \| \blaplace \psi \|^2_{L^2(\Gamma)} + h^{-1} \| \lJ\bEn \cdot \bgrad \psi\rJ \|^2_{L^2(\mathcal{E}^\ell)} \right)^{1/2}
\\ \nonumber
&\qquad\quad  \cdot
\left(\| \blaplace \phi \|_{H^1(\Gamma)} + \| \psi \|_{H^1(\Gamma)} \right) 
\\
&\lesssim \bnorm{ \psi } \left(\| \blaplace \phi \|_{H^1(\Gamma)} + \| \psi \|_{H^1(\Gamma)} \right)
\end{align}
where we use the trace inequality \eqref{btraceineq}
in \eqref{traceref111}. By \eqref{stability111} we then have
\begin{align}
\left(\| \blaplace \phi \|_{H^1(\Gamma)} + \| \psi \|_{H^1(\Gamma)} \right)
&\lesssim \left(\| \phi \|_{H^3(\Gamma)} + \| \psi \|_{H^1(\Gamma)} \right) \\
&\lesssim \left(\| \psi \|_{L^2(\Gamma)} + \| \psi \|_{H^1(\Gamma)} \right)
\label{seclast111}
\lesssim \| \psi \|_{H^{1}(\Gamma)}
\end{align}
Clearly, for $w_h \in W_h$ we have $w_h|_{\Gamma} \in H^4_h(\Gamma) \cap C^0(\Gamma) \subset L^2(\Gamma)$ and thus we can choose
$\psi = w_h|_\Gamma - \mathcal{P}_0^\Gamma w_h$
where $w_h \in W_h$ which concludes the proof.
\qquad
\end{proof}


\begin{lemma}[Interpolation estimates] \label{lemma:interpolation}
Let $u \in V$ and $\pi: W \rightarrow W_h$ constructed as in Definition \ref{def:interpolant}. The following interpolation estimates then hold
\begin{align}
\tnorm{u - \pi u} &\lesssim h \| u \|_{H^3(\Gamma)} \label{hinterp} \\
\bnorm{u - \pi u} &\lesssim h \| u \|_{H^3(\Gamma)} \label{binterp} \\
\left\|u - \pi u\right\|_{H^*_h(\Gamma)} &\lesssim h \| u \|_{H^3(\Gamma)} \label{xinterp}
\end{align}
for $h < h_0$, with $h_0$ sufficiently small.
\end{lemma}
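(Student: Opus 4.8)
The plan is to prove the three interpolation estimates \eqref{hinterp}, \eqref{binterp}, and \eqref{xinterp} by reducing everything to standard Lagrange interpolation bounds on the flat triangulation $\Gamma_h$, and then transporting those bounds back and forth between $\Gamma$ and $\Gamma_h$ using the Sobolev norm comparison of Lemma~\ref{sobolevestimate}. The central classical input is that for continuous piecewise quadratic Lagrange interpolation $\pi_2$ on a shape-regular, quasi-uniform triangulation of mesh size $h$, one has the elementwise bounds $| w - \pi_2 w |_{H^m(K)} \lesssim h^{3-m} | w |_{H^3(K)}$ for $0 \le m \le 3$. Since $\pi w = (\pi_2 w)^\ell$, the whole argument hinges on expressing each of my three norms in terms of broken $H^m_h$-seminorms and invoking this estimate, summed over $K \in \mathcal{K}$.

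\emph{First I would prove \eqref{hinterp}.} Unwinding the definition \eqref{hnorm} of $\tnorm{\cdot}$, I need to control $\|\hlaplace(u-\pi u)\|_{L^2(\hK)}$, the scaled average term $h^{1/2}\|\{\hlaplace(u-\pi u)\}\|_{L^2(\partial\hK)}$, and the scaled jump term $h^{-1/2}\|\hEn\cdot\lJ\hgrad(u-\pi u)\rJ\|_{L^2(\partial\hK)}$. The volume term is bounded by $|u-\pi u|_{H^2_h(\Gamma_h)} \lesssim h |u|_{H^3_h(\Gamma_h)}$ directly. For the two edge terms I would apply a discrete trace inequality $\|v\|_{L^2(\partial K)}^2 \lesssim h^{-1}\|v\|_{L^2(K)}^2 + h\,|v|_{H^1(K)}^2$ to pull the boundary norms back to element norms; after multiplying by the respective powers of $h$, each edge contribution again reduces to $h^{3-m}|u|_{H^3_h}$-type bounds. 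Throughout, the seminorms are computed with the approximate operators on $\Gamma_h$, so I must convert from $|u|_{H^m_h(\Gamma_h)}$ to $|u|_{H^m(\Gamma)}$ via \eqref{aa4} of Lemma~\ref{sobolevestimate}; since $u\in V$ is as smooth as needed, summing $m$ from $1$ to $3$ gives the clean bound $h\|u\|_{H^3(\Gamma)}$.

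\emph{For \eqref{binterp}} the structure is identical but with exact operators on $\Gamma$, so I would either repeat the trace-and-interpolation argument on the curved elements $\bK$ directly, or — more economically — deduce \eqref{binterp} from \eqref{hinterp} once the norm equivalence $\bnorm{\cdot}\simeq\tnorm{\cdot}$ on $W_h$ (Lemma~\ref{lemma:normequiv}) is available; but since $u-\pi u\notin W_h$, the cleaner route is to prove it intrinsically on $\Gamma$ using \eqref{aa3} to pass curved seminorms of the \emph{lift} $(\pi_2 u)^\ell$ to flat ones and back. \emph{Finally \eqref{xinterp}} follows by estimating each summand of \eqref{technicalnorm} separately: the $\bnorm{u-\pi u}$ piece is exactly \eqref{binterp}; the terms $|u-\pi u|_{H^1(\Gamma)}$, $h\,|u-\pi u|_{H^2_h(\Gamma)}$, and $h^2\,|u-\pi u|_{H^3_h(\Gamma)}$ are each handled by the elementwise Lagrange bound $h^{3-m}|u|_{H^3}$ with $m=1,2,3$, giving $h^2, h^2, h^2$ respectively — all dominated by the leading $h\|u\|_{H^3(\Gamma)}$ from the energy part.

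\emph{The main obstacle} I anticipate is the bookkeeping in the edge terms of \eqref{hinterp}: the interpolation error estimates are naturally stated on the flat triangles in Cartesian Sobolev seminorms, whereas the trace terms involve the \emph{tangential} operators $\hgrad$ and $\hlaplace$ together with the engineered conormal $\hEn$ from \eqref{htE}, whose denominator $1-\hEnK^+\cdot\hEnK^-$ must be shown to be bounded away from zero (this follows from shape regularity). I must verify that $\hEn$ is uniformly bounded so that $\|\hEn\cdot\lJ\hgrad(u-\pi u)\rJ\|$ is controlled by $|u-\pi u|_{H^1}$ on the adjacent elements, and that the power of $h$ bookkeeping in the trace inequality produces precisely $h^{-1/2}$ and $h^{1/2}$ weights matching \eqref{hnorm}. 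Once these scalings are pinned down and the passage between flat and curved seminorms is justified by Lemma~\ref{sobolevestimate}, the three estimates assemble immediately.
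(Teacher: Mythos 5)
Your proposal is correct and follows essentially the same route as the paper: standard piecewise-quadratic Lagrange estimates on the flat elements, the trace inequality \eqref{traceineq} to reduce the edge terms of the energy norm to element terms, Lemma~\ref{sobolevestimate} to transport seminorms between $\Gamma_h$ and $\Gamma$ (the paper packages this as the curved trace inequality \eqref{btraceineq} and the curved interpolation estimates \eqref{curveinterp}), and term-by-term estimation of \eqref{technicalnorm} for \eqref{xinterp}. The only cosmetic differences are that you cite \eqref{aa3} and \eqref{aa4} with their roles swapped (bounding flat seminorms by curved ones is \eqref{aa3}, curved by flat is \eqref{aa4}), and that the paper uses the constancy of $\hlaplace \pi u$ on each facet where you invoke the $m=3$ Lagrange bound; both devices work, and your flagged concern about the uniform boundedness of $\hEn$ is legitimate and indeed holds under the mesh assumptions since $|\hEnK^+\cdot\hEnK^-+1|\lesssim h^2$.
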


\begin{proof}
Throughout this proof we will repeatedly use the following standard interpolation estimate
\begin{align}
|u - \pi u|_{H^k(K)} &\lesssim h^{3-k} | u |_{H^3(K)} \label{standardinterp}
 \qquad \text{for $k=1,2,3$}
\end{align}
with mesh independent constants, cf. \cite{claes}.
On curved elements we will also need the corresponding interpolation estimate
\begin{align}
|u - \pi u|_{H^k(\bK)}
\lesssim \sum_{m=1}^k h^{3-m}
| u |_{H^3(\hK)}
\lesssim h^{3-k}
| u |_{H^3(\hK)} \qquad \text{for $k=1,2,3$}
\label{curveinterp}
\end{align}
which directly follows from Lemma~\ref{sobolevestimate} and \eqref{standardinterp}.

\proofterm{Estimate (\ref{hinterp}):}
Establishing estimates
\begin{align}
\sum_{K \in \mathcal{K}} \| \hlaplace (u - \pi u) \|_{L^2(K)}^2
&\lesssim
h^2 | u |_{H^3(\Gamma_h)}^2 \label{helement}
\\
\sum_{K \in \mathcal{K}} h \| \left\lA \hlaplace (u - \pi u) \right\rA \|_{L^2(\partial \hK)}^2
&\lesssim
h^2 | u |_{H^3(\Gamma_h)}^2 \label{haverage}
\\
\sum_{K \in \mathcal{K}} h^{-1} \| \hEn \cdot \lJ\hgrad (u - \pi u)\rJ \|_{L^2(\partial \hK)}^2
&\lesssim
h^2 | u |_{H^3(\Gamma_h)}^2 \label{hjump}
\end{align}
and applying  Lemma~\ref{sobolevestimate} will yield the desired interpolation estimate.
Firstly, estimate \eqref{helement}
directly follows from \eqref{standardinterp} as $\| \hlaplace w \|_{L^2(K)} \leq |w|_{H^2(K)}$.
Secondly, using the triangle inequality on the average in \eqref{haverage} and
on the jump in \eqref{hjump} it suffices to show the elementwise estimates
\begin{align}
h \| \hlaplace (u - \pi u) \|_{L^2(\partial \hK)}^2
&\lesssim h^2 | u |_{H^3(K)}^2
\\
h^{-1} \| \hEn \cdot \hgrad (u - \pi u) \|_{L^2(\partial \hK)}^2
&\lesssim h^2 | u |_{H^3(K)}^2 \label{sectrace}
\end{align}
to prove estimates \eqref{haverage} and \eqref{hjump}.
Using \eqref{traceineq} we get
\begin{align}
h \| \hlaplace (u - \pi u) \|_{\partial \hK}^2
&\lesssim
\| \hlaplace (u - \pi u) \|_{\hK}^2
+ h^2
\| \hgrad \hlaplace (u - \pi u) \|_{\hK}^2
\\
&\lesssim
| u - \pi u |_{H^2(\hK)}^2
+ h^2
| u |_{H^3(\hK)}^2
\lesssim
h^2
| u |_{H^3(\hK)}^2
\end{align}
where we used \eqref{standardinterp} and that $\hlaplace (\pi u)$ is constant.
To show \eqref{sectrace} we again apply the trace inequality \eqref{traceineq}
and interpolation estimate \eqref{standardinterp}
\begin{align}
h^{-1} \| \hEn \cdot \hgrad (u - \pi u) \|_{L^2(\partial \hK)}^2
&\leq
h^{-1} \| \hgrad (u - \pi u) \|_{L^2(\partial \hK)}^2
\\&\lesssim
h^{-2} \| \hgrad (u - \pi u) \|_{L^2(\hK)}^2
+ 
| \hgrad (u - \pi u) |_{H^1(\hK)}^2
\\&\lesssim
h^{-2} | u - \pi u |_{H^1(\hK)}^2
+ 
| u - \pi u |_{H^2(\hK)}^2
\lesssim h^2
| u |_{H^3(K)}^2
\end{align}
Estimates \eqref{helement}--\eqref{hjump} are thereby established
which concludes the proof of estimate \eqref{hinterp}.

\proofterm{Estimate (\ref{binterp}):}
This estimate follows by calculations analogous to those in the proof above for the interpolation estimate on a flat element \eqref{hinterp}, albeit using the trace inequality \eqref{btraceineq} and interpolation estimate \eqref{curveinterp} for curved triangles.

\proofterm{Estimate (\ref{xinterp}):} 
This estimate directly follows from \eqref{binterp} and
\eqref{curveinterp}.
\qquad
\end{proof}

\begin{lemma}[Inverse estimates] \label{lemma:hinvest}
For $v_h \in W_h$ and $k=1,2,\dots$ the following inverse estimates hold
\begin{align}
\label{eq:inverseH1}
h^{k-1} | v_h |_{H^k_h(\Gamma)} &\lesssim | v_h |_{H^1(\Gamma)}
\\
\label{eq:inverseest}
h \sum_{K \in \mathcal{K}} \| \left\lA \hlaplace v_h \right\rA \|_{L^2(\partial K)}^2
&\lesssim
\sum_{K \in \mathcal{K}} \| \hlaplace v_h \|_{L^2(K)}^2
\\
\label{eq:binverseest}
h \sum_{K \in \mathcal{K}} \| \left\lA \blaplace v_h \right\rA \|_{L^2(\partial \bK)}^2
&\lesssim
\sum_{K \in \mathcal{K}}
\left(
\left\| \blaplace v_h \right\|_{L^2(\bK)}^2
+
h | v_h |_{H^1(K^\ell)}^2
\right)
\end{align}
with constants independent of the meshsize $h$ and the parameter $\beta$.
\end{lemma}
\begin{proof}
\proofterm{Estimate (\ref{eq:inverseH1}):}
By Lemma~\ref{sobolevestimate} we have
$
| v_h |_{H^k_h(\Gamma)} \lesssim \sum_{m=1}^k | v_h |_{H^m_h(\Gamma_h)}
$ and thus, as $v_h$ is continuous, the estimate follows from establishing the elementwise estimate
\begin{align}
h^{k-1} | v_h |_{H^k(\hK)} \lesssim | v_h |_{H^1(K)}
\label{elementwiseInvh}
\end{align}
and again applying Lemma~\ref{sobolevestimate}. To show estimate \eqref{elementwiseInvh}
we first note that $v_h|_\hK \in \mathcal{P}_2(\hK)$. Affinely mapping to a reference triangle $\hK_{\text{ref}}$ yields a mapped function $\widehat{v_h} \in \mathcal{P}_2(\hK_{\text{ref}})$. On the reference triangle we now establish the inequality
$| \widehat{v_h} |_{H^k(\hK_{\text{ref}})} \lesssim | \widehat{v_h} |_{H^1(\hK_{\text{ref}})}$ holds.
If the right hand side is zero then $\widehat{v_h}$ must be a constant function and in turn the left hand side must also be zero. Due to the finite dimensionality of $\mathcal{P}_2(\hK_{\text{ref}})$ the inequality on the reference domain follows. Affinely mapping back to $K$ yields \eqref{elementwiseInvh}
and in turn estimate \eqref{eq:inverseH1}.

\proofterm{Estimate (\ref{eq:inverseest}):}
Applying the triangle inequality to each average term and then the trace inequality \eqref{traceineq} this inverse estimate follows as $v_h \in \mathcal{P}_2(K)$.

\proofterm{Estimate (\ref{eq:binverseest}):}
We apply the triangle inequality on the average and note that\\
$\| \blaplace v_h \|_{L^2(\partial \bK)} \lesssim \| (\blaplace v_h)^\ell \|_{L^2(\partial \hK)}$.
In Appendix \ref{appendix:Cx} we in \eqref{eq:lapexpl1} express
$\hlaplace v_h$ for an extended function $v_h$ in terms of the exact operators acting on $v_h$ evaluated
at the exact surface. Reviewing how the terms in this expression scale with $h$, isolating the appropriate term, squaring both sides and applying the elementary inequality
$(a_1+\dots+a_n)^2 \leq n \left( a_1^2 + \dots + a_n^2 \right)$, the following pointwise estimates follow
\begin{align}
\left( (\blaplace v_h)^\ell \right)^2
&\lesssim \left( \hlaplace v_h \right)^2
+ h^2 \left( (D_\Gamma v_h)^\ell \right)^2
+ h^4 \left( (D_\Gamma^2 v_h)^\ell \right)^2
\label{skfjs}
\\
\left( \hlaplace v_h \right)^2
&\lesssim \left( (\blaplace v_h)^\ell \right)^2
+ h^2 \left( (D_\Gamma v_h)^\ell \right)^2
+ h^4 \left( (D_\Gamma^2 v_h)^\ell \right)^2
\label{uisvb}
\end{align}
Integrating \eqref{skfjs} over a curved element boundary $\partial\bK$ and applying the trace inequality \eqref{btraceineq} on non-Laplacian terms yield
\begin{align}
\| \blaplace v_h \|_{L^2(\partial\bK)}^2
&\lesssim
\| \hlaplace v_h \|_{L^2(\partial\hK)}^2
+ h^2 \| D_\Gamma v_h \|_{L^2(\partial\bK)}^2
+ h^4 \| D_\Gamma^2 v_h \|_{L^2(\partial\bK)}^2
\\&\lesssim
\| \hlaplace v_h \|_{L^2(\partial\hK)}^2
+ h | v_h |_{H^1(\bK)}^2
\\&\qquad\nonumber
+ h^3 | v_h |_{H^2(\bK)}^2
+ h^5 |  v_h |_{H^3(\bK)}^2
\\&\lesssim
\| \hlaplace v_h \|_{L^2(\partial\hK)}^2
+ h | v_h |_{H^1(\bK)}^2
\end{align}
where we use \eqref{eq:inverseH1} in the last inequality.
To deal with the first term we establish the elementwise inequality
\begin{align}
\| \hlaplace v_h \|_{L^2(\partial \hK)}^2 \lesssim h^{-1} \| \hlaplace v_h \|_{L^2(\hK)}^2
\end{align}
by mapping to a reference triangle $K_{\text{ref}}$, noting that the inequality on the reference domain holds due to the finite dimensionality of $\mathcal{P}_2(K)$, and mapping back to $K$.
From \eqref{uisvb} we deduce
\begin{align}
h^{-1} \| \hlaplace v_h \|_{L^2(\hK)}^2
&\lesssim
h^{-1} \| \blaplace v_h \|_{L^2(\bK)}^2
+ | v_h |_{H^1(\bK)}^2
\\&\quad\nonumber
+ h^2 | v_h |_{H^2(\bK)}^2
+ h^4 |  v_h |_{H^3(\bK)}^2
\\&\lesssim
h^{-1} \| \blaplace v_h \|_{L^2(\bK)}^2
+ | v_h |_{H^1(\bK)}^2
\end{align}
where we again use \eqref{eq:inverseH1} in the last inequality.
Combining the above results and summing over all elements give
estimate \eqref{eq:binverseest}.
\qquad
\end{proof}

\begin{lemma}\label{lemma:techsmallerthan}
It holds that
\begin{alignat}{2}
\| v_h \|_{H_h^*(\Gamma)} &\lesssim \bnorm{ v_h} \qquad &&\text{for all $v_h \in W_h$}
\label{technormVh}
\\
\bnorm{ v } \leq \| v \|_{H_h^*(\Gamma)} &\lesssim \| v \|_{H^3(\Gamma)} \qquad &&\text{for all $v \in V$}
\label{technormV}
\end{alignat}
where $\| \cdot \|_{H_h^*(\Gamma)}$ and $\bnorm{\cdot}$ are defined in
\eqref{technicalnorm} and \eqref{bnorm}, respectively.
\end{lemma}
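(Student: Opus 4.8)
The technical norm \eqref{technicalnorm} splits as $\| v \|_{H_h^*(\Gamma)} = \bnorm{v} + | v |_{H^1(\Gamma)} + h | v |_{H_h^2(\Gamma)} + h^2 | v |_{H_h^3(\Gamma)}$, so in both cases the plan is to treat these four contributions one at a time, assembling the lemmas already established. For \eqref{technormVh} and $v_h \in W_h$ the first contribution is just $\bnorm{v_h}$, the second is controlled by Lemma~\ref{h1lemma} as $| v_h |_{H^1(\Gamma)} \lesssim \bnorm{v_h}$, and the last two are absorbed into the $H^1$ semi-norm through the inverse estimate of Lemma~\ref{lemma:hinvest}: taking $k=2$ and $k=3$ gives $h | v_h |_{H_h^2(\Gamma)} \lesssim | v_h |_{H^1(\Gamma)}$ and $h^2 | v_h |_{H_h^3(\Gamma)} \lesssim | v_h |_{H^1(\Gamma)}$, each again $\lesssim \bnorm{v_h}$ by Lemma~\ref{h1lemma}. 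Summing the four bounds proves \eqref{technormVh}.

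For \eqref{technormV} the left inequality $\bnorm{v} \leq \| v \|_{H_h^*(\Gamma)}$ is immediate from \eqref{technicalnorm}, the three remaining terms being nonnegative. For the right inequality the weighted semi-norm terms are harmless: the curved triangles $\{\bK\}$ partition $\Gamma$ and $v \in V$ is smooth, so the broken semi-norms agree with the full ones, $| v |_{H_h^m(\Gamma)} = | v |_{H^m(\Gamma)} \leq \| v \|_{H^3(\Gamma)}$ for $m=1,2,3$, and as $h \lesssim 1$ each of the three terms is $\lesssim \| v \|_{H^3(\Gamma)}$. The substantive step is bounding $\bnorm{v}$, which I would do term by term. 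The element term satisfies $\sum_K \| \blaplace v \|_{L^2(\bK)}^2 = \| \blaplace v \|_{L^2(\Gamma)}^2 \lesssim | v |_{H^2(\Gamma)}^2$ since $\blaplace v = \trace(D_\Gamma^2 v)$. The jump term vanishes entirely: because $v \in V \subset H^4(\Gamma) \hookrightarrow C^2(\Gamma)$, the tangential gradient $\bgrad v$ is continuous across every curved edge $\bE \subset \Gamma$, so $\lJ \bgrad v \rJ = 0$ — precisely the consistency observation recorded in \S\ref{section:weak}. For the average term $\lA \blaplace v \rA = \blaplace v$ on each edge by smoothness, and applying the curved-element trace inequality \eqref{btraceineq} to $\blaplace v$ gives
\begin{align}
h \sum_K \| \blaplace v \|_{L^2(\partial \bK)}^2
\lesssim
\| \blaplace v \|_{L^2(\Gamma)}^2 + h^2 | \blaplace v |_{H^1(\Gamma)}^2
\lesssim
\| v \|_{H^3(\Gamma)}^2 .
\end{align}
Collecting the three terms yields $\bnorm{v} \lesssim \| v \|_{H^3(\Gamma)}$ and hence \eqref{technormV}.

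I expect the only genuinely delicate points to lie in this energy-norm bound for the smooth function $v$: first, recognizing that the conormal–gradient jump term is identically zero by the $C^2$ (indeed $C^1$) regularity guaranteed by the Sobolev embedding, and second, checking that the trace inequality \eqref{btraceineq}, stated on $\Gamma$ for normally extended functions, may legitimately be applied to $\blaplace v$ rather than to $v$ itself — which is valid since $v \in H^4(\Gamma)$ forces $\blaplace v \in H^1(\Gamma)$. Everything else is a routine reassembly of Lemmas~\ref{h1lemma}, \ref{lemma:hinvest}, and \ref{sobolevestimate} together with the boundedness of $h$.
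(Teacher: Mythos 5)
Your proof is correct and follows essentially the same route as the paper: Lemma~\ref{lemma:hinvest} together with Lemma~\ref{h1lemma} for \eqref{technormVh}, and for \eqref{technormV} the trivial left inequality plus the trace inequality \eqref{btraceineq} applied to the average term of $\bnorm{\cdot}$. The paper's own proof is far terser and leaves implicit the two points you rightly flag as delicate --- the vanishing of the conormal-gradient jump via $H^4(\Gamma)\hookrightarrow C^2(\Gamma)$ and the legitimacy of applying \eqref{btraceineq} to $\blaplace v$ --- so your write-up is a faithful, more complete rendering of the same argument.
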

\begin{proof}
By \eqref{technicalnorm} we have  $\| v \|_{H_h^*(\Gamma)}
:=
\bnorm{v}
+
| v |_{H^1(\Gamma)}
+
h | v |_{H_h^2(\Gamma)}
+
h^2 | v |_{H_h^3(\Gamma)}$.
For the first estimate we need to limit the last three terms by $\bnorm{v}$.
This is established by applying \eqref{eq:inverseH1} and Lemma~\ref{h1lemma}.

In the second estimate the first inequality follows trivially from the definition of $\| v \|_{H_h^*(\Gamma)}$
and the second inequality from
using the trace inequality \eqref{btraceineq} on the term
$h \| \left\lA \hlaplace v \right\rA \|_{L^2(\partial \hK)}^2$ in \eqref{bnorm}.
\end{proof}

In the following lemma we collect two basic results on continuity and coercivity for the method.
\begin{lemma}[Continuity and coercivity of the method] \label{lemma:basics}
\noindent 1.
There are constants which are independent of h but in general depend on $\beta$, such
that
\begin{align}
a(v,w) &\lesssim \bnorm{v} \, \bnorm{w} \label{bcont} \\
a_h(v,w) &\lesssim \tnorm{v} \, \tnorm{w} \label{hcont}
\end{align}
hold for all $v,w \in W$. \\[-0.5em]

\noindent 2.
For $\beta$ sufficiently large the coercivity estimates
\begin{align}
\bnorm{v_h}^2 &\lesssim a(v_h,v_h)
\label{bcoercivity}
\\
\tnorm{v_h}^2 &\lesssim a_h(v_h,v_h)
\label{coercivity}
\end{align}
hold for all $v_h \in W_h$, with positive constants independent of $h$ and $\beta$.
\end{lemma}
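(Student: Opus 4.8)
The plan is to obtain the two continuity bounds \eqref{bcont}--\eqref{hcont} by direct Cauchy--Schwarz estimates and the two coercivity bounds \eqref{bcoercivity}--\eqref{coercivity} by the classical interior-penalty argument: control the symmetrizing cross term with Young's inequality, dominate the resulting average term by the elementwise Laplacian using the inverse estimates \eqref{eq:inverseest}--\eqref{eq:binverseest}, and then take $\beta$ large. For continuity I would split $a(v,w)$ as written in \eqref{thea} into its bulk term $\sum_K(\blaplace v,\blaplace w)_\bK$, the two symmetric cross terms, and the penalty term, and estimate each factor against the three pieces of the norm \eqref{bnorm}. The bulk term is handled by Cauchy--Schwarz directly; each cross term $\sum_E(\lA\blaplace v\rA,\bEn\cdot\lJ\bgrad w\rJ)_\bE$ I would rewrite with matching weights as $\sum_E(h^{1/2}\lA\blaplace v\rA,h^{-1/2}\bEn\cdot\lJ\bgrad w\rJ)_\bE$ so that it is bounded by the $h\|\lA\blaplace v\rA\|_{L^2(\bE)}^2$ and $h^{-1}\|\bEn\cdot\lJ\bgrad w\rJ\|_{L^2(\bE)}^2$ contributions to $\bnorm{\cdot}$, and the penalty term carries the factor $\beta$. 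This gives \eqref{bcont} with a constant of order $1+\beta$, and the verbatim computation with $\Gamma_h$-operators and the norm \eqref{hnorm} yields \eqref{hcont}.

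For the coercivity \eqref{coercivity} on $\Gamma_h$ I would put $v=w=v_h$ in \eqref{theah}, so that $a_h(v_h,v_h)$ equals $\sum_K\|\hlaplace v_h\|_{L^2(\hK)}^2$ minus twice the cross term plus $\beta\sum_E h^{-1}\|\hEn\cdot\lJ\hgrad v_h\rJ\|_{L^2(\hE)}^2$. Young's inequality with a small parameter $\delta$ bounds the cross term by $\delta\sum_E h\|\lA\hlaplace v_h\rA\|_{L^2(\hE)}^2 + \delta^{-1}\sum_E h^{-1}\|\hEn\cdot\lJ\hgrad v_h\rJ\|_{L^2(\hE)}^2$, and the inverse estimate \eqref{eq:inverseest} controls $h\sum_K\|\lA\hlaplace v_h\rA\|_{L^2(\partial\hK)}^2$ by $\sum_K\|\hlaplace v_h\|_{L^2(\hK)}^2$. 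Choosing $\delta$ small enough to keep a positive coefficient on the Laplacian term and then $\beta$ large enough to dominate $\delta^{-1}$ leaves a positive multiple of $\sum_K\|\hlaplace v_h\|_{L^2(\hK)}^2 + h^{-1}\sum_K\|\hEn\cdot\lJ\hgrad v_h\rJ\|_{L^2(\partial\hK)}^2$; since \eqref{eq:inverseest} also bounds the remaining average term in $\tnorm{v_h}^2$ by the Laplacian term, this lower bound is equivalent to $\tnorm{v_h}^2$, with a final constant independent of $\beta$.

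The coercivity \eqref{bcoercivity} on $\Gamma$ follows the same template, and this is where I expect the only real difficulty. The exact-surface inverse estimate \eqref{eq:binverseest} is not clean: its right-hand side carries an extra $\bnorm{v_h}^2$ remainder. Expanding that remainder, the average contribution $h\sum_K\|\lA\blaplace v_h\rA\|_{L^2(\partial\bK)}^2$ it reintroduces can be moved back to the left-hand side for $h<h_0$ small, which yields the effective bound $h\sum_K\|\lA\blaplace v_h\rA\|_{L^2(\partial\bK)}^2 \lesssim \sum_K\|\blaplace v_h\|_{L^2(\bK)}^2 + \sum_K\|\bEn\cdot\lJ\bgrad v_h\rJ\|_{L^2(\partial\bK)}^2$. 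Inserting this into the Young-split cross term, the extra undifferentiated jump term it produces is of order $h$ relative to the penalty density $h^{-1}\|\bEn\cdot\lJ\bgrad v_h\rJ\|^2$ and is absorbed for small $h$ as well, so after fixing $\delta$ small and $\beta$ large one recovers $\bnorm{v_h}^2\lesssim a(v_h,v_h)$. The delicate point is therefore bookkeeping: tracking these small-$h$ absorptions so that every lower-order term is genuinely dominated and the final coercivity constant stays independent of both $h$ and $\beta$, everything else being routine modulo the standard factor-of-two accounting between the edgewise sums in \eqref{thea}--\eqref{theah} and the element-boundary sums in \eqref{bnorm}--\eqref{hnorm}.
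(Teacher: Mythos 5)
Your proposal is correct and follows essentially the same route as the paper: Cauchy--Schwarz term-by-term with the matching $h^{\pm 1/2}$ weights for continuity, and the standard interior-penalty argument (Young's inequality, the inverse estimates \eqref{eq:inverseest} and \eqref{eq:binverseest}, then $\beta$ large) for coercivity. The only difference is that the paper dispatches \eqref{bcoercivity} with the single remark ``analogous arguments and the inverse inequality \eqref{eq:binverseest}'', whereas you spell out the small-$h$ kick-back needed to absorb the $\bnorm{v_h}^2$ remainder in \eqref{eq:binverseest} --- a detail the paper leaves implicit, and which you handle correctly.
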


\begin{proof}
\proofterm{Part 1:}
Using the Cauchy--Schwarz inequality on each term in $a(v,w)$ and\linebreak $a_h(v_h,w_h)$, respectively, the inequalities readily follow.

\proofterm{Part 2:}
For estimate \eqref{coercivity} we have
\begin{multline}
a_h(v_h,v_h) = 
\sum_{K \in \mathcal{K}} \| \hlaplace v_h \|_{L^2(\hK)}^2
-
2 \sum_{E \in \mathcal{E}}  \left(\left\lA \hlaplace v_h \right\rA, \lJ\hEn \cdot \hgrad v_h \rJ \right)_\hE 
\\
+ \sum_{E \in \mathcal{E}} \beta \left( h^{-1} \lJ\hEn \cdot \hgrad v_h \rJ, \lJ\hEn \cdot \hgrad v_h \rJ \right)_\hE
\end{multline}

Using the Cauchy--Schwarz inequality followed by the 
standard inequality $2 ab < \epsilon a^2 + \epsilon^{-1} b^2$, for 
any positive $\epsilon$, and finally the inverse inequality 
\eqref{eq:inverseest} we obtain
\begin{align}
 & -2 \sum_{E \in \mathcal{E}} 
   ( \left\lA \hlaplace v_h \right\rA , \lJ\hEn \cdot \hgrad v_h \rJ )_\hE  
  \geq 
	\\&\qquad\qquad\qquad\quad \nonumber
	\sum_{K \in \mathcal{K}}
	-\epsilon C \| \hlaplace v_h \|_{L^2(\hK)}^2
       - \epsilon^{-1} 
     h^{-1} \| \lJ\hEn \cdot \hgrad v_h \rJ \|^2_{L^2(\partial \hK)}
\end{align}
Given $c$, with $0<c<1$, we choose $\epsilon C = (1-c)/3$ 
and take $\beta \geq c + \epsilon^{-1}$ 
we obtain the coercivity estimate 
\begin{align}
c \tnorm{v_h}^2 \leq a_h(v_h,v_h) \qquad \text{for all $v_h \in W_h$}
\end{align}
Coercivity estimate \eqref{bcoercivity} follows by analogous arguments and the inverse inequality \eqref{eq:binverseest} in combination with Lemma~\ref{h1lemma}.
\end{proof}

Next we turn to estimating the difference between the exact and approximate bilinear and linear forms
for discrete functions and introduce the following lemma.
\begin{lemma} \label{lemma:adiff}
The following estimates hold for the approximation errors of the approximate linear functional $l_h(\cdot)$ and approximate bilinear form $a_h(\cdot,\cdot)$
\begin{alignat}{2}
| l(w_h) - l_h(w_h) | &\lesssim h^2 \| f \|_{L^2(\Gamma)} \bnorm{ w_h } \label{ldiff}
\qquad&&\text{for $w_h\in W_h$}
\\
| a(v,w) - a_h(u,w) | &\lesssim h \| v \|_{H^*_h(\Gamma)} \| w \|_{H^*_h(\Gamma)} \label{adiff}
\qquad&&\text{for $v,w\in W$}
\\
| a(\omega,\phi) - a_h(\omega,\phi) | &\lesssim h^2 \| \omega \|_{H^4(\Gamma)} \| \phi \|_{H^4(\Gamma)} \label{acontdiff}
\qquad&&\text{for $\omega,\phi\in V$}
\end{alignat}
for $h < h_0$, with $h_0$ sufficiently small.
\end{lemma}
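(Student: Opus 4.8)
The three estimates are of genuinely different character, so I would dispatch them separately, from easiest to hardest.

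For \eqref{ldiff} the key observation is that both functionals are invariant under adding a constant to their argument: $l(w_h+c)=l(w_h)$ because $(f,1)_\Gamma=0$, and $l_h(w_h+c)=l_h(w_h)$ because $(f_h,1)_{\Gamma_h}=0$ by construction \eqref{eq:fh}. I may therefore replace $w_h$ by $w_h-\mathcal{P}_0^\Gamma w_h$, for which $\|w_h\|_{L^2(\Gamma)}=\|w_h\|_{L^2(\Gamma)/\mathbb{R}}\lesssim\bnorm{w_h}$ by Lemma~\ref{h1lemma}. Transporting $l(w_h)$ to $\Gamma_h$ via the area element gives $l(w_h)-l_h(w_h)=\int_{\Gamma_h}f\,w_h(\mu_h-1)\,ds_h+|\Gamma_h|^{-1}(f,1)_{\Gamma_h}(w_h,1)_{\Gamma_h}$. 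The first term is $\lesssim\|1-\mu_h\|_{L^\infty(\Gamma_h)}\|f\|_{L^2(\Gamma_h)}\|w_h\|_{L^2(\Gamma_h)}\lesssim h^2\|f\|_{L^2(\Gamma)}\bnorm{w_h}$ using \eqref{muest} and Lemma~\ref{sobolevestimate}; the second is even smaller, since $(f,1)_{\Gamma_h}=\int_{\Gamma_h}f(1-\mu_h)\,ds_h\lesssim h^2\|f\|_{L^2(\Gamma)}$ because $\int_\Gamma f\,ds=0$. This gives \eqref{ldiff}.

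For \eqref{adiff} (reading the right member as $a_h(v,w)$) I would split $a-a_h$ into its three groups — the element Laplacian product, the two consistency terms, and the penalty term — pull every integral back to $\Gamma_h$ using $\mu_h$ and the analogous edge Jacobian, and insert the appendix expansions \eqref{eq:lapexpl1}--\eqref{eq:lapexplEnd}, writing $(\blaplace v)^\ell=\hlaplace v+R_v$ with $\|R_v\|_{L^2}\lesssim h|v|_{H^1}+h^2|v|_{H^2_h}\lesssim h\|v\|_{H^*_h(\Gamma)}$. The element term then produces $\mu_h-1$ contributions ($\mathcal{O}(h^2)$) and cross terms $R_v\hlaplace w$, $\hlaplace v\,R_w$, $R_vR_w$, each $\mathcal{O}(h)$ after Cauchy--Schwarz. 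The crucial point for the edge terms is that the conormal--gradient mismatch is one order better than naively expected: since $\hEnK^\pm\perp\hn$ and extended functions satisfy $\bn\cdot\nabla v=0$, the $\mathcal{O}(h)$ part of $\bEnK^\pm\cdot\bgrad v-\hEnK^\pm\cdot\hgrad v$ cancels, leaving $\lesssim h^2|\nabla v|$ by \eqref{test}. Combining this with the trace inequalities \eqref{traceineq}, \eqref{btraceineq} to turn edge $L^2$-norms into element norms, the penalty factor $h^{-1}$ and the consistency terms all close at $\mathcal{O}(h)\|v\|_{H^*_h(\Gamma)}\|w\|_{H^*_h(\Gamma)}$; the various powers of $h$ are exactly what the weights $1,h,h^2$ in \eqref{technicalnorm} are designed to absorb.

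The estimate \eqref{acontdiff} is the main obstacle, and it is where the high regularity $\omega,\phi\in V=H^4(\Gamma)\hookrightarrow C^2(\Gamma)$ must be exploited. Because $\omega$ is $C^1$ on $\Gamma$ the exact gradient jumps vanish, so every edge term of $a$ drops out and $a(\omega,\phi)=\sum_K(\blaplace\omega,\blaplace\phi)_\bK$. Using \eqref{eq:tproperty} together with $\bn\cdot\nabla\omega^\ell=0$ one writes $\hEn\cdot\lJ\hgrad\omega\rJ=\lJ\hEnK\rJ\cdot\bgrad\omega$, and the surviving pieces of $a_h(\omega,\phi)-a(\omega,\phi)$ are the element difference, the two consistency terms, and the penalty term. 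The penalty term is directly $\mathcal{O}(h^2)$ since each factor is $\lesssim h^2|\nabla\cdot|$ pointwise by \eqref{test}. The remaining pieces each carry a dangerous $\mathcal{O}(h)$ part that the energy-norm bound \eqref{adiff} only controls to order $h$. The idea is to expose this $\mathcal{O}(h)$ part, in every case, as an integral of $\hP\bn$ against a smooth field: using $\hn-\bn=-\hP\bn+\mathcal{O}(h^2)$ and the identity $\nabla^2\omega^\ell\,\bn=-H\nabla\omega^\ell$ (which follows from $\bn\cdot\nabla\omega^\ell=0$), the element Laplacian mismatch reduces to $\int_{\Gamma_h}(\hP\bn)\cdot\chi\,ds$ with $\chi$ built from the curvature and the derivatives of $\omega,\phi$, while the consistency contributions are reorganised — combining the edge integrals generated by elementwise Green's formula with the explicit edge terms — into the same pattern. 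Lemma~\ref{lemma:geomnonstandard} then upgrades each such integral from the naive $\mathcal{O}(h)$ to $\mathcal{O}(h^2)\|\chi\|_{W_1^1(\Gamma_h)}$, and $\|\chi\|_{W_1^1(\Gamma_h)}\lesssim\|\omega\|_{H^4(\Gamma)}\|\phi\|_{H^4(\Gamma)}$ by Lemma~\ref{sobolevestimate} (the extra derivative demanded by $W_1^1$ is precisely why four derivatives are needed). The hardest and most delicate step is exactly this bookkeeping: identifying, for the consistency terms, the combination of volume and edge contributions that assembles into the $\int_{\Gamma_h}(\hP\bn)\cdot\chi$ form so that Lemma~\ref{lemma:geomnonstandard} applies, and verifying that the resulting $\chi$ lies in $[W_1^1(\Gamma_h)]^3$ with norm controlled by $\|\omega\|_{H^4(\Gamma)}\|\phi\|_{H^4(\Gamma)}$.
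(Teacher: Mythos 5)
Your proposal is correct and follows essentially the same route as the paper's proof: the same constant-invariance and $\mu_h$ change-of-measure argument for \eqref{ldiff}, and for \eqref{adiff} the same splitting into a change-of-measure part, single operator-difference parts, and a product-of-differences part, controlled through the expansions \eqref{eq:lapexpl1}--\eqref{eq:lapexplEnd}, the conormal estimate \eqref{test}, and trace inequalities (the paper packages this as Lemma~\ref{lemma:elmopdiff}). For \eqref{acontdiff} you also hit the paper's key idea exactly — using $H^4(\Gamma)\hookrightarrow C^2(\Gamma)$ to kill the exact jump terms and then exposing each remaining $\mathcal{O}(h)$ contribution (both the element Laplacian mismatch and the Green's-formula-reorganized edge terms) as an integral of $\hP\cdot\bn$ against a $W_1^1$ field so that Lemma~\ref{lemma:geomnonstandard} upgrades it to $\mathcal{O}(h^2)$, which is precisely the content of Lemma~\ref{lemma:specintegrals}.
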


\begin{remark}
While the estimate for the consistency error in bilinear forms \eqref{adiff} is sufficient for proving the correct order
of convergence in energy norm, it is insufficient for proving the correct convergence in $L^2$ norm.
For this reason we also include estimate \eqref{acontdiff} for smooth functions, which we utilize in the proof
of the $L^2$ estimate as we therein consider approximations to functions in $V$.
In the proof of \eqref{acontdiff} it no longer suffices to only compare terms elementwise,
but we also need to consider complete integral expressions where we utilize the high regularity of functions in $V$ and Lemma~\ref{lemma:geomnonstandard}.
\end{remark}

\begin{proof}

\quad
We here prove each estimate but give some of the details in Appendix~\ref{appendix:Cx}.

\proofterm{Estimate (\ref{ldiff}):}
As $0=(1,f)_\Gamma = (\mu_h,f^\ell)_{\Gamma_h}$ we by the Cauchy--Schwarz inequality have
\begin{align}
(1,f^\ell)_{\Gamma_h} = (1 - \mu_h,f^\ell)_{\Gamma_h} \leq |\Gamma_h|^{1/2} \| 1 - \mu_h \|_{L^\infty(\Gamma_h)} \| f^\ell \|_{L^2(\Gamma_h)}
\end{align}
By definition $f_h:=f^\ell - \mathcal{P}_0^{\Gamma_h} f^\ell$. Using $(f,1)_\Gamma=(f_h,1)_{\Gamma_h}=0$, estimate \eqref{muest}, and the Cauchy--Schwarz inequality we get
\begin{align}
&\left| l( w_h ) - l_h( w_h ) \right|
= \left| ( f , w_h )_\Gamma - ( f_h , w_h )_{\Gamma_h} \right| \\
&\qquad\qquad= \left| ( f , w_h - \mathcal{P}_0^\Gamma w_h )_\Gamma - ( f_h , w_h - \mathcal{P}_0^\Gamma w_h )_{\Gamma_h} \right| \\
&\qquad\qquad\leq \left| \left( (\mu_h-1) f^\ell , w_h - \mathcal{P}_0^\Gamma w_h \right)_{\Gamma_h} \right|
+ \left| \left( \mathcal{P}_0^{\Gamma_h} f^\ell , w_h - \mathcal{P}_0^\Gamma w_h \right)_{\Gamma_h} \right|
\\
&\qquad\qquad\lesssim \| 1 - \mu_h \|_{L^\infty(\Gamma_h)} \|f^\ell \|_{L^2(\Gamma_h)} \|w_h - \mathcal{P}_0^\Gamma w_h \|_{L^2(\Gamma_h)} \\
&\qquad\qquad\lesssim h^2 \|f^\ell\|_{L^2(\Gamma_h)} \|w_h - \mathcal{P}_0^\Gamma w_h^\ell \|_{L^2(\Gamma_h)} \\
&\qquad\qquad\lesssim h^2 \|f\|_{L^2(\Gamma)} \|w_h^\ell \|_{L^2(\Gamma)/\mathbb{R}} \label{eq:lestex444}\\
&\qquad\qquad\lesssim h^2 \|f\|_{L^2(\Gamma)} \bnorm{ w_h^\ell }
\end{align}
where we use Lemma \ref{sobolevestimate} in \eqref{eq:lestex444} and Lemma \ref{h1lemma} in the last inequality.

\proofterm{Estimate (\ref{adiff}):}
All terms in $a(v,w) - a_h(v,w)$ can be paired and rewritten as
\begin{align} \label{operatordiff}
\left( \mathcal{A} v , \mathcal{B} w \right)_{\Omega_\Gamma} 
- (\mathcal{A}_h v, \mathcal{B}_h w )_{\Omega_h}
\end{align}
where $\Omega_\Gamma, \Omega_h$ are either $\Gamma,\Gamma_h$ or $\mathcal{E}_\Gamma,\mathcal{E}_h$
depending on the term, $\mathcal{A},\mathcal{B}$ are the differential operators in the term, and $\mathcal{A}_h,\mathcal{B}_h$ are the corresponding approximate operators.
For the first term in \eqref{operatordiff} we change the integration
domain to the approximate surface. Adding and subtracting terms then yields
\begin{align}
\left( \mathcal{A} v , \mathcal{B} w \right)_{\Omega_\Gamma}
&= \left( \mu_h (\mathcal{A} v )^\ell , (\mathcal{B} w )^\ell \right)_{\Omega_h}
\\&= \left( (\mu_h - 1) (\mathcal{A} v )^\ell , (\mathcal{B} w )^\ell \right)_{\Omega_h}
+ \left( (\mathcal{A} v )^\ell , (\mathcal{B} w )^\ell \right)_{\Omega_h}
\end{align}
By further adding and subtracting terms, may express \eqref{operatordiff} as
\begin{align}
\left( \mathcal{A} v , \mathcal{B} w \right)_{\Omega_\Gamma}
- (\mathcal{A}_h v, \mathcal{B}_h w )_{\Omega_h}
&=
\left( (\mu_h - 1) (\mathcal{A} v )^\ell , (\mathcal{B} w )^\ell \right)_{\Omega_h}  \\ \nonumber
&\quad +
\left( (\mathcal{A} v )^\ell - \mathcal{A}_h v , (\mathcal{B} w )^\ell \right)_{\Omega_h}  \\ \nonumber
&\quad +
\left( (\mathcal{A} v )^\ell , (\mathcal{B} w )^\ell - \mathcal{B}_h w \right)_{\Omega_h}  \\ \nonumber
&\quad -
\left( (\mathcal{A} v )^\ell - \mathcal{A}_h v , (\mathcal{B} w )^\ell  - \mathcal{B}_h w \right)_{\Omega_h}  \\
&=I + II + III + IV
\end{align}
 Clearly, due to the bound \eqref{muest} on $(1 - \mu_h)$ and that all terms with $(\mathcal{A} w )^\ell$ and
$(\mathcal{B} w )^\ell$ are included in $\bnorm{w}$, we by the Cauchy--Schwarz inequality have
\begin{align}
\sum_{K\in\mathcal{K}} \left| I \right| \lesssim h^2 \bnorm{v} \bnorm{w}
\leq h^2 \| v \|_{H_h^*(\Gamma)} \| w \|_{H_h^*(\Gamma)}
\label{eq:gfdn98}
\end{align}

For the remaining three terms; $II$, $III$ and $IV$, it suffices to prove that for $v \in W$ we have
\begin{align}
\| (\mathcal{A} v )^\ell - \mathcal{A}_h v \|_{L^2(\Omega_h)} \lesssim h \| v \|_{H_h^*(\Gamma)}
\end{align}
for all operator and domain pairs $\{ \mathcal{A}_h,\Omega_h \}$ present in $a_h(\cdot,\cdot)$.
We collect and prove this inequality for all operator and domain pairs in Lemma~\ref{lemma:elmopdiff}.
By the Cauchy--Schwarz inequality it then follows that
\begin{alignat}{2}
\sum_{K\in\mathcal{K}} \left| II \right| +  \left| III \right| 
&\lesssim h \| v \|_{H_h^*(\Gamma)} \| w \|_{H_h^*(\Gamma)}
\qquad&&\text{for all $v,w \in W$}
\label{eq:908ebn}
\\
\sum_{K\in\mathcal{K}} \left| IV \right| 
&\lesssim h^2 \| v \|_{H_h^*(\Gamma)} \| w \|_{H_h^*(\Gamma)}
\qquad&&\text{for all $v,w \in W$}
\label{eq:49t8u3}
\end{alignat}
From \eqref{eq:gfdn98}, \eqref{eq:908ebn} and \eqref{eq:49t8u3} we now conclude that estimate \eqref{adiff} holds.

\proofterm{Estimate (\ref{acontdiff}):}
As our functions $\eta,\phi\in V\subset W$ we use \eqref{eq:gfdn98} and \eqref{eq:49t8u3} from the proof of the previous estimate in combination with Lemma~\ref{lemma:techsmallerthan} to prove
\begin{align}
\sum_{K\in\mathcal{K}} \left| I \right| + \left| IV \right| 
&\lesssim h^2 \| \eta \|_{H^3(\Gamma)} \| \phi \|_{H^3(\Gamma)}
\qquad\text{for all $\eta,\phi \in V$}
\end{align}
It remains to prove bounds of the same order for terms $II$ and $III$.
Due to the Sobolev embedding $H^3(\Gamma) \hookrightarrow C^1(\Gamma)$ \cite[Thm. 2.20]{aubin1982}
we have $\bEn \cdot \lJ \bgrad \phi \rJ)^\ell = 0$, and the only remaining estimates are
\begin{align}
\left( ( \blaplace \eta )^\ell ,
( \blaplace \phi )^\ell
- \hlaplace \phi^\ell \right)_{\Gamma_{h}}
&\lesssim h^2 \| \eta \|_{H^3(\Gamma)} \| \phi \|_{H^3(\Gamma)}
\\
\left( (\left\lA \blaplace \eta \right\rA)^\ell ,
(\bEn \cdot \lJ \bgrad \phi \rJ)^\ell
- \hEn \cdot \lJ \hgrad \phi^\ell \rJ \right)_{\mathcal{E}_{h}}
&\lesssim h^2 \| \eta \|_{H^4(\Gamma)} \| \phi \|_{H^4(\Gamma)}
\end{align}
which are proven in Lemma~\ref{lemma:specintegrals}.
This completes the proof.
\qquad
\end{proof}

A consequence of the above proof is that for functions in $W_h$ the norm $\bnorm{ \cdot }$, using the exact differential operators \eqref{bnorm},
is equivalent to the norm $\tnorm{ \cdot }$, using approximate operators \eqref{hnorm}. We formulate this in the next lemma.
\begin{lemma}(Equivalence of norms) \label{lemma:normequiv}
For $w_h \in W_h$ the norms $\bnorm{ w_h }$ and $\tnorm{ w_h }$ are equivalent, i.e.
\begin{align}
\tnorm{ w_h } \lesssim \bnorm{ w_h } \lesssim \tnorm{ w_h }
\end{align}
for $h<h_0$, with $h_0$ sufficiently small.
\end{lemma}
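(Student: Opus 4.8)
The plan is to reduce the claimed norm equivalence to the single estimate
\begin{align}
\left| \tnorm{w_h}^2 - \bnorm{w_h}^2 \right| \lesssim h\, \bnorm{w_h}^2 \qquad \text{for } w_h \in W_h,
\end{align}
and then recover both inequalities by a kick-back argument valid for sufficiently small $h$. The point is that $\tnorm{w_h}^2$ and $\bnorm{w_h}^2$ are, term by term, built from paired expressions of exactly the type $(\mathcal{A} w_h, \mathcal{B} w_h)_{\Omega_\Gamma} - (\mathcal{A}_h w_h, \mathcal{B}_h w_h)_{\Omega_h}$ analyzed in the proof of estimate \eqref{adiff}, so essentially no new analysis is required.

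First I would match the three terms of \eqref{hnorm} against those of \eqref{bnorm}: the bulk term $\|\hlaplace w_h\|_{L^2(\hK)}^2$ against $\|\blaplace w_h\|_{L^2(\bK)}^2$, the averaged boundary term $h\|\left\lA\hlaplace w_h\right\rA\|_{L^2(\partial\hK)}^2$ against $h\|\left\lA\blaplace w_h\right\rA\|_{L^2(\partial\bK)}^2$, and the penalty term $h^{-1}\|\hEn\cdot\lJ\hgrad w_h\rJ\|_{L^2(\partial\hK)}^2$ against $h^{-1}\|\bEn\cdot\lJ\bgrad w_h\rJ\|_{L^2(\partial\bK)}^2$. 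Each of these is a squared $L^2$-norm, hence precisely the case $\mathcal{A}=\mathcal{B}$ and $\mathcal{A}_h=\mathcal{B}_h$ in \eqref{operatordiff}, with the same $h$-weights carried along. Consequently the decomposition into terms $I$--$IV$ from the proof of Lemma~\ref{lemma:adiff} applies directly, and the bounds \eqref{eq:gfdn98}, \eqref{eq:908ebn} and \eqref{eq:49t8u3} (the last two resting on the elementwise operator-difference estimates collected in Lemma~\ref{lemma:elmopdiff}) give $|\tnorm{w_h}^2 - \bnorm{w_h}^2| \lesssim h\,\|w_h\|_{H^*_h(\Gamma)}^2$. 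Invoking Lemma~\ref{lemma:techsmallerthan}, specifically \eqref{technormVh}, to pass from the technical norm to the energy norm, $\|w_h\|_{H^*_h(\Gamma)} \lesssim \bnorm{w_h}$, then yields the displayed estimate with a mesh-independent constant $C$.

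From this the upper bound $\tnorm{w_h}^2 \leq (1+Ch)\bnorm{w_h}^2$ is immediate, giving $\tnorm{w_h} \lesssim \bnorm{w_h}$. For the reverse direction I would rearrange the bound as $\bnorm{w_h}^2 \leq \tnorm{w_h}^2 + Ch\,\bnorm{w_h}^2$, i.e. $(1-Ch)\bnorm{w_h}^2 \leq \tnorm{w_h}^2$, and choose $h_0$ small enough that $Ch \leq \tfrac12$ for $h<h_0$, whence $\bnorm{w_h}^2 \lesssim \tnorm{w_h}^2$. Together these establish the equivalence.

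The main obstacle is not analytic but organizational: one must verify that the averaged boundary term $h\|\left\lA\blaplace w_h\right\rA\|_{L^2(\partial\bK)}^2$, whose operator/domain pair $\{\left\lA\blaplace\,\cdot\right\rA, \mathcal{E}_h\}$ already occurs in $a_h(\cdot,\cdot)$, is indeed covered by Lemma~\ref{lemma:elmopdiff}, and that the $h$ and $h^{-1}$ weights are preserved throughout the $I$--$IV$ decomposition. The only genuinely quantitative step is the kick-back, which is what forces the hypothesis ``for $h$ small enough'' appearing in the statement.
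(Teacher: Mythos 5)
Your proposal is correct, but it follows a genuinely different route from the paper's. The paper never compares the norms directly: it passes through the quadratic forms, writing $\tnorm{w_h}^2 \lesssim a_h(w_h,w_h) = \bigl(a_h(w_h,w_h) - a(w_h,w_h)\bigr) + a(w_h,w_h)$ and symmetrically for the other direction, and then invokes coercivity \eqref{coercivity}, \eqref{bcoercivity}, continuity \eqref{bcont}, \eqref{hcont}, the perturbation bound \eqref{adiff} combined with \eqref{technormVh}, and a kick-back --- treating Lemma~\ref{lemma:adiff} entirely as a black box. You instead perturb the norms themselves, term by term, by re-running the $I$--$IV$ decomposition and Lemma~\ref{lemma:elmopdiff} on the three operator/domain pairs that constitute $\tnorm{\cdot}$ and $\bnorm{\cdot}$. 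The price of your route is precisely the ``organizational'' point you flag: the pair in which $\lA \blaplace \cdot \rA$ is tested against itself with weight $h$ does not occur in $a(\cdot,\cdot)$ (there the average is always paired with the jump), so you cannot literally cite \eqref{eq:gfdn98}, \eqref{eq:908ebn}, \eqref{eq:49t8u3} as stated; you must observe that their proofs use only the measure bound \eqref{muest}, Cauchy--Schwarz, and Lemma~\ref{lemma:elmopdiff}, and that estimate \eqref{est:edgelapdiff} covers the average operator with exactly the weight $h^{1/2}$ appearing in the norms --- which it does, so the gap is one of bookkeeping, not substance. What your route buys in exchange is worth noting: since it never invokes coercivity, the equivalence holds for every value of the penalty parameter $\beta$ and with constants independent of $\beta$, whereas the paper's argument requires $\beta$ sufficiently large (the hypothesis of \eqref{coercivity} and \eqref{bcoercivity}) and its continuity constants in general depend on $\beta$; moreover, the statement being proved is itself $\beta$-independent, so your proof matches the statement's natural generality. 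What the paper's route buys is brevity: given the lemmas already established, it is a three-line argument.
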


\begin{proof}
The left inequality follows from coercivity \eqref{coercivity}, Lemma~\ref{lemma:adiff}, Lemma~\ref{lemma:techsmallerthan}, and continuity \eqref{bcont}
\begin{align}
\tnorm{ w_h }^2 &\lesssim a_h(w_h,w_h)
\\&= a_h(w_h,w_h) - a(w_h,w_h) + a(w_h,w_h)
\lesssim
h \bnorm{w_h}^2 + \bnorm{w_h}^2
\end{align}
for $h$ smaller than some $h_0$.
By coercivity \eqref{bcoercivity}, Lemma~\ref{lemma:adiff}, Lemma~\ref{lemma:techsmallerthan}, and continuity \eqref{hcont} we have
\begin{align}
\bnorm{ w_h }^2 &\lesssim a(w_h,w_h)
\\&= a(w_h,w_h) - a_h(w_h,w_h) + a_h(w_h,w_h)
\lesssim
h \bnorm{w_h}^2 + \tnorm{w_h}^2
\end{align}
and by a simple kick-back argument the right inequality follows for $h$ smaller than some $h_0$.
\end{proof}



\subsection{Main theorems} \label{section:apriorierrorest}

The foundation of the main proof is the first Strang lemma as given in \cite{Ciarlet2002}.
\begin{lemma}[First Strang lemma] \label{lemma:firststrang}
Consider a family of discrete problems for which the associated approximate bilinear forms are uniformly $W_h$-elliptic.
Then there exists a constant independent of the space $W_h$ such that
\begin{align}
\bnorm{ u - u_h } \lesssim 
\Bigg(&
\inf_{v_h \in W_h}
\left(
\bnorm{ u - v_h }
+
\sup_{w_h \in W_h} \frac{| a(v_h,w_h) - a_h(v_h,w_h) |}{\bnorm{ w_h }}
\right)
\\ \nonumber
&+ \sup_{w_h \in W_h} \frac{| l(w_h) - l_h(w_h) |}{\bnorm{ w_h }} \Bigg)
\end{align}
\end{lemma}

We now turn to presenting our main a priori error estimate.
\begin{theorem}[Error estimate in energy norm] \label{maintheorem}
Let $u$ be the exact solution  to $a(u,v)=l(v)$ and let $u_h$ be the finite element solution to the
approximate problem $a_h(u_h,v_h)=l_h(v_h)$ where $\beta$ in $a_h(\cdot,\cdot)$ is chosen sufficiently large for coercivity to hold (see Lemma \ref{lemma:basics}). For $h < h_0$, with $h_0$ small enough, the following error estimate holds
\begin{align}
\bnorm{ u - u_h } \lesssim h \| f \|_{L^2(\Gamma)}
\end{align}
\end{theorem}

\begin{proof}
By coercivity \eqref{bcoercivity}, continuity \eqref{bcont}, weak formulation \eqref{extendedweak}, and method formulation \eqref{approxweak}
the first Strang lemma above holds in our setting. Choosing $v_h = \pi u$ to handle the infimum yields an inequality with three independent terms
\begin{align}
\bnorm{ u - u_h } \lesssim I + II + III
\end{align}
where we will show that there exists constants independent of the meshsize $h$ such that the following estimates hold
\begin{align}
I &:=
\bnorm{ u - \pi u }
\lesssim h \| u \|_{H^3(\Gamma)}
\\
II &:=
\sup_{w_h \in W_h} \frac{| a(\pi u,w_h) - a_h(\pi u,w_h) |}{\bnorm{ w_h }}
\lesssim h \| u \|_{H^3(\Gamma)}
\\
III &:=
\sup_{w_h \in W_h} \frac{| l(w_h) - l_h(w_h) |}{\bnorm{ w_h }}
\lesssim h^2 \| f \|_{L^2(\Gamma)}
\end{align}
and recall that $\| u \|_{H^4(\Gamma)} \lesssim \| f \|_{L^2(\Gamma)}$ by the stability estimate \eqref{eq:stabest}.

\proofterm{Term I:}
This estimate directly follows from Lemma \ref{lemma:interpolation} (interpolation).

\proofterm{Term II:}
By Lemma~\ref{lemma:adiff} and Lemma~\ref{lemma:techsmallerthan} we have
$| a(\pi u,w_h) - a_h(\pi u,w_h) | \linebreak \lesssim h \bnorm{ \pi u } \, \bnorm{ w_h }$
yielding
\begin{align}
II \lesssim \frac{h \bnorm{ \pi u } \, \bnorm{ w_h }}{\bnorm{ w_h }}
\lesssim h \bnorm{ \pi u }
\leq h \left( \bnorm{ u - \pi u } + \bnorm{ u } \right) \lesssim h \| u \|_{H^3(\Gamma)}
\end{align}
where we use the triangle inequality, interpolation \eqref{binterp}, and  Lemma~\ref{lemma:techsmallerthan}.

\proofterm{Term III:}
This estimate directly follows from Lemma \ref{lemma:adiff}.
\end{proof}

Next we prove an a priori estimate in $L^2$ norm using a duality argument (Aubin--Nitsche's trick).
We assume that for all $\psi \in W$ with $(\psi,1)_\Gamma=0$ there is a $\phi \in V$ with $(\phi,1)_\Gamma=0$ such
that
\begin{align} \label{dualprob2}
a(v,\phi) = (v,\psi)_\Gamma  \qquad \text{for all $v\in W$}
\end{align}
for which the stability estimate \eqref{eq:stabest} holds, i.e.
\begin{align} \label{stabest2}
\| \phi \|_{H^4(\Gamma)} \lesssim \| \psi \|_{L^2(\Gamma)}
\end{align}
\begin{theorem}[Error estimate in $L^2$ norm] \label{lemma:L2Gamma}
Given the assumptions of Theorem \ref{maintheorem} and that the stability estimate \eqref{stabest2} holds
we for $h<h_0$, with $h_0$ small enough, have the following error estimate
\begin{align}
\| u - u_h \|_{L^2(\Gamma) / \mathbb{R}} &\lesssim h^2 \| f \|_{L^2(\Gamma)} \label{est:L2Gamma}
\end{align}
\end{theorem}
\begin{proof}
Let $v= \psi=u-u_h - \mathcal{P}_0^{\Gamma} (u-u_h)$. By \eqref{dualprob2} we then have
\begin{align}
\| u - u_h \|_{L^2(\Gamma) / \mathbb{R}}^2 
&=
a(u - u_h,\phi)
=
a(u - u_h,\phi - \pi\phi) + a(u - u_h,\pi\phi)
\label{eq:L2firstline}
\end{align}
where the first term
by continuity \eqref{bcont}, Theorem~\ref{maintheorem}, and interpolation \eqref{binterp}
is limited by
\begin{align}
a(u - u_h,\phi - \pi\phi)
&\leq
\bnorm{ u - u_h } \bnorm{ \phi - \pi\phi }
\lesssim
h^2 \| f \|_{L^2(\Gamma)} \| \phi \|_{H^3(\Gamma)}
\end{align}
For the second term in \eqref{eq:L2firstline} we by \eqref{extendedweak}, \eqref{approxweak}, and adding and subtracting terms have
\begin{align}
a(u - u_h,\pi\phi) &= l(\pi\phi) - a(u_h,\pi\phi)
\\&=
\left( l(\pi\phi)-l_h(\pi\phi) \right)
+  \left( a_h(u_h,\pi\phi) - a(u_h,\pi\phi) \right)
\label{eq:gkhsgh}
\end{align}
where we note that
\begin{align}
l(\pi\phi)-l_h(\pi\phi)
&\lesssim h^2 \| f \|_{L^2(\Gamma)} \bnorm{ \pi \phi }
\\&\leq
h^2 \| f \|_{L^2(\Gamma)} \left(\bnorm{ \phi } + \bnorm{ \phi - \pi\phi }\right)
\\&\lesssim
h^2 \| f \|_{L^2(\Gamma)} \| \phi \|_{H^3(\Gamma)}
\end{align}
through Lemma~\ref{lemma:adiff}, the triangle inequality and interpolation \eqref{binterp}.
Directly applying \eqref{adiff} of Lemma~\ref{lemma:adiff} also to the second term of \eqref{eq:gkhsgh}
only yields $\mathcal{O}(h)$ which is insufficient to prove this theorem.
Instead we shall utilize that $u_h$ and $\pi\phi$
are approximations to functions $u$ and $\phi$, both encompassing high regularity.
By adding and subtracting terms we rewrite the second term of \eqref{eq:gkhsgh} as
the sum of three terms
\begin{align}
a_h(u_h,\pi\phi) - a(u_h,\pi\phi)
&=
\left( a_h(u_h,\pi\phi - \phi) - a(u_h,\pi\phi - \phi) \right) \label{eq:termX1}
\\&\quad+
\left( a_h(u_h-u,\phi) - a(u_h-u,\phi) \right) \nonumber 
\\&\quad+
\left( a_h(u,\phi) - a(u,\phi) \right) \nonumber 
\\&= I + II + III
\end{align}
\proofterm{Term I:} By Lemma~\ref{lemma:adiff} and the interpolation estimate \eqref{xinterp} we have
\begin{align}
a_h(u_h,\pi\phi - \phi) - a(u_h,\pi\phi - \phi)
&\lesssim h
\left\| u_h \right\|_{H_h^*(\Gamma)}
\left\| \phi - \pi\phi \right\|_{H_h^*(\Gamma)}
\\&\lesssim h^2
\| u \|_{H^3(\Gamma)}
\| \phi \|_{H^3(\Gamma)}
\end{align}
where we in the last inequality use
\begin{align}
\| u_h \|_{H^*_h(\Gamma)}
\leq
\| u \|_{H^*_h(\Gamma)} + \| u - u_h \|_{H^*_h(\Gamma)}
\lesssim
\| u \|_{H^3(\Gamma)}
\end{align}
which follows from Lemma~\ref{lemma:techsmallerthan} and interpolation \eqref{xinterp}.

\proofterm{Term II:}
By Lemma~\ref{lemma:adiff} we have
\begin{align}
a_h(u_h - u,\phi) - a(u_h - u,\phi)
&\lesssim h
\left\| u - u_h \right\|_{H_h^*(\Gamma)}
\left\| \phi \right\|_{H_h^*(\Gamma)}
\\&\lesssim h^2
\| u \|_{H^3(\Gamma)}
\| \phi \|_{H^3(\Gamma)}
\end{align}
where $\left\| u - u_h \right\|_{H_h^*(\Gamma)}$ is limited using the triangle inequality, Lemma~\ref{lemma:techsmallerthan},  interpolation estimates, and Theorem~\ref{maintheorem} such that
\begin{align}
\left\| u - u_h \right\|_{H_h^*(\Gamma)}
&\leq
\left\| u - \pi u \right\|_{H_h^*(\Gamma)}
+
\left\| \pi u - u_h \right\|_{H_h^*(\Gamma)}
\\&\lesssim
h \| u \|_{H^3(\Gamma)}
+
\bnorm{\pi u - u_h}
\\&\leq
h \| u \|_{H^3(\Gamma)}
+
\bnorm{u - \pi u}
+
\bnorm{u - u_h}
\lesssim h \| u \|_{H^3(\Gamma)}
\end{align}

\proofterm{Term III:} The estimate
\begin{align}
a_h(u,\phi) - a(u,\phi)
\lesssim
h^2
\left\| u \right\|_{H^4(\Gamma)}
\left\| \phi \right\|_{H^4(\Gamma)}
\end{align}
follows directly from Lemma~\ref{lemma:adiff}. Collecting the above results and using
the stability estimate \eqref{eq:stabest}, i.e. $\| u \|_{H^4(\Gamma)} \lesssim \| f \|_{L^2(\Gamma)}$, yields the estimate
\begin{align}
\| u - u_h \|_{L^{2}(\Gamma)/\mathbb{R}}^2
&\lesssim h^2
\| f \|_{L^2(\Gamma)}
\left\| \phi \right\|_{H^4(\Gamma)}
\lesssim h^2
\| f \|_{L^2(\Gamma)}
\| \psi \|_{L^{2}(\Gamma)}
\end{align}
where we also use the stability estimate for the dual solution \eqref{stabest2}. Recalling that $\psi=u-u_h - \mathcal{P}_0^{\Gamma} (u-u_h)$
concludes the proof of the theorem.
\end{proof}

We now introduce the corresponding $L^2$ estimate on $\Gamma_h$ which is more practical as the
exact surface may be unknown or not easily integrated.
\begin{cor} \label{lemma:L2Gammah}
Given the assumptions of Theorem \ref{maintheorem} and that the stability estimate \eqref{stabest2} holds
we for $h < h_0$, with $h_0$ small enough, have the error estimate
\begin{align}
\| u - u_h \|_{L^2(\Gamma_h) / \mathbb{R}} &\lesssim h^2 \| f \|_{L^2(\Gamma)} \label{est:L2Gammah}
\end{align}
\end{cor}
\begin{proof}
By the triangle inequality we have
\begin{align}
\| u - u_h \|_{L^2(\Gamma_h) / \mathbb{R}}
&\leq
\| u - u_h - \mathcal{P}_0^\Gamma (u - u_h) \|_{L^2(\Gamma_h)}
\\&\quad+ \nonumber
\| (\mathcal{P}_0^\Gamma - \mathcal{P}_0^{\Gamma_h}) (u - u_h) \|_{L^2(\Gamma_h)}
\end{align}
where the first term after application of Lemma \ref{sobolevestimate} is limited
through Theorem~\ref{lemma:L2Gamma}.
For any function $w \in W$ we by the triangle inequality have
\begin{align}
\| (\mathcal{P}_0^\Gamma - \mathcal{P}_0^{\Gamma_h}) w \|_{L^2(\Gamma_h)}
&=
\left\|
|\Gamma |^{-1} (w,1)_\Gamma - |\Gamma_h |^{-1} (w,1)_{\Gamma_h}
 \right\|_{L^2(\Gamma_h)}
\\&\leq
\left\|
(|\Gamma |^{-1} - |\Gamma_h |^{-1}) (w,1)_\Gamma  \right\|_{L^2(\Gamma_h)}
\\&\quad \nonumber
+
\left\| |\Gamma_h |^{-1} ((1-\mu_h)w,1)_{\Gamma_h}
 \right\|_{L^2(\Gamma_h)}
\\&\lesssim h^2 \| w \|_{L^2(\Gamma_h)}
\end{align}
where we use $1-\frac{|\Gamma|}{|\Gamma_h|}=(1-\mu_h,1)_{\Gamma_h}$, $|\Gamma_h|\leq|\Gamma|$, and
the bound \eqref{muest} for $1-\mu_h$
in the last inequality.
Choosing $w=u-u_h$ gives
by the triangle inequality, $\mathcal{P}_0^{\Gamma_h} u_h = 0$, and Lemma \ref{sobolevestimate}
that
\begin{align}
\| u - u_h \|_{L^2(\Gamma_h)}
&\leq
\| u - u_h - \mathcal{P}_0^{\Gamma_h}(u - u_h) \|_{L^2(\Gamma_h)} + \| \mathcal{P}_0^{\Gamma_h} u \|_{L^2(\Gamma_h)}
\\&\leq
\| u - u_h \|_{L^2(\Gamma_h) / \mathbb{R}}
+
c \| u \|_{L^2(\Gamma)}
\end{align}
which after a simple kick-back argument and the stability estimate \eqref{eq:stabest} concludes the proof.
\qquad
\end{proof}

We now turn to our numerical experiments where we present convergence studies in $L^2(\Gamma_h)$ norm to confirm the estimate given by Corollary \ref{lemma:L2Gammah}.

\section{Numerical results} \label{section:numerics}


\subsection{Model problems}

For the numerical results we consider two problems with the same geometries and solutions as the model problems considered in \cite{Olshanskii2009} for the Laplace--Beltrami-problem. The two geometries and solutions are illustrated in Figure~\ref{fig:modelsol}. We analytically calculate the appropriate load functions for the biharmonic problem by inserting the prescribed solutions into the equation.

In the first model problem we consider a sphere with radius $r=1$.
The spherical coordinates $\{\theta,\phi\}$ for the sphere surface are defined such that the corresponding Cartesian coordinates are expressed
\begin{align}
\left\{ 
x = r \sin(\theta) \cos(\phi) \, , \
y = r \sin(\theta) \sin(\phi) \, , \
z = r \cos(\theta)
\right\}
\end{align}
with $0 \leq \theta < \pi$ and $0 \leq \phi \leq 2\pi$.
Given
$f = - 12 r^{-2} \sin(\phi) \sin(\theta)^3 (4 \sin(\phi)^2 - 3)$
we then have the analytical solution
$u = r^{-3} (3 x^2 y - y^3)$.
In the second model problem we consider a torus with $R=1$, $r=0.6$, with
toroidal coordinates $\{\theta,\phi\}$ for the torus surface defined such that the corresponding Cartesian coordinates are given by
\begin{align}
\left\{
x = (R+r \cos(\theta)) \cos(\phi) \, , \
y = (R+r \cos(\theta)) \sin(\phi) \, , \
z = r \sin(\theta)
\right\}
\end{align}
with $0 \leq \theta < 2\pi$ and $0 \leq \phi < 2\pi$.
Using $f$ defined through the {\sc Matlab}-code in Appendix \ref{app:matlab} we have
the analytical solution
$u = \sin(3 \phi) \cos(3\theta + \phi)$.

\begin{figure}
\centering
\subfigure[Sphere]{
\includegraphics[width=0.40\linewidth]{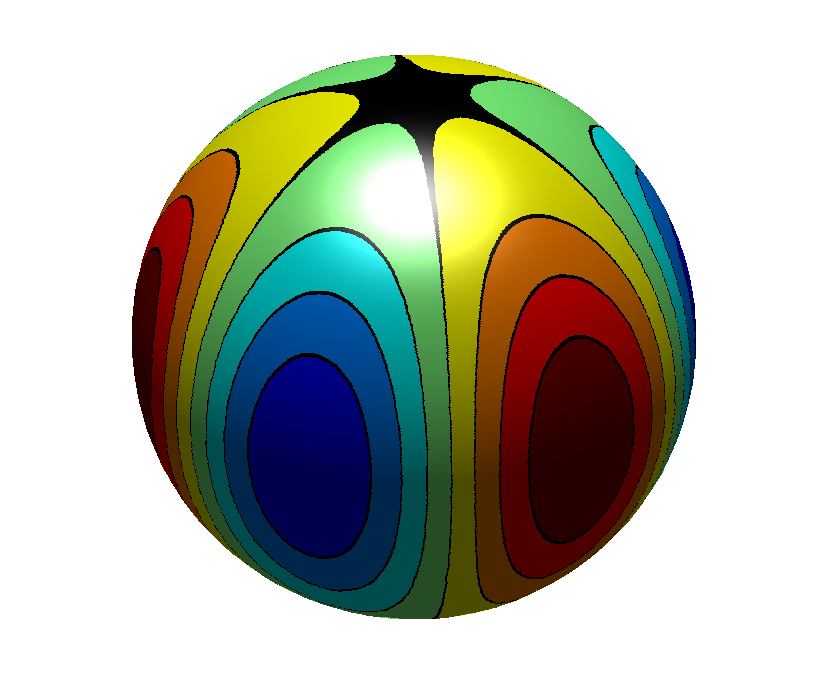}
\label{fig:subfig1}
}
\subfigure[Torus]{
\includegraphics[width=0.45\linewidth]{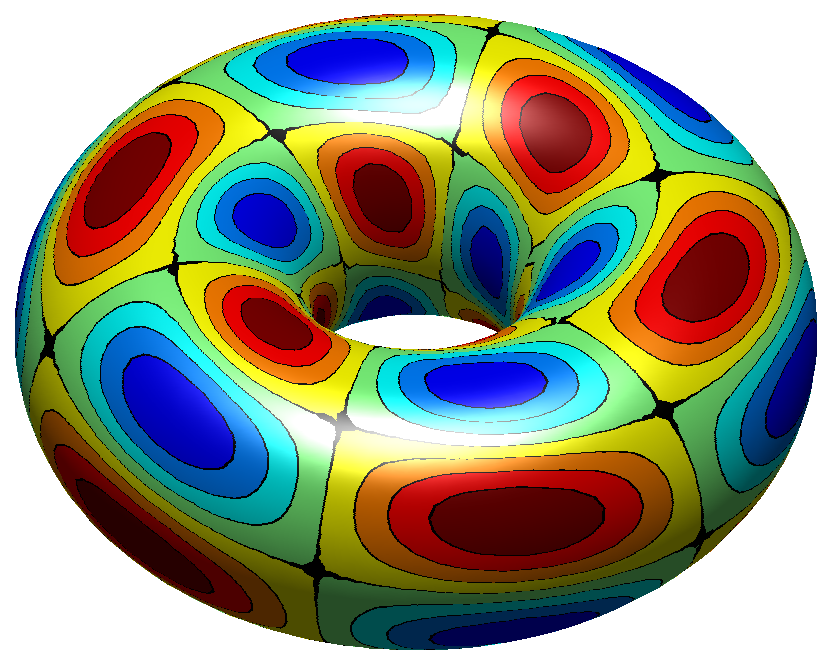}
\label{fig:subfig2}
}
\caption{Illustration of model problem solutions.}
\label{fig:modelsol}
\end{figure}


\subsection{Convergence}

For the convergence study on both model problems we used structured meshes illustrated in Figure~\ref{fig:examplemesh} and unstructured meshes illustrated in Figure~\ref{fig:unexamplemesh}. The unstructured meshed were created by random
perturbation of the vertices in the structured meshes.
Using a penalty parameter $\beta=10$, a choice which we motivate in the next section, we in Figure \ref{fig:conv} present the results from our convergence study in $L^2(\Gamma_h)$ norm for the two model problems on the structured meshes and in Figure~\ref{fig:conv2} the corresponing results on the unstructured meshes. The number of degrees of freedom in this study range from 0.8k to 190k in the sphere model problem and from 1.6k to 340k in the torus model problem, and the number of elements are approximately half of those numbers.

The results in figures \ref{fig:conv} and \ref{fig:conv2} indicate that the order of convergence is 2 in $L^2(\Gamma_h)$ norm which gives confirmation to the sharpness of the $L^2(\Gamma_h)$ estimate presented in Corollary~\ref{lemma:L2Gammah}.
We note more fluctuations in the torus model problem which we assume are due to the more complex
geometry to approximate and also a more complicated load function and analytical solution as suggested by the illustrations of the solutions in Figure~\ref{fig:modelsol}.

\begin{figure}
\centering
\subfigure[Sphere]{
\includegraphics[width=0.32\linewidth]{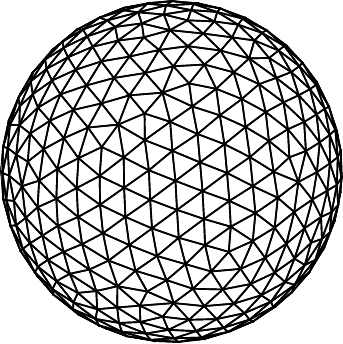}
\label{fig:subbfig1}
}
\quad
\subfigure[Torus]{
\includegraphics[width=0.50\linewidth]{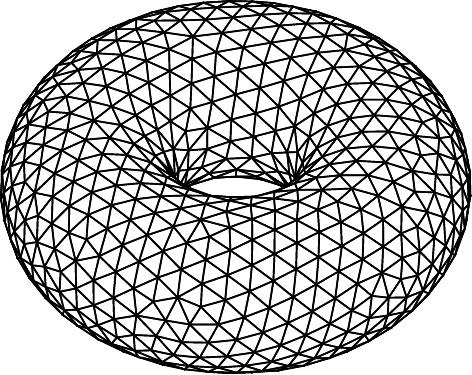}
\label{fig:subbfig2}
}
\caption{Strucutred meshes ($h=0.2$).}
\label{fig:examplemesh}
\end{figure}

\begin{figure}
\centering
\subfigure[Sphere]{
\includegraphics[width=0.32\linewidth]{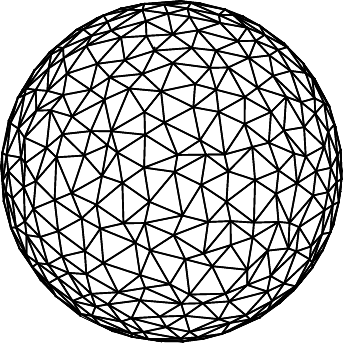}
\label{fig:subbfig3}
}
\quad
\subfigure[Torus]{
\includegraphics[width=0.50\linewidth]{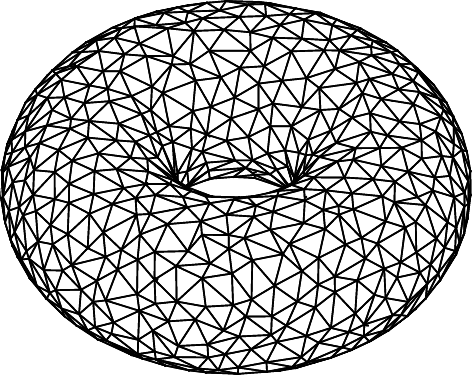}
\label{fig:subbfig4}
}
\caption{Unstructured meshes constructed by randomly moving triangle vertices ($h=0.2$).}
\label{fig:unexamplemesh}
\end{figure}

\begin{figure}
\centering
\subfigure[Sphere]{
   \includegraphics[width=0.45\linewidth]{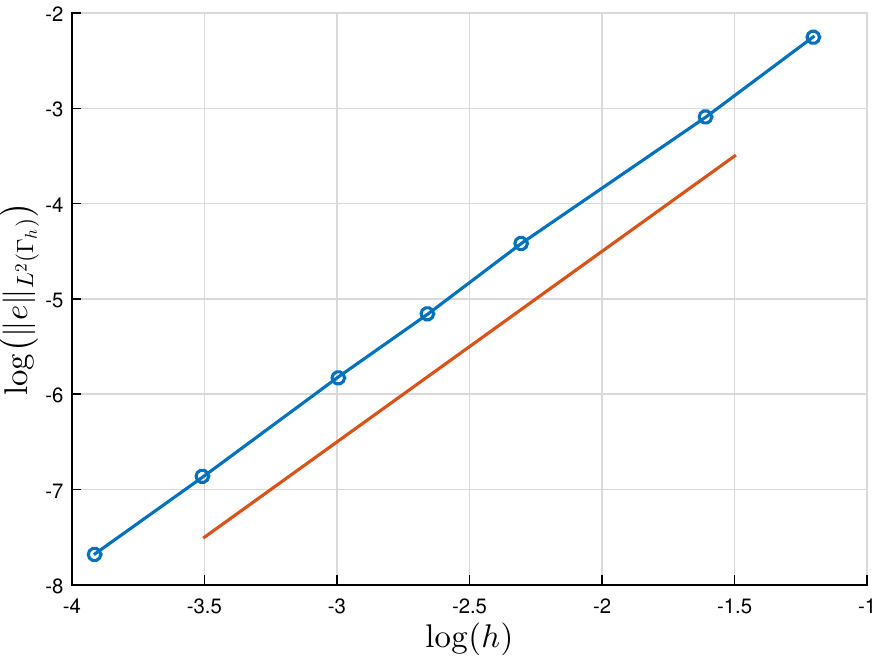}
   \label{fig:sphereconv}
 }
 \subfigure[Torus]{
   \includegraphics[width=0.45\linewidth]{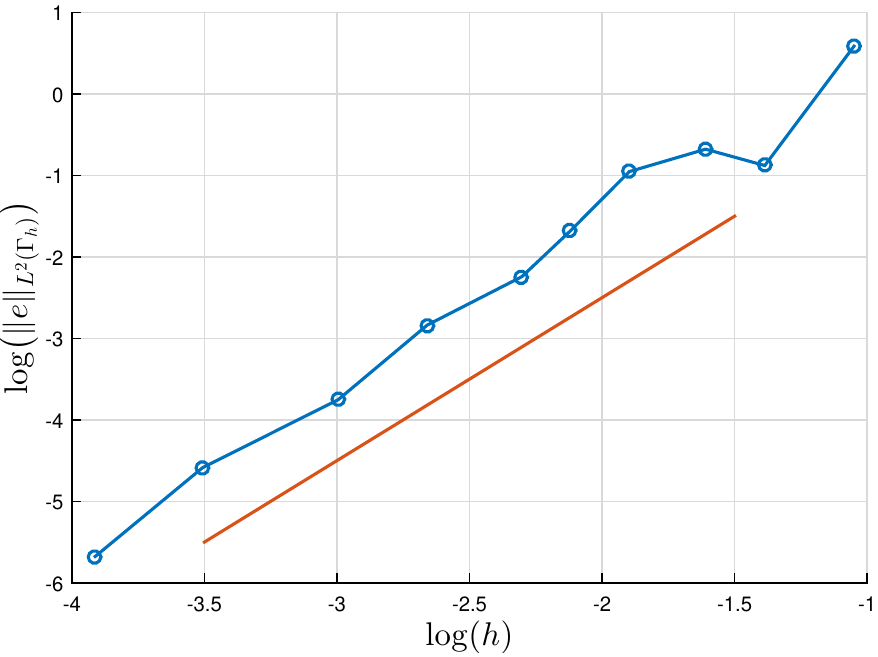}
   \label{fig:torusconv}
 }
\caption{Convergence studies in $L^2(\Gamma_h)$ norm for the two model problems using $\beta=10$. For comparison we include red reference lines with slope 2.}
\label{fig:conv}
\end{figure}

\begin{figure}
\centering
\subfigure[Sphere]{
   \includegraphics[width=0.45\linewidth]{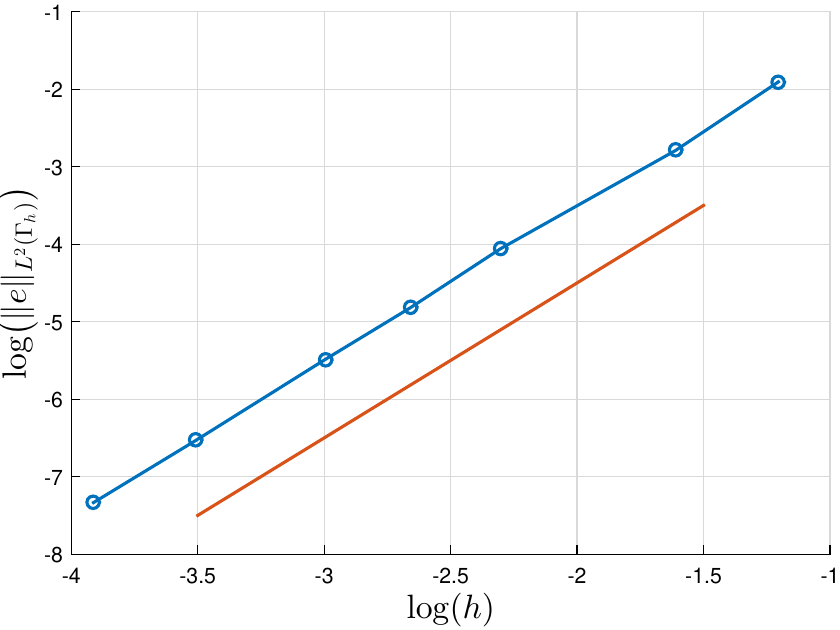}
   \label{fig:sphereconv2}
 }
 \subfigure[Torus]{
   \includegraphics[width=0.45\linewidth]{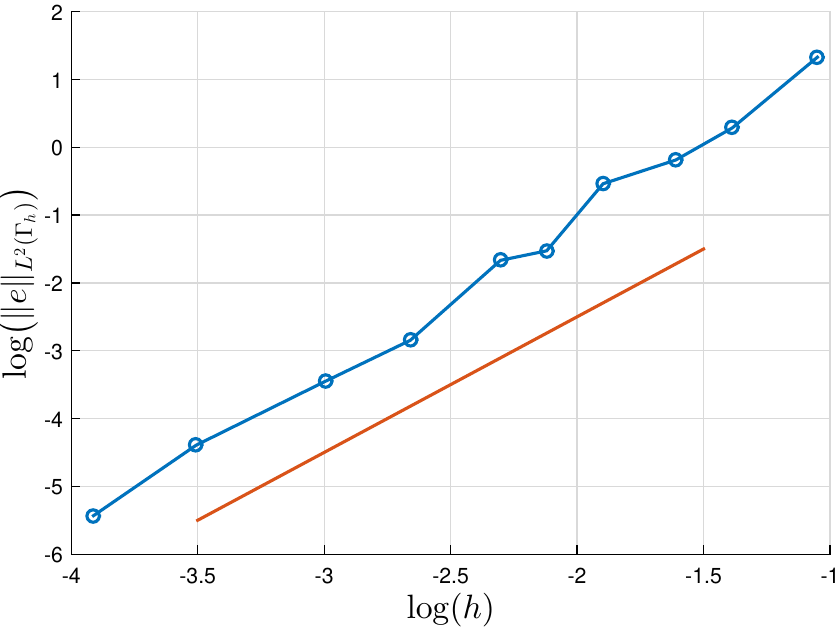}
   \label{fig:torusconv2}
 }
\caption{Convergence studies on unstructured meshes in $L^2(\Gamma_h)$ norm for the two model problems using $\beta=10$. For comparison we include red reference lines with slope~2.}
\label{fig:conv2}
\end{figure}

\subsection{Choice of penalty parameter $\boldsymbol\beta$}
In Figure~\ref{fig:betaconv} we study how the choice of the penalty parameter $\beta$ affects the convergence in the two model problems on unstructured meshes. The numerical study indicates that $\beta=10$ is a good choice. From the analysis we have that we must choose $\beta$ large enough for the error estimate to hold, and we also notice some instability for small $\beta$ in the numerical study. On the other hand, choosing $\beta$ too large will delay the asymptotic regime which is also seen in the numerical study.

By reviewing the analysis we also note that the proof of coercivity for the method (Lemma~\ref{lemma:basics}) does not depend on the local curvature of the surface as it is based on an inverse estimate \eqref{eq:inverseest} on the discrete surface $\Gamma_h$, which is locally flat.
Thus, the choice of $\beta$ does not directly depend on the local curvature of the problem.

\begin{figure}
\centering
\subfigure[Sphere]{
   \includegraphics[width=0.45\linewidth]{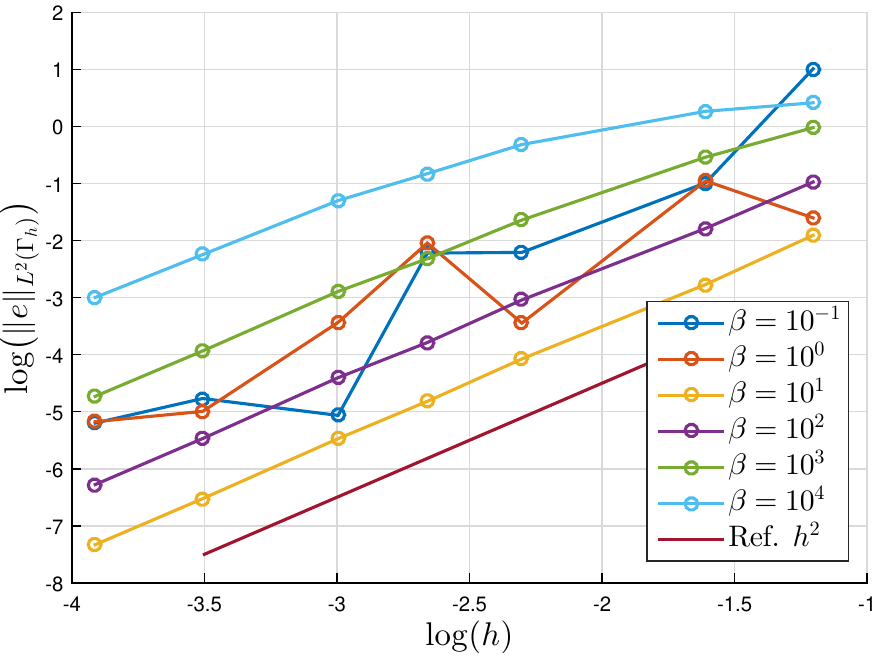}
   \label{fig:sphereconvbeta}
 }
 \subfigure[Torus]{
   \includegraphics[width=0.45\linewidth]{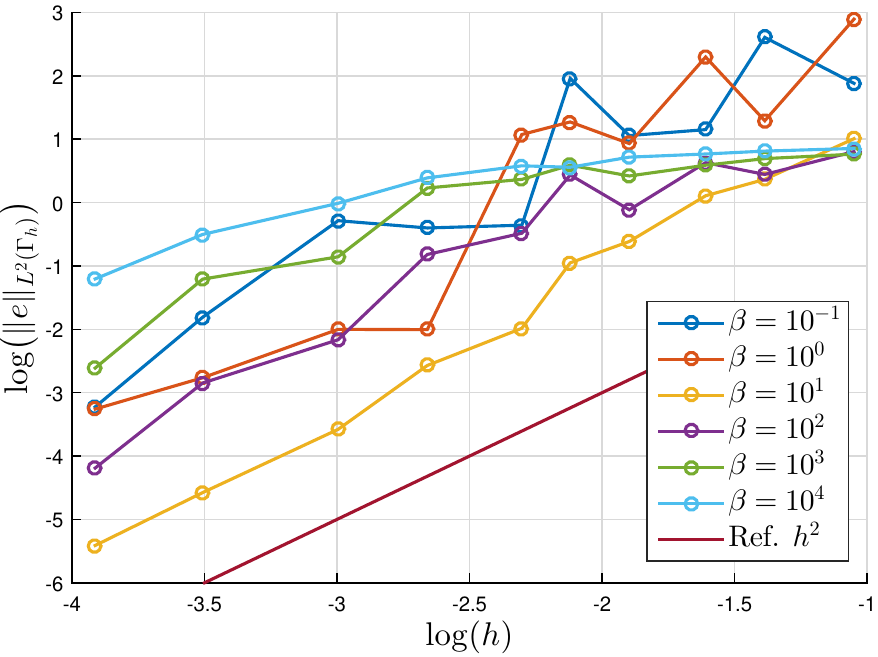}
   \label{fig:torusconvbeta}
 }
\caption{Numerical study on how the choice of $\beta$ affects convergence. Unstructured meshes are used and the error is measured in $L^2(\Gamma_h)$ norm.}
\label{fig:betaconv}
\end{figure}



\appendix

\section{Proof of Lemma \ref{lemma:normalinterpolation}} \label{geomproof}

\begin{proof}
Estimate \eqref{dest} follows from the definition of $d$ and a standard interpolation estimate
\begin{align}
\| d(\bfx) \|_{L^\infty(\hK)} = \| d(\bfx) - \pi_1 d(\bfx) \|_{L^\infty(\hK)} \leq 2 h^2 \| \kappa \|_{L^\infty(\hK)}
\lesssim h^2
\end{align}
where $\pi_1$ is the linear Lagrange interpolation operator on $K$, cf. \cite{claes}.

To prove \eqref{nest} we first note that for $\bfx\in\hK$ we have $\cpm(\bfx) - \bfx = \cpm(\bfx) - \pi_1 \cpm(\bfx)$. By standard interpolation estimates we for any unit vector $\bfb$ in the facet tangent plane have
\begin{align}
\| \cpm_\bfb(\bfx) - \bfb \|_{L^\infty(\hK)}
&=
\| (\bfb\cdot\rgrad) (\cpm - \bfx) \|_{L^\infty(\hK)}
=
\| (\bfb\cdot\rgrad) (\cpm - \pi_1\cpm) \|_{L^\infty(\hK)}
\lesssim h \label{tsomeest}
\end{align}
where $\cpm_\bfb=(\bfb\cdot\rgrad) \cpm$ and the constant in $\lesssim$ depends on $d$ and its derivatives. As
\begin{align}
\cpm_\bfb=(\bfb\cdot\rgrad) \cpm = (\rgrad\otimes\cpm)^T b = (\bP- d H) b
\label{eq:pbexpr}
\end{align}
we note that $| \cpm_\bfb |$ must be bounded from below and above independent of $h$ and we may prove estimate \eqref{nest} by letting $\bfa$ and $\bfb$ be orthogonal unit vectors in the facet tangent plane such that $\hn = \bfa \times \bfb$. Using the boundedness, the triangle inequality, and \eqref{tsomeest} we then get
\begin{align}
\| \bn - &\hn \|_{L^\infty(\hK)}
\lesssim \| \cpm_\bfa \times \cpm_\bfb - \bfa \times \bfb \|_{L^\infty(\hK)}
\\&=
\| (\cpm_\bfa - \bfa) \times (\cpm_\bfb - \bfb)
+ (\cpm_\bfa - \bfa) \times \bfb + \bfa \times (\cpm_\bfb - \bfb) \|_{L^\infty(\hK)}
\lesssim h
\end{align}
Now estimate \eqref{Pnhest} readily follows as
$
\| \hn - (\bn\cdot\hn)\bn \|_{L^\infty(\Gamma_h)}
\leq 2
\| \bn - \hn \|_{L^\infty(\Gamma_h)}
$
and analogously so does estimate \eqref{Phnest}.
Also, by noting that
$1-\bn\cdot\hn=\frac{1}{2}(\bn-\hn)\cdot(\bn-\hn)=\frac{1}{2}|\bn - \hn|^2$,
estimate \eqref{oneest} follows.

Similarly to the proof of \eqref{nest}, we may prove \eqref{test} by letting $\bfb$ be a unit tangent vector to a facet edge $E$. By writing $\bEn\nf$ and $\hEn^\nf$ as the cross product of orthogonal vectors, using the boundedness of $|p_b|$ and the triangle inequality, we have
\begin{align}
&\| \bEn^\nf - \bP \hEn^\nf \|_{L^\infty(E)}
\lesssim
\| \cpm_\bfb \times \bn - \bP (\bfb \times \hn^\nf) \|_{L^\infty(E)}
\\&\qquad\qquad=
\| (\cpm_\bfb - \bP \bfb) \times \bn + \bP\bfb \times \bn - \bP ( \bfb \times \hn^\nf) \|_{L^\infty(E)}
\\&\qquad\qquad\lesssim
\left(
\| \cpm_\bfb - \bP \bfb \|_{L^\infty(E)} + \| \bP\bfb \times \bn - \bP ( \bfb \times \hn^\nf) \|_{L^\infty(E)}
\right)
\end{align}
Due to \eqref{eq:pbexpr} and \eqref{dest} we have $\| \cpm_\bfb - \bP \bfb \|_{L^\infty(E)} \lesssim h^2$.
Note that we may write
\begin{align}
\bP ( \bfb \times \hn^\nf) =   \bP\bfb \times (\bn\otimes\bn)\hn^\nf + (\bn\otimes\bn)\bfb \times \bP\hn^\nf
\end{align}
and thus we have
\begin{align}
\nonumber
&\| \bP\bfb \times \bn - \bP ( \bfb \times \hn^\nf) \|_{L^\infty(E)}
\\&\qquad\lesssim \left(
\| \bP\bfb \times (\bn - (\bn\otimes\bn)\hn^\nf) \|_{L^\infty(E)}
+
\| (\bn\otimes\bn)\bfb \times \bP\hn^\nf \|_{L^\infty(E)}
\right)
\\&\qquad\lesssim \left(
\| 1 - \bn\cdot\hn^\nf \|_{L^\infty(E)}
+
\| p_b -\bfb \|_{L^\infty(E)} \| \bP\hn^\nf \|_{L^\infty(E)}
\right)
\lesssim h^2
\end{align}
where we use that $\bn\cdot p_b = 0$ in the second last inequality and estimates
\eqref{oneest}, \eqref{tsomeest}, and \eqref{Pnhest} in the last inequality.
We have thus shown estimate \eqref{test}
and this concludes the proof.
\qquad
\end{proof}


\section{Tangential derivatives and proof of Lemma \ref{sobolevestimate}} \label{sobolevproof}
Before turning to the actual proof of Lemma~\ref{sobolevestimate} we first give some preliminary results
which will also be used in the proof of Lemma~\ref{lemma:adiff}.

\subsection{Tangential derivatives of extended functions in $n$-mode notation} \label{sect:tanglift}
Using the $n$-mode product \cite[Sect. 2.5]{tensor2009} briefly presented for tensor-matrix multiplications in Section~\ref{sect:surfsobolev} we may also express
tensor-vector multiplications. Componentwise the $n$-mode multiplication between a $k$:th order tensor $T\in\mathbb{R}^{3\times \cdots \times 3}$ and a vector $a\in \mathbb{R}^3$ is defined as
\begin{align}
\left(T \bar{\times}_n a\right)_{i_1 \cdots i_{n-1} i_{n+1} \cdots i_k}
= \sum_{j = 1}^3 T_{i_1 \cdots i_{n-1} j i_{n+1} \cdots i_k} a_{j}
\label{eq:nmodevector}
\end{align}
for $i_1, \cdots, i_{n-1}, i_{n+1}, \cdots ,i_k \in \{1,2,3\}$
which results in a $(k-1)$:th order tensor.
We use the special notation $\bar{\times}_n$ for tensor-vector $n$-mode products as the
order in which tensor-vector products are evaluated matter
\begin{align}
T \bar{\times}_m a \bar{\times}_n b
=
(T \bar{\times}_m a) \bar{\times}_{n-1} b
=
(T \bar{\times}_n b) \bar{\times}_{m} a
\quad \text{for $a,b\in\mathbb{R}^3$ and $m<n$}
\end{align}
while tensor-matrix $n$-mode products ${\times}_n$ are independent of the order
\begin{align}
T {\times}_m A {\times}_n B
=
(T {\times}_m A) {\times}_{n} B
=
(T {\times}_n B) {\times}_{m} A
\quad \text{for $A,B\in\mathbb{R}^{3\times 3}$ and $m\neq n$}
\end{align}
Using this notation we are able to explicitly state the first three
tensors of tangential derivatives in $U$ for extended functions, as presented below.

For an extended tensor $T^\ell$ of order $k$ we by the chain rule have the identity
\begin{align}
T^\ell \otimes \leftgrad
=
(T \circ \cpm) \otimes \leftgrad
=
(T \otimes \leftgrad)\circ \cpm \times_{k+1} (\nabla \otimes \cpm)
=
(T \otimes \leftgrad)^\ell \times_{k+1} B
\label{eq:chainrule}
\end{align}
for $x \in U$, where $B := \nabla \otimes \cpm = \bP(x) - d(x) H(x)$. Note that $B$ is tangential, i.e. $B=PB=BP=PBP$.

By this identity we have that, for $x \in U$, the first order tangential derivatives of an extended function may be expressed as
\begin{align}
D_\Gamma w^\ell &:=
\left\lproj
w^\ell \otimes \leftgrad
\right\rbproj
=
( D_\Gamma w )^\ell \times_1 B = B \, \bar{\times}_2 \, ( D_\Gamma w )^\ell
\label{eq:firstorderderext}
\end{align}
Again applying \eqref{eq:chainrule} gives
\begin{align}
D_\Gamma w^\ell \otimes \leftgrad &=
\left( (D_\Gamma w)^\ell \otimes \leftgrad \right) \times_1 B + \left( B \otimes\leftgrad \right)  \, \bar{\times}_2 \, ( D_\Gamma w )^\ell
\\&=
\left( D_\Gamma w \otimes \leftgrad \right)^\ell \times_1 B \times_2 B + \left(B \otimes\leftgrad \right)  \, \bar{\times}_2 \, ( D_\Gamma w )^\ell 
\\&=
\left( D_\Gamma^2 w \right)^\ell \times_1 B \times_2 B
+
\left( B \otimes\leftgrad \right) \, \bar{\times}_2 \, ( D_\Gamma w )^\ell
\end{align}
and thus we conclude that, for $x\in U$, the tensor of second order tangential derivatives of an extended function can be written
\begin{align}
D_\Gamma^2 w^\ell &:=
\left\lproj
D_\Gamma w^\ell \otimes \leftgrad
\right\rbproj
=
( D_\Gamma^2 w )^\ell \times_1 B \times_2 B
+
D_\Gamma(B) \, \bar{\times}_2 \, ( D_\Gamma w )^\ell
\end{align}
For completeness we also express the tensor of third order tangential derivatives
\begin{align}
D_\Gamma^3 w^\ell &=
( D_\Gamma^3 w )^\ell \times_1 B \times_2 B \times_3 B
\\&\quad+ \nonumber
D_\Gamma(B) \times_1 ( D_\Gamma^2 w )^\ell \times_1 B
+
2 D_\Gamma(B) \times_2 ( D_\Gamma^2 w )^\ell \times_2 B
\\&\quad+ \nonumber
D^2_\Gamma(B) \, \bar{\times}_2 \, ( D_\Gamma w )^\ell
\end{align}

Repeating these calculations for the tangential derivatives on the discrete surface of extended functions gives that the first order derivatives may be expressed
\begin{align}
D_{\Gamma_h} w^\ell &:=
\left\lproj
w^\ell \otimes \leftgrad
\right\rhproj
=
\left\lproj
( D_\Gamma w )^\ell \times_1 B
\right\rhproj
\label{eq:kgujds}
\\&=
( D_\Gamma w )^\ell \times_1 B \times_1 \hP = B \, \bar{\times}_2 \, ( D_\Gamma w )^\ell \times_1 \hP
\label{eq:approxT1}
\end{align}
and the tensor of second order derivatives may be expressed
\begin{align}
D_{\Gamma_h}^2 w^\ell &:=
\left\lproj
D_{\Gamma_h} w^\ell \otimes \leftgrad
\right\rhproj
\\&=
\left\lproj
\left( (D_\Gamma w)^\ell \otimes \leftgrad \right) \times_1 B + \left( B \otimes\leftgrad \right) \, \bar{\times}_2 \, ( D_\Gamma w )^\ell
\right\rhproj
\\&=
\left\lproj
( D_\Gamma w \otimes \leftgrad )^\ell \times_1 B \times_2 B + \left( B \otimes\leftgrad \right)  \, \bar{\times}_2 \, ( D_\Gamma w )^\ell
\right\rhproj
\\&=\label{eq:idusvn2}
\left\lproj
(D^2_\Gamma w )^\ell \times_1 B \times_2 B + \left( B \otimes\leftgrad \right) \, \bar{\times}_2 \, ( D_\Gamma w )^\ell
\right\rhproj
\end{align}
when $x \in U$. As $B=P - dH$ we may rewrite the last term
\begin{align}
&\left\lproj
\left( B \otimes\leftgrad \right) \, \bar{\times}_2 \, ( D_\Gamma w )^\ell
\right\rhproj
= \label{eq:approxTend}
\\&\qquad\quad \nonumber
\left\lproj
- \bn\otimes H( D_\Gamma w )^\ell
- H( D_\Gamma w )^\ell \otimes\bn
-d\left( H \otimes\leftgrad \right) \, \bar{\times}_2 \, ( D_\Gamma w )^\ell
\right\rhproj
\end{align}

\subsection{Proof of Lemma \ref{sobolevestimate}}

\begin{proof}
Let $\mu_h$ be defined as in \S\ref{section:geomapprox}.
By \eqref{integralmeasure2} we for $x\in\Gamma_h$ clearly have $c\geq 1$ such that
$
0 < \frac{1}{c} \leq \mu_h(x) \leq c < \infty
$
and due to how we define our extension to $U$ estimate \eqref{aa1} follows.
Estimate \eqref{aa2} follows from \eqref{aa3} and \eqref{aa4} and thus we turn to proving these two estimates.

\proofterm{Estimate (\ref{aa3}):}
Consider a $k$:th order tensor $T_{\Gamma}$ tangential to $\Gamma$, i.e. $T_{\Gamma} = \left\lproj T_{\Gamma} \right\rbproj$. For the extended tensor $T_{\Gamma}^\ell$  it by the product rule holds that
\begin{align}
\label{eq:v9g8fhnb}
T_{\Gamma}^\ell \otimes \leftgrad
 &=
\lproj T_{\Gamma}^\ell \rbproj \otimes \leftgrad
\\&=
\lproj T_{\Gamma}^\ell \otimes \leftgrad \rbproj
+
T_{\Gamma}^\ell \times \text{[derivatives on projections]}
\\&= \label{eq:ogijsd}
\lproj (T_{\Gamma} \otimes \leftgrad)^\ell \times_{k+1} B \rbproj
+
T_{\Gamma}^\ell \times \text{[derivatives on projections]}
\\&=
\lproj (T_{\Gamma} \otimes \leftgrad)^\ell \rbproj  \times_{k+1} B
+
T_{\Gamma}^\ell \times \text{[derivatives on projections]}
\\&=
\lproj T_{\Gamma} \otimes \leftgrad \rbproj^\ell  \times_{k+1} B
+
T_{\Gamma}^\ell \times \text{[derivatives on projections]}
\\&= \left(D_{\Gamma} T_{\Gamma} \right)^\ell \times_{k+1} B
+
T_{\Gamma}^\ell \times \text{[derivatives on projections]}
\label{eq:h0bds8f9h}
\end{align}
where we in \eqref{eq:ogijsd} use \eqref{eq:chainrule} and in the last equality use \eqref{eq:tensorderivative}.
Note that while we have not formally defined the tensor-tensor multiplication indicated by $\times$ in the above expression, it is sufficient for our purposes to acknowledge that this tensor-tensor multiplication is a linear operation such that the product rule of differentiation holds.
Now assume that
\begin{align}
\label{eq:inductionassumption}
D_{\Gamma_h}^{k} w^\ell = \sum_{m=1}^{k} ( D_\Gamma^m w )^\ell \times T_m
\end{align}
where $T_m$ is a tensor of derivatives (of various orders) on projections and other geometrical quantities (such as $B$), i.e. we assume $D_{\Gamma_h}^{k} w^\ell$ can be expressed as a linear combination of $( D_\Gamma^m w )^\ell$, with $m=1,\dots,k$.
Then by \eqref{eq:Dhk} we have
\begin{align}
\label{eq:kuggksfjn}
D_{\Gamma_h}^{k+1} w^\ell &:=
\lproj D_{\Gamma_h}^{k} w^\ell \otimes \leftgrad \rhproj
\\&=
\left\lproj \Biggl( \sum_{m=1}^{k} ( D_\Gamma^m w )^\ell \times T_m \Biggr) \otimes \leftgrad \right\rhproj
\\&=
\sum_{m=1}^{k} \left\lproj \left(( D_\Gamma^m w )^\ell \otimes \leftgrad \right) \times T_m 
+ ( D_\Gamma^m w )^\ell \times (T_m \otimes \leftgrad)
 \right\rhproj
\label{eq:kuggksfjn2}
\end{align}
Now from calculation \eqref{eq:v9g8fhnb}--\eqref{eq:h0bds8f9h} we can deduce that
$( D_\Gamma^m w )^\ell \otimes \leftgrad$ can be expressed as a linear combination of $( D_\Gamma^{m+1} w )^\ell$ and $( D_\Gamma^m w )^\ell$ and in turn by \eqref{eq:kuggksfjn}--\eqref{eq:kuggksfjn2} we have that $D_{\Gamma_h}^{k+1} w^\ell$
can be expressed as a linear combination of $( D_\Gamma^m w )^\ell$, $m=1,\dots,k+1$.
By \eqref{eq:kgujds}--\eqref{eq:approxT1} the assumption \eqref{eq:inductionassumption} holds for $k=1$
and thus by induction it will hold for any integer $k \geq 1$.

Estimate \eqref{aa3} is now established by taking the (broken) $L^2(\Gamma_h)$ norm on $D_{\Gamma_h}^{k} w^\ell$, writing $D_{\Gamma_h}^{k} w^\ell$ as a linear combination of $( D_\Gamma^m w )^\ell$, where $m=1,\dots,k$, applying the triangle inequality and finally using \eqref{aa1}.

\proofterm{Estimate (\ref{aa4}):}
As established in the proof of estimate \eqref{aa4}, $D_{\Gamma_h}^{k} w^\ell$ may
be expressed as a linear combination of $( D_\Gamma^m w )^\ell$, where $m=1,\dots,k$.
We decompose $D_{\Gamma_h}^{k} w^\ell$ into two terms
$D_{\Gamma_h}^{k} w^\ell = I + II$
where $I$ contain the highest order derivative $( D_\Gamma^k w )^\ell$ and
$II$ contain all lower order derivatives.
This proof will be based on an induction argument and we make the assumption that estimate \eqref{aa4} holds for $k-1$, i.e. that
\begin{align}
| v |_{H^{k-1}_h(\Gamma)} & \lesssim \sum_{m=1}^{k-1} | v |_{H^m_h(\Gamma_h)}
\label{eq:assmjfj}
\end{align}
Clearly, we then for term $II$ have
$\| II \|_{L^2(\Gamma_h)}
\lesssim \sum_{m=1}^{k-1} | v |_{H^m_h(\Gamma)}
\lesssim \sum_{m=1}^{k-1} | v |_{H^m_h(\Gamma_h)}$.
By repeated derivation, as for example in \eqref{eq:kgujds}--\eqref{eq:idusvn2}, we readily see that the term with highest order derivatives can be written
\begin{align}
I &=
\left\lproj
\left( D_{\Gamma}^{k} w \right)^\ell \times_1 B \times_2 \cdots \times_{k} B
\right\rhproj
=
\left( D_{\Gamma}^{k} w \right)^\ell \times_1 \widehat{B} \times_2 \cdots  \times_{k} \widehat{B}
\label{eq:dBBB}
\end{align}
where $\widehat{B} := \hP \left( \bP - d H \right) = \bP - \hn \otimes (\bP\cdot\hn) - d \hP H$. Using this last expression for $\widehat{B}$ and expanding the product we may also decompose $I$ into two terms
\begin{align}
I = I_1 + I_2 = ( D_\Gamma^k w )^\ell + I_2
\end{align}
where we note that each term in $I_2$ must contain $P\cdot \hn$ or $d$ as a factor.
Thus, by Lemma \ref{lemma:normalinterpolation} we can deduce that
\begin{align}
\| I_2 \|_{L^2(\Gamma_h)}
\lesssim h \| ( D_\Gamma^k w )^\ell \|_{L^2(\Gamma_h)}
\lesssim h | w |_{H_h^k(\Gamma)}
\end{align}
where we use estimate \eqref{aa1} in the last inequality.
By estimate \eqref{aa1}, the identity $( D_\Gamma^k w )^\ell = D_{\Gamma_h}^{k} w^\ell  - II - I_2$, and the triangle inequality we then have
\begin{align}
| w |_{H_h^k(\Gamma)}
&\lesssim \| ( D_\Gamma^k w )^\ell \|_{L^2(\Gamma_h)}
\\&
\leq \| D_{\Gamma_h}^{k} w^\ell \|_{L^2(\Gamma_h)}
+ \| II \|_{L^2(\Gamma_h)}
+ \| I_2 \|_{L^2(\Gamma_h)}
\\
&\lesssim | w |_{H_h^k(\Gamma_h)} + \sum_{m=1}^{k-1} | w |_{H_h^m(\Gamma_h)} + h | w |_{H_h^k(\Gamma)}
\end{align}
We can then, under the assumption of \eqref{eq:assmjfj}, establish the estimate using a kick-back argument with the last term. To complete the inductive proof we must establish that the assumption \eqref{eq:assmjfj} holds for $k=2$, or equivalently that estimate \eqref{aa4} holds for $k=1$. In this special case we note that there will be no term $II$ and thus the proof above is complete without using assumption \eqref{eq:assmjfj}. Thus, by induction this completes the proof of estimate \eqref{aa4}.
\end{proof}


\section{Estimates needed in the proof of Lemma~\ref{lemma:adiff}} \label{appendix:Cx}

Reviewing the expressions for the approximate tangential derivatives in Section \ref{sect:tanglift}, i.e. \eqref{eq:approxT1}--\eqref{eq:approxTend},
we deduce that for $x \in \Gamma_h$ we can more explicitly express the first and second order approximate derivatives in terms of the exact operators on $\Gamma$ as
\begin{align}
D_{\Gamma_h} w_h &= \bP_h B(\bfx) (D_\Gamma w_h)^\ell \label{eq:Xa}
\\
D^2_{\Gamma_h} w_h
&=
\left\lproj
(D_\Gamma^2 w_h)^\ell - d \left(H(D_\Gamma^2 w_h)^\ell + (D_\Gamma^2 w_h)^\ell H \right) + d^2 H(D_\Gamma w_h)^\ell H \right\rhproj
\\ \nonumber
&\quad-
\left\lproj
\bn\otimes H(D_\Gamma w_h)^\ell + H(D_\Gamma w_h)^\ell \otimes\bn
+ d\left( H\otimes\leftgrad \right) \bar{\times}_2 (D_\Gamma w_h)^\ell
\right\rhproj
\end{align}
By the trace property
$\trace\left((a\otimes b) A\right) = \trace\left( A (a\otimes b) \right) = a\cdot A^T  b$
for vectors $a,b$ and matrix $A$ we clearly have $\trace\left( \hP B (\hn \otimes\hn \right))=0$
for any $3\times 3$-matrix $B$.
Using this property and the above relations we may express the approximate Laplace--Beltrami operator
$\hlaplace w_h
:=\trace\left( D^2_{\Gamma_h} w_h^\ell \right)$ as
\begin{align}
\hlaplace w_h
&=
\left( \blaplace w_h \right)^\ell - \hn\cdot\left( D^2_{\Gamma} w_h \right)^\ell\cdot\hn
\label{eq:lapexpl1}
\\&\quad \nonumber
- d \left( 2\,\trace\left( H \left( D^2_{\Gamma} w_h \right)^\ell \right)
- 2\hn\cdot\left( H \left( D^2_{\Gamma} w_h \right)^\ell \right)\cdot\hn \right)
\\&\quad \nonumber
+ d^2 \left( \trace\left( H \left( D^2_{\Gamma} w_h \right)^\ell H \right)
- \hn\cdot\left( H \left( D^2_{\Gamma} w_h \right)^\ell H \right)\cdot\hn \right)
\\&\quad \nonumber
+ 2 (\bn\cdot\hn) \hn \cdot H \left(D_\Gamma w_h \right)^\ell
- d \, \trace\left( \hP\left( \left( H\otimes\leftgrad \right) \bar{\times}_2 (D_\Gamma w_h)^\ell \right) \right) 
\end{align}
where we note that all higher order terms except $\left( \blaplace w_h \right)^\ell$ scale with at least $h^2$ while the two lower order terms, i.e. the last line above, scale as $h$ and $h^2$ respectively.


\begin{lemma} \label{lemma:elmopdiff}
For $w \in W$ the following inequalities hold
\begin{align}
\left\|
\left( \blaplace w \right)^\ell
-  \hlaplace w^\ell 
\right\|_{L^2(\Gamma_{h})}
&\lesssim h \| w \|_{H_h^*(\Gamma)} \label{est:lapdiff}
\\
h^{-1/2} \left\|
\left(\bEn \cdot \lJ \bgrad w \rJ \right)^\ell
- \hEn \cdot \lJ \hgrad w^\ell \rJ
\right\|_{L^2(\mathcal{E}_{h})}
&\lesssim h \| w \|_{H_h^*(\Gamma)} \label{est:edgegraddiff}
\\
h^{1/2} \left\|
\left(\left\lA \blaplace w \right\rA\right)^\ell
- \left\lA \hlaplace w^\ell \right\rA
\right\|_{L^2(\mathcal{E}_{h})}
&\lesssim h \| w \|_{H_h^*(\Gamma)} \label{est:edgelapdiff}
\end{align}
\end{lemma}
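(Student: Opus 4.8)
The plan is to treat all three estimates as pointwise comparisons of the approximate operators with the lifted exact operators, using the explicit expansions derived in this appendix, and then to convert the resulting $L^2$-bounds on $\Gamma_h$ (or on $\mathcal{E}_h$) into the constituent semi-norms of $\| \cdot \|_{H_h^*}$ via Lemma~\ref{sobolevestimate} and the trace inequalities. First I would establish \eqref{est:lapdiff}. Subtracting $(\blaplace w)^\ell$ from the expansion \eqref{eq:lapexpl1}--\eqref{eq:lapexplEnd} of $\hlaplace w^\ell$ leaves exactly the correction terms, each of which carries either an explicit factor of $d$ or a factor that I will argue is $O(h)$. Since $\| d \|_{L^\infty(\Gamma_h)} \lesssim h^2$ by \eqref{dest}, every term containing $d$ (respectively $d^2$) is bounded by $h^2$ times a second-order (respectively first-order) tangential derivative of $w$, and the derivative-of-$H$ factors are uniformly bounded by assumption \eqref{dcrit}.

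The crucial observation, and the source of the $O(h)$ rate, is how the two remaining $d$-free terms are handled. Because $D^2_\Gamma w$ is tangential in both slots, its extension satisfies $\bn \cdot (D^2_\Gamma w)^\ell = 0$, so
\begin{align}
\hn \cdot (D^2_\Gamma w)^\ell \cdot \hn = (\hn - \bn) \cdot (D^2_\Gamma w)^\ell \cdot (\hn - \bn),
\end{align}
which by \eqref{nest} is $O(h^2)\,|(D^2_\Gamma w)^\ell|$. Likewise, since $\bn$ spans the kernel of $H = \nabla^2 d$ we have $\bn^T H = 0$, whence $\hn^T H = (\hn - \bn)^T H$ and the term $2(\bn \cdot \hn)\, \hn \cdot H (D_\Gamma w)^\ell$ is $O(h)\,|(D_\Gamma w)^\ell|$ by \eqref{nest}. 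Collecting, the pointwise bound
\begin{align}
\left| (\blaplace w)^\ell - \hlaplace w^\ell \right| \lesssim h \, |(D_\Gamma w)^\ell| + h^2 \, |(D^2_\Gamma w)^\ell|
\end{align}
holds on $\Gamma_h$; integrating over $\Gamma_h$, passing to $\Gamma$ by \eqref{aa1} of Lemma~\ref{sobolevestimate}, and bounding $\| (\bgrad w)^\ell \|_{L^2(\Gamma)}$ by $|w|_{H^1(\Gamma)}$ and $\| (D^2_\Gamma w)^\ell \|_{L^2(\Gamma)}$ by $|w|_{H_h^2(\Gamma)}$ yields $h |w|_{H^1(\Gamma)} + h^2 |w|_{H_h^2(\Gamma)} \lesssim h \| w \|_{H_h^*}$, proving \eqref{est:lapdiff}.

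For the two edge estimates I would reuse this pointwise machinery together with the trace inequality \eqref{btraceineq}. Estimate \eqref{est:edgelapdiff} follows immediately: the average is a convex combination of the two one-sided traces of $(\blaplace w)^\ell - \hlaplace w^\ell$, so the same pointwise bound applies on $\mathcal{E}_h$, and \eqref{btraceineq} converts $\| (\bgrad w)^\ell \|_{L^2(\mathcal{E}_h)}$ and $\| (D^2_\Gamma w)^\ell \|_{L^2(\mathcal{E}_h)}$ into $h^{-1/2}|w|_{H^1(\Gamma)} + h^{1/2}|w|_{H_h^2(\Gamma)}$ and $h^{-1/2}|w|_{H_h^2(\Gamma)} + h^{1/2}|w|_{H_h^3(\Gamma)}$; multiplying by the weight $h^{1/2}$ and the $h, h^2$ prefactors gives terms $h|w|_{H^1(\Gamma)}$, $h^2|w|_{H_h^2(\Gamma)}$, $h^3|w|_{H_h^3(\Gamma)}$, each $\lesssim h\|w\|_{H_h^*}$. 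For \eqref{est:edgegraddiff} I would first use the property \eqref{eq:tproperty} to split the jump into one-sided conormal derivatives, then rewrite $\hEnK \cdot \hgrad w^\ell$ using $\hgrad w^\ell = \hP B (\bgrad w)^\ell$ from \eqref{eq:Xa} together with $B = \bP - d H$ and $\hP \hEnK = \hEnK$, so that the per-side difference becomes $(\bEnK - \bP \hEnK) \cdot (\bgrad w)^\ell - d\, \hEnK \cdot H (\bgrad w)^\ell$. Both factors are $O(h^2)$ by \eqref{test} and \eqref{dest} respectively, giving an $L^2(\mathcal{E}_h)$ bound of $h^2 \| (\bgrad w)^\ell \|_{L^2(\mathcal{E}_h)}$; applying \eqref{btraceineq} and the weight $h^{-1/2}$ then produces $h|w|_{H^1(\Gamma)} + h^2 |w|_{H_h^2(\Gamma)} \lesssim h\|w\|_{H_h^*}$.

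The main obstacle is not the bookkeeping with trace inequalities and the weights $h^{\pm 1/2}$, which is routine once the semi-norm structure of $\| \cdot \|_{H_h^*}$ is in view, but rather verifying that the two $d$-free terms in the Laplacian expansion genuinely gain a power of $h$. A naive $L^\infty$ estimate of $\hn \cdot (D^2_\Gamma w)^\ell \cdot \hn$ and of $\hn \cdot H (D_\Gamma w)^\ell$ would only give $O(1)$, which is useless; the gain comes entirely from the identities $\bn \cdot (D^2_\Gamma w)^\ell = 0$ and $\bn^T H = 0$, which let me replace $\hn$ by $\hn - \bn$ and invoke \eqref{nest}. Getting these cancellations right, and tracking that no uncancelled $O(1)$ contribution survives in the expansion \eqref{eq:lapexpl1}--\eqref{eq:lapexplEnd}, is the heart of the argument.
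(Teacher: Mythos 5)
Your proposal is correct and follows essentially the same route as the paper's proof: both rest on the expansions \eqref{eq:lapexpl1}--\eqref{eq:lapexplEnd} and \eqref{eq:Xa}, the geometric bounds \eqref{dest}, \eqref{nest}, \eqref{test}, a change of integration domain via Lemma~\ref{sobolevestimate}, and the trace inequality \eqref{btraceineq}, with your per-side treatment of the edge-gradient term being just a reorganization of the paper's add-and-subtract splitting. In fact you make explicit the cancellations ($\bn \cdot (D^2_\Gamma w)^\ell = 0$ and $H\bn = 0$, so $\hn$ can be replaced by $\hn - \bn$) that the paper only asserts implicitly when it claims the $d$-free terms ``scale with at least $h$'' respectively ``$h^2$''.
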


\begin{proof}

\proofterm{Estimate \ref{est:lapdiff}:}
Reviewing \eqref{eq:lapexpl1} we see that the expression in estimate \eqref{est:lapdiff}
scale with at least $h$ for the first order terms and with at least $h^2$ for the second order terms.
From this and a change of integration domain the estimate directly follows as
\begin{align}
\left\|
\left( \blaplace w \right)^\ell
-  \hlaplace w^\ell 
\right\|_{L^2(\Gamma_{h})}
&\lesssim
h | w |_{H^1(\Gamma)} + h^2 | w |_{H^2(\Gamma)}
\leq h \| w \|_{H_h^*(\Gamma)}
\end{align}
where the last inequality is due to \eqref{technicalnorm}, the definition of $\|\cdot\|_{H_h^*(\Gamma)}$.

\proofterm{Estimate \ref{est:edgegraddiff}:}
By adding and subtracting terms we by the triangle inequality have
\begin{align}
&\left\|
\left(\bEn \cdot \lJ \bgrad w \rJ \right)^\ell
- \hEn \cdot \lJ \hgrad w^\ell \rJ
\right\|_{L^2(\mathcal{E}_{h})}
\nonumber
\\&\qquad\qquad\qquad\qquad\leq
\left\|
\left(\bEn \cdot \lJ \bgrad w \rJ \right)^\ell
- \bEn \cdot \lJ \bgrad w^\ell \rJ
\right\|_{L^2(\mathcal{E}_{h})}
\label{eq:dofbn0}
\\&\qquad\qquad\qquad\qquad\quad+ \nonumber
\left\|
\bEn \cdot \lJ \bgrad w^\ell \rJ
- \hEn \cdot \lJ \hgrad w^\ell \rJ
\right\|_{L^2(\mathcal{E}_{h})}
\end{align}
where the first term concerns how the operator is affected by the change in integration domain,
and the second term concerns the operator approximation.
For the first term we by \eqref{eq:firstorderderext} readily have the estimate
\begin{align}
\left\|
\left(\bEn \cdot \lJ \bgrad w \rJ \right)^\ell
- \bEn \cdot \lJ \bgrad w^\ell \rJ
\right\|_{L^2(\mathcal{E}_{h})}
&=
\left\|
\bEn \cdot (dH) \lJ \left( \bgrad w \right)^\ell \rJ
\right\|_{L^2(\mathcal{E}_{h})}
\\&\lesssim h^2
\left\|
\lJ \left( \bgrad w \right)^\ell \rJ
\right\|_{L^2(\mathcal{E}_{h})}
\end{align}
where we used \eqref{dest}, the bound for $d$, in the inequality.
For the second term in \eqref{eq:dofbn0} we by \eqref{eq:firstorderderext} and \eqref{eq:Xa} have
\begin{align}
&\left\|
\bEn \cdot \lJ \bgrad w^\ell \rJ
- \hEn \cdot \lJ \hgrad w \rJ
\right\|_{L^2(\mathcal{E}_{h})}
\\&\qquad\qquad\qquad =
\left\|
\bEn \cdot \lJ B (\bgrad w)^\ell \rJ
- \hEn \cdot \lJ \hP B (\bgrad w)^\ell \rJ
\right\|_{L^2(\mathcal{E}_{h})}
\\&\qquad\qquad\qquad =
\bigl\|
(\bEnK^+ - \bP\hEnK^+) \cdot B (\bgrad w^+)^\ell
\\&\qquad\qquad\qquad\qquad+ (\bEnK^- - \bP\hEnK^-) \cdot B (\bgrad w^-)^\ell
\bigr\|_{L^2(\mathcal{E}_{h})} \nonumber
\\&\qquad\qquad\qquad \lesssim h^2
\left\|
\lJ \left( \bgrad w \right)^\ell \rJ
\right\|_{L^2(\mathcal{E}_{h})}
\end{align}
where we use \eqref{test} in the inequality. By a change of integration domain, using the triangle inequality on the jump, and applying the trace inequality \eqref{btraceineq} we have
\begin{align}
\left\|
\lJ \left( \bgrad w_h \right)^\ell \rJ
\right\|_{L^2(\mathcal{E}_{h})}
\lesssim
h^{-1/2} | w_h |_{H^1(\Gamma)} + h^{1/2} | w_h |_{H^2(\Gamma)}
\lesssim h^{-1/2} \| w_h \|_{H_h^*(\Gamma)}
\end{align}
where the last inequality is due to \eqref{technicalnorm}, the definition of $\|\cdot\|_{H_h^*(\Gamma)}$. Estimate \eqref{est:edgegraddiff} now readily follows.

\proofterm{Estimate \ref{est:edgelapdiff}:}
By the same arguments as in the proof of estimate \eqref{est:lapdiff} 
in combination with the trace inequality \eqref{btraceineq} we have
\begin{align}
\left\|
\left(\left\lA \blaplace w \right\rA\right)^\ell
- \left\lA \hlaplace w^\ell \right\rA
\right\|_{L^2(\mathcal{E}_{h})}
&\lesssim
h | w |_{H^1(\mathcal{E}_{\Gamma})} + h^2 | w |_{H^2(\mathcal{E}_{\Gamma})}
\\&\lesssim
h \left( h^{-1/2} |w|_{H^1(\Gamma)} + h^{1/2} |w|_{H^2_h(\Gamma)} \right)
\\&\quad+ \nonumber
h^2 \left( h^{-1/2} |w|_{H^2(\Gamma)} + h^{1/2} |w|_{H^3_h(\Gamma)} \right)
\\&
\lesssim h^{1/2} \| w \|_{H_h^*(\Gamma)}
\end{align}
where the last inequality is due to the definition of $\|\cdot\|_{H_h^*(\Gamma)}$.
This concludes the proof.
\end{proof}


\begin{lemma} \label{lemma:specintegrals}
For $\eta,\phi \in V$ the following integral estimates hold
\begin{align}
\left( ( \blaplace \eta )^\ell ,
( \blaplace \phi )^\ell
- \hlaplace \phi^\ell \right)_{\Gamma_{h}}
&\lesssim h^2 \| \eta \|_{H^3(\Gamma)} \| \phi \|_{H^3(\Gamma)}
\label{est:intlap}
\\
\left( (\left\lA \blaplace \eta \right\rA)^\ell ,
(\bEn \cdot \lJ \bgrad \phi \rJ)^\ell
- \hEn \cdot \lJ \hgrad \phi^\ell \rJ \right)_{\mathcal{E}_{h}}
&\lesssim h^2 \| \eta \|_{H^4(\Gamma)} \| \phi \|_{H^4(\Gamma)}
\label{est:intgrad}
\end{align}
\end{lemma}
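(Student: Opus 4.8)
The plan is to derive both estimates from the explicit expansions of the approximate operators in Appendix~\ref{appendix:Cx} together with the non-standard geometry bound of Lemma~\ref{lemma:geomnonstandard}, crucially exploiting that $\eta,\phi\in V$ are $H^4(\Gamma)$-regular by elliptic regularity. The recurring mechanism is that whenever a term in an operator difference carries a factor $d$ or two factors of $\hP\bn$ / $\bP\hn$ it is already $\mathcal{O}(h^2)$ in the sup-norm and a plain Cauchy--Schwarz estimate with a change of measure suffices; the single genuinely first-order term in each difference is only $\mathcal{O}(h)$ pointwise and must be recast as a surface integral $\int_{\Gamma_h}(\hP\cdot\bn)\cdot\chi\,ds$ so that Lemma~\ref{lemma:geomnonstandard} supplies the missing power of $h$.

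For \eqref{est:intlap} I would insert the expansion \eqref{eq:lapexpl1}--\eqref{eq:lapexplEnd} of $\hlaplace\phi^\ell$ into $(\blaplace\phi)^\ell-\hlaplace\phi^\ell$. The second-order term $\hn\cdot(D^2_\Gamma\phi)^\ell\cdot\hn$ equals $(\bP\hn)\cdot(D^2_\Gamma\phi)^\ell\cdot(\bP\hn)$, since $D^2_\Gamma\phi$ is tangential and $\bn\cdot D^2_\Gamma\phi=0$, hence is $\mathcal{O}(h^2)$ by \eqref{Pnhest}; the terms carrying $d$ or $d^2$ are $\mathcal{O}(h^2)$ by \eqref{dest}. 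Paired against $(\blaplace\eta)^\ell$ and integrated, all of these are bounded by $h^2\|\eta\|_{H^2(\Gamma)}\|\phi\|_{H^2(\Gamma)}$. The only remaining contribution is the first-order term $2(\bn\cdot\hn)\,\hn\cdot H(D_\Gamma\phi)^\ell$; using $H\bn=0$, hence $\bn\cdot H=0$, together with $(\bn\cdot\hn)\hn=\bn-\hP\bn$, this equals $-2(\hP\bn)\cdot H(D_\Gamma\phi)^\ell$. Thus the problematic part of the integral is exactly $-2\int_{\Gamma_h}(\hP\cdot\bn)\cdot\chi\,ds$ with $\chi=(\blaplace\eta)^\ell H(D_\Gamma\phi)^\ell$, and Lemma~\ref{lemma:geomnonstandard} gives $\lesssim h^2\|\chi\|_{W_1^1(\Gamma_h)}\lesssim h^2\|\eta\|_{H^3(\Gamma)}\|\phi\|_{H^2(\Gamma)}$, completing \eqref{est:intlap}.

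The edge estimate \eqref{est:intgrad} is the main obstacle, because Lemma~\ref{lemma:geomnonstandard} lives on $\Gamma_h$ while the integral is over $\mathcal{E}_h$, and a direct sup-norm bound combined with the two trace inequalities \eqref{btraceineq} loses a full power of $h$ and only yields $\mathcal{O}(h)$. Since $\phi\in V\hookrightarrow C^2(\Gamma)$ the exact jump $(\bEn\cdot\lJ\bgrad\phi\rJ)^\ell$ vanishes, so only the term with $\hEn\cdot\lJ\hgrad\phi^\ell\rJ$ survives. Using \eqref{eq:Xa}, the property \eqref{eq:tproperty}, and the fact that each conormal is tangential to its own face, one rewrites $\hEn\cdot\lJ\hgrad\phi^\ell\rJ=\lJ\hEnK\rJ\cdot B(\bgrad\phi)^\ell$, where $B=\bP-dH$ is tangential. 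Setting $F=(\blaplace\eta)^\ell\,B(\bgrad\phi)^\ell$, which is $\Gamma$-tangential and continuous across edges, elementwise Green's formula \eqref{eq:greens} on the flat triangles---whose mean curvature vanishes---converts $\int_{\mathcal{E}_h}\lJ\hEnK\rJ\cdot F\,dl$ into the volume integral $\int_{\Gamma_h}\nabla_{\Gamma_h}\cdot F\,ds$. The corresponding exact integral $\int_{\Gamma}\nabla_\Gamma\cdot F\,ds$ vanishes by the divergence theorem on the closed surface $\Gamma$ (a tangential field), so subtracting it and changing measure isolates a measure-mismatch piece $\int_{\Gamma_h}(1-\mu_h)(\nabla_\Gamma\cdot F)^\ell\,ds$, which is $\mathcal{O}(h^2)$ by \eqref{muest}, and the genuine operator mismatch $\int_{\Gamma_h}\bigl(\nabla_{\Gamma_h}\cdot F-(\nabla_\Gamma\cdot F)^\ell\bigr)\,ds$.

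For this last integral I would use the chain-rule identity \eqref{eq:chainrule}, $\nabla F^\ell=(\nabla F)^\ell B$, to write the integrand as $\trace\bigl((B\hP-\bP)(\nabla F)^\ell\bigr)$ and compute $B\hP-\bP=-(\bP\hn)\otimes\hn-d\,H\hP$. The $d$-term is $\mathcal{O}(h^2)$ directly, while the leading part is $-(\bP\hn)\cdot\bigl((\nabla F)^\ell\hn\bigr)$; invoking the identity $\bP\hn=-\hP\bn+(1-\bn\cdot\hn)(\bn+\hn)$, whose correction is $\mathcal{O}(h^2)$ by \eqref{oneest}, reduces the leading part to $\int_{\Gamma_h}(\hP\cdot\bn)\cdot\chi\,ds$ with $\chi$ built from $(\nabla F)^\ell$ and $\hn$. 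A final application of Lemma~\ref{lemma:geomnonstandard} bounds this by $h^2\|\chi\|_{W_1^1(\Gamma_h)}$, and since $\chi$ carries one derivative of $F=(\blaplace\eta)^\ell B(\bgrad\phi)^\ell$ its $W_1^1$-norm costs two further derivatives, giving precisely $\lesssim h^2\|\eta\|_{H^4(\Gamma)}\|\phi\|_{H^4(\Gamma)}$ and explaining why the edge estimate needs the full $H^4$ regularity rather than the $H^3$ sufficient for \eqref{est:intlap}. The hard part throughout is this edge term: organizing the edge-to-volume reduction, spotting the two cancellations---the vanishing exact divergence integral and $\bP\hn=-\hP\bn+\mathcal{O}(h^2)$---that expose the $\hP\bn$ structure, and checking that the resulting integrand is regular enough to lie in $W_1^1(\Gamma_h)$.
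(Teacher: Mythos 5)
Your treatment of \eqref{est:intlap} is correct and is essentially the paper's own proof: the same reduction of all $d$-weighted and $(\bP\hn)$-weighted terms in the expansion \eqref{eq:lapexpl1}--\eqref{eq:lapexplEnd} to $\mathcal{O}(h^2)$, the same use of $H\bn=0$ to rewrite the single first-order term $2(\bn\cdot\hn)\,\hn\cdot H(\bgrad\phi)^\ell$ as $-2(\hP\bn)\cdot H(\bgrad\phi)^\ell$, and the same application of Lemma~\ref{lemma:geomnonstandard} with $\chi=(\blaplace\eta)^\ell H(\bgrad\phi)^\ell$. For \eqref{est:intgrad} your architecture (edge-to-bulk conversion by elementwise Green's formula on the flat faces, vanishing of the exact divergence integral, measure mismatch via \eqref{muest}, Lemma~\ref{lemma:geomnonstandard} for the leading term) is also the paper's; the organizational difference is that you keep the full factor $B=\bP-dH$ inside the field $F$ pushed through the divergence theorem, whereas the paper first strips off the $dH$-part at the edge level (its term $I$, estimated with \eqref{test}, \eqref{dest} and trace inequalities) and only sends $\chi_\Gamma^\ell=(\blaplace\eta)^\ell(\bgrad\phi)^\ell$ through Green's formula.

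That difference is exactly where your argument has a genuine gap. The chain-rule identity \eqref{eq:chainrule}, which you invoke as $\nabla F^\ell=(\nabla F)^\ell B$, holds only for \emph{lifted} fields, i.e.\ fields constant in the normal direction. Your $F=(\blaplace\eta)^\ell B(\bgrad\phi)^\ell$ is not lifted, because $B=\bP-dH$ varies in the normal direction through $d$ and $H$. Writing $F=G+R$ with the genuine lift $G=(\blaplace\eta)^\ell(\bgrad\phi)^\ell$ and $R=-d\,(\blaplace\eta)^\ell H(\bgrad\phi)^\ell$, your identity applies to $G$ only, and the operator mismatch picks up the additional contribution
\begin{align*}
\hgrad\cdot R
= -d\,\hgrad\cdot\bigl((\blaplace\eta)^\ell H(\bgrad\phi)^\ell\bigr)
- (\hP\bn)\cdot\bigl((\blaplace\eta)^\ell H(\bgrad\phi)^\ell\bigr)
\end{align*}
since $\hgrad d=\hP\bn$. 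The second piece is only $\mathcal{O}(h)$ pointwise, so it cannot be absorbed into your $\mathcal{O}(h^2)$ bookkeeping; it needs its own application of Lemma~\ref{lemma:geomnonstandard} (it is the paper's term $I$ in disguise), and your sketch silently drops it, so the claimed identity $\hgrad\cdot F-(\bgrad\cdot F)^\ell=\trace\bigl((B\hP-\bP)(\nabla F)^\ell\bigr)$ is false as stated. A second, smaller defect: the $\chi$ you finally feed into Lemma~\ref{lemma:geomnonstandard} is built from $(\nabla F)^\ell$ and $\hn$, and $\hn$ jumps across edges, so this $\chi$ is not single-valued on $\mathcal{E}_h$; the lemma's proof rests on elementwise Green's formula producing the conormal jump $\lJ\hEnK\rJ$ against a single-valued $\chi$, and the broken $W_1^1(\Gamma_h)$-norm does not control the extra jump terms a discontinuous $\chi$ creates. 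You must first trade $\hn$ for $\bn$ at cost $\mathcal{O}(h^2)$, using \eqref{nest} together with \eqref{Phnest}, exactly as the paper does in its term $II_3$. Both defects are reparable with the tools you already invoke, but as written a first-order term is lost and the final application of the key lemma is not legitimate.
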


\begin{proof}
Note that the $L^1(\Gamma_h)$ and $W^1_1(\Gamma_h)$ norms are defined  in \eqref{eq:L1W1norms}.

\proofterm{Estimate \ref{est:intlap}:}
Reviewing the expression for $\hlaplace \phi^\ell$ in \eqref{eq:lapexpl1}
we note that
\begin{align}
\left( ( \blaplace \eta )^\ell ,
( \blaplace \phi )^\ell
- \hlaplace \phi^\ell \right)_{\Gamma_{h}}
&\lesssim
h^2 | \eta |_{H^2(\Gamma)} \left( | \phi |_{H^1(\Gamma)} + | \phi |_{H^2(\Gamma)} \right)
\\&\quad+ \nonumber
\left(
( \blaplace \eta )^\ell ,
(\bn\cdot\hn) \hn \cdot H (\bgrad \phi)^\ell
\right)_{\Gamma_h}
\end{align}
where the remaining integral term by direct application of the bound \eqref{Pnhest} for $\bP\cdot\hn$ would only scale with $h | \eta |_{H^2(\Gamma)} | \phi |_{H^1(\Gamma)}$ which is insufficient.
Instead we make use of the non-standard geometry approximation of Lemma~\ref{lemma:geomnonstandard}
which is applicable as the integrand may be written as the product between $\hP\cdot\bn$ and a sufficiently
regular function $(\blaplace \eta)^\ell H (\bgrad \phi)^\ell$ which is clear by the following calculation
\begin{align}
\nonumber
&\left(
( \blaplace \eta )^\ell ,
(\bn\cdot\hn) \hn \cdot H (\bgrad \phi)^\ell
\right)_{\Gamma_h}
\\&\qquad\qquad=
-\left(
( \blaplace \eta )^\ell ,
(\bn - (\bn\cdot\hn) \hn) \cdot H (\bgrad \phi)^\ell
\right)_{\Gamma_h}
\\&\qquad\qquad=
-\left(
\hP \cdot \bn ,
( \blaplace \eta )^\ell H (\bgrad \phi)^\ell
\right)_{\Gamma_h}
\\&\qquad\qquad\lesssim
h^2 \| (\blaplace \eta)^\ell H (\bgrad \phi)^\ell \|_{W_1^1(\Gamma_h)}
\end{align}
As noted in Section \ref{sect:tanglift} in \eqref{eq:v9g8fhnb}-\eqref{eq:h0bds8f9h} the derivative of a lifted tangential tensor $T_\Gamma^\ell$ may be expressed in terms of the lifted tangential tensor $T_\Gamma^\ell$
and the lifted tangential derivative $(D_\Gamma(T_\Gamma)^\ell$. By this property, the product rule, the boundedness of $H$, and the Cauchy--Schwarz inequality we thus have
\begin{align}
\| (\blaplace \eta)^\ell H (\bgrad \phi)^\ell \|_{W_1^1(\Gamma_h)}
&\lesssim
\| \eta \|_{H^3(\Gamma)} \| \phi \|_{H^2(\Gamma)}
\label{eq:sg9oiuj}
\end{align}
which concludes the proof of the estimate.

\proofterm{Estimate \ref{est:intgrad}:}
First recall the Sobolev embedding $H^2(\Gamma) \hookrightarrow L^\infty(\Gamma)$ \cite[Thm. 2.20]{aubin1982}
which implies, as $\eta,\phi \in V$, that both $\blaplace\eta$ and $\bgrad\phi$ are continuous, i.e. $\left\lA \blaplace \eta \right\rA = \blaplace \eta$ and $\lJ \bgrad \phi \rJ = 0$. Thus we can rewrite the left hand side of \eqref{est:intgrad} as
\begin{align}
\nonumber
( (\left\lA \blaplace \eta \right\rA)^\ell & ,
\bEn \cdot \lJ \bgrad \phi \rJ^\ell
- \hEn \cdot \lJ \hgrad \phi \rJ )_{\mathcal{E}_{h}}
\\
&=
-\left( ( \blaplace \eta )^\ell,
\hEn \cdot \lJ \hP B (\bgrad \phi)^\ell \rJ \right)_{\mathcal{E}_{h}}
\\
&=
\left( ( \blaplace \eta )^\ell,
\hEn \cdot \lJ \hP dH (\bgrad \phi)^\ell \rJ \right)_{\mathcal{E}_{h}}
\\&\quad-\left( ( \blaplace \eta )^\ell,
\hEn \cdot \lJ \hP (\bgrad \phi)^\ell \rJ \right)_{\mathcal{E}_{h}}
= I + II
\end{align}
and we will now handle the resulting two terms separately.

Using \eqref{eq:tproperty} we write term $I$ as
\begin{align}
I &:=
\left( ( \blaplace \eta )^\ell,
\hEn \cdot \lJ \hP dH (\bgrad \phi)^\ell \rJ \right)_{\mathcal{E}_{h}}
\\&=
\left( ( \blaplace \eta )^\ell,
(\hEnK^+ + \hEnK^-) \cdot dH (\bgrad \phi)^\ell \right)_{\mathcal{E}_{h}}
\\&\lesssim
\left( \| \bP \hEnK^+ - \bEnK^+ \|_{L^\infty(\mathcal{E}_{h})}
+
\| \bP \hEnK^- - \bEnK^- \|_{L^\infty(\mathcal{E}_{h})} \right)
\\&\qquad\cdot \nonumber
\| d \|_{L^\infty(\mathcal{E}_{h})}
\| \blaplace \eta \|_{L^2(\mathcal{E}_{\Gamma})}
\| \bgrad \phi \|_{L^2(\mathcal{E}_{\Gamma})}
\\&\lesssim
h^2
\| \eta \|_{H^3({\Gamma})}
\| \phi \|_{H^2({\Gamma})}
\end{align}
where we use
$\bP \hEnK^+ + \bP \hEnK^- = (\bP \hEnK^+ - \bEnK^+) + (\bP \hEnK^- -\bEnK^-)$
and the boundedness of $H$ in the first inequality.
Noting that $\| d \|_{L^\infty(\mathcal{E}_\Gamma)} \leq \| d \|_{L^\infty(\Gamma)}$,
using bounds \eqref{dest} and \eqref{test}, and finally applying the
trace inequality \eqref{btraceineq} yields the last inequality.
For term $II$ we again use \eqref{eq:tproperty} and then the divergence theorem to write
\begin{align}
II &:=
-\left( ( \blaplace \eta )^\ell,
\hEn \cdot \lJ \hP (\bgrad \phi)^\ell \rJ \right)_{\mathcal{E}_{h}}
=
\sum_{K\in\mathcal{K}}
-\left( ( \blaplace \eta )^\ell,
\hEnK \cdot (\bgrad \phi)^\ell \right)_{\partial\hK}
\\&
=
-\sum_{K\in\mathcal{K}}
\left( 
\hgrad \cdot \chi_\Gamma^\ell
, 1 \right)_{\hK}
=
-
\left( 
\trace\left(
(\chi_\Gamma^\ell \otimes \leftgrad) \hP
\right)
, 1 \right)_{\mathcal{K}_h}
\end{align}
Before we turn to the estimation of this term we present the following calculation
\begin{align}
\chi_\Gamma^\ell \otimes \leftgrad
&=
(\chi_\Gamma^\ell \otimes \leftgrad) \times_1 \bP + (\bP^\ell \otimes \leftgrad) \, \bar{\times}_2 \, \chi_\Gamma^\ell
\\&=
\left( D_\Gamma\left( \chi_\Gamma \right) \right)^\ell \times_2 B
+ \left( (\bP \otimes \leftgrad)^\ell \, \bar{\times}_2 \, \chi_\Gamma^\ell \right)  \times_2 B
\\&=
\left( D_\Gamma\left( \chi_\Gamma \right) \right)^\ell
+ \left( (\bP \otimes \leftgrad)^\ell \, \bar{\times}_2 \, \chi_\Gamma^\ell \right)  \times_2 P
\\&\quad \nonumber
- d
\left(
\left( D_\Gamma\left( \chi_\Gamma \right) \right)^\ell \times_2 H
+ \left( (\bP \otimes \leftgrad)^\ell \, \bar{\times}_2 \, \chi_\Gamma^\ell \right)  \times_2 H
\right)
\\&=
\left( \chi_\Gamma \otimes \lbgrad \right)^\ell
- d Z
\end{align}
where
$Z := \left(
\left( D_\Gamma\left( \chi_\Gamma \right) \right)^\ell \times_2 H
+ \left( (\bP \otimes \leftgrad)^\ell \, \bar{\times}_2 \, \chi_\Gamma^\ell \right)  \times_2 H
\right)$
and we note that $\chi_\Gamma^\ell \otimes \leftgrad$ is tangential in the second tensorial dimension, i.e.
$\chi_\Gamma^\ell \otimes \leftgrad = (\chi_\Gamma^\ell \otimes \leftgrad) \bP = \chi_\Gamma^\ell \otimes \lbgrad$.
Thus, the divergence on the approximate surface of $\chi_\Gamma^\ell$ can be written
\begin{align}
\hgrad\cdot\chi_\Gamma^\ell
&=
\trace\left(
(\chi_\Gamma^\ell \otimes \lbgrad) \hP
\right)
=
\trace\left(
(\chi_\Gamma^\ell \otimes \lbgrad)
\right)
-
\trace\left(
(\chi_\Gamma^\ell \otimes \lbgrad) \hn\otimes\hn
\right)
\\&=
\left( \bgrad \cdot \chi_\Gamma \right)^\ell - d \trace\left( Z \right)
-\hn\cdot\left( \chi_\Gamma^\ell \otimes \leftgrad \right)\cdot(\bP\cdot\hn)
\end{align}
Returning to term $II$ we by using the above identity express this term as the following three terms
\begin{align}
II &=
\left(
\left( \bgrad \cdot \chi_\Gamma \right)^\ell , 1 \right)_{\mathcal{K}_h}
-d \left( \trace\left( Z \right) , 1 \right)_{\mathcal{K}_h}
- \left( \hn\cdot\left( \chi_\Gamma^\ell \otimes \leftgrad \right)\cdot(\bP\cdot\hn)
,1 \right)_{\mathcal{K}_h}
\\&= II_1 + II_2 + II_3
\end{align}
For term $II_1$ we by a change of integration and the bound \eqref{muest} for $(1 - \mu_h)$ have
\begin{align}
II_1 &:=
\left( \left( \bgrad \cdot \chi_\Gamma \right)^\ell , 1 \right)_{\mathcal{K}_h}
=
\left( \left( \bgrad \cdot \chi_\Gamma \right)^\ell , 1 - \mu_h \right)_{\mathcal{K}_h}
+ \left( \bgrad \cdot \chi_\Gamma , 1 \right)_{\mathcal{K}_\Gamma} \label{eq:visub}
\\&\lesssim \| 1 - \mu_h \|_{L^\infty(\Gamma_h)} \| \eta \|_{H^3(\Gamma)} \| \phi \|_{H^2(\Gamma)}
\lesssim h^2 \| \eta \|_{H^3(\Gamma)} \| \phi \|_{H^2(\Gamma)}
\end{align}
where the last term in \eqref{eq:visub} is zero which follows from the divergence theorem on each curved triangle
\begin{align}
\left( \bgrad \cdot \chi_\Gamma , 1 \right)_{\mathcal{K}_\Gamma}
=
\sum_{K\in\mathcal{K}}
\left( \bEnK \cdot \chi_\Gamma , 1 \right)_{\partial\bK}
=
\sum_{E\in\mathcal{E}}
\left( \bEn \cdot \lJ \bgrad\phi \rJ \blaplace\eta , 1 \right)_{\bE} = 0
\end{align}
and $\lJ \bgrad\phi \rJ = 0$.
As term $II_2$ is multiplied by the distance function $d$ we have the following estimate
\begin{align}
II_2 &:=
-
d \left( \trace\left( Z \right) , 1 \right)_{\mathcal{K}_h}
\lesssim \| d \|_{L^\infty(\Gamma_h)} \| Z \|_{L^1(\Gamma_h)}
\lesssim h^2 \| \eta \|_{H^3(\Gamma)} \| \phi \|_{H^2(\Gamma)}
\end{align}
where we use the bound \eqref{dest} for $d$ and the Cauchy--Schwarz inequality.
For term $II_3$ we now add and subtract terms to get an expression suitable for Lemma~\ref{lemma:geomnonstandard}, such that
\begin{align}
II_3 &:=
-
\left( \hn\cdot\left( \chi_\Gamma^\ell \otimes \leftgrad \right)\cdot(\bP\cdot\hn)
,1 \right)_{\mathcal{K}_\Gamma}
\\&= \label{eq:sdbuhi}
-
\left( (\hn-\bn)\cdot \left( \chi_\Gamma^\ell \otimes \leftgrad \right) , \bP\cdot\hn
\right)_{\mathcal{K}_h}
-
\left( \bn\cdot \left( \chi_\Gamma^\ell \otimes \leftgrad \right) , \bP\cdot\hn
\right)_{\mathcal{K}_h}
\\&\lesssim
\| \bn - \hn \|_{L^\infty(\Gamma_h)} \| \bP\cdot\hn \|_{L^\infty(\Gamma_h)}
\| \chi_\Gamma^\ell \otimes \leftgrad \|_{L^1(\Gamma_h)}
\\&\quad \nonumber
-
\left( \bn\cdot \left( \chi_\Gamma^\ell \otimes \leftgrad \right) , (1 - \bn \cdot \hn) (\bn + \hn)
\right)_{\mathcal{K}_h}
\\&\quad \nonumber
+
\left( \bn\cdot \left( \chi_\Gamma^\ell \otimes \leftgrad \right) , \hP\cdot\bn
\right)_{\mathcal{K}_h} 
\\&\lesssim \label{eq:nbd98fh} h^2
\left(
\| \chi_\Gamma^\ell \otimes \leftgrad \|_{L^1(\Gamma_h)}
+
\| \bn\cdot\left( \chi_\Gamma^\ell \otimes \leftgrad \right) \|_{W^1_1(\Gamma_h)}
\right)
\\&\lesssim h^2
\| \eta \|_{H^4(\Gamma)} \| \phi \|_{H^3(\Gamma)}
\end{align}
where we rewrite the last term in \eqref{eq:sdbuhi} using the identity
$\bP\cdot\hn = (1 - \bn \cdot \hn) (\bn + \hn) - \hP \cdot \bn$
and the first inequality is due to bounds \eqref{nest} and \eqref{Pnhest}.
The second inequality then follows from the bound \eqref{muest} on $1-\mu_h$ and
Lemma~\ref{lemma:geomnonstandard} applied to the last term in \eqref{eq:nbd98fh}.
Finally, the last inequality follows by the same motivation as \eqref{eq:sg9oiuj}.
This concludes the proof of the estimate and the lemma.
\end{proof}


\section{Matlab code} \label{app:matlab}
Below we provide {\sc Matlab} code for the load density function in the model problem on the torus.

{
\footnotesize
\begin{verbatim}
function f=loadfcn(r,phi,th)
f =(9*r^4*sin(2*phi + th) + 491*r^4*sin(4*phi + th) + 324*R^4*sin(2*phi - 3*th) + ...
324*R^4*sin(4*phi + 3*th) + 179*r^4*sin(2*phi - th) + 313*r^4*sin(2*phi - 3*th) + ...
9*r^4*sin(4*phi - th) + 179*r^4*sin(2*phi - 5*th) + 1561*r^4*sin(4*phi + 3*th) + ...
36*r^4*sin(2*phi - 7*th) + 347*r^4*sin(4*phi + 5*th) + 36*r^4*sin(4*phi + 7*th) + ...
366*R^2*r^2*sin(2*phi - th) + 1386*R^2*r^2*sin(2*phi - 3*th) + ...
696*R^2*r^2*sin(2*phi - 5*th) + 2250*R^2*r^2*sin(4*phi + 3*th) + ...
696*R^2*r^2*sin(4*phi + 5*th) + 99*R*r^3*sin(2*phi) + 821*R*r^3*sin(2*phi - 2*th) + ...
570*R^3*r*sin(2*phi - 2*th) + 875*R*r^3*sin(2*phi - 4*th) + ...
1781*R*r^3*sin(4*phi + 2*th) + 798*R^3*r*sin(2*phi - 4*th) + ...
570*R^3*r*sin(4*phi + 2*th) + 261*R*r^3*sin(2*phi - 6*th) + ...
1547*R*r^3*sin(4*phi + 4*th) + 798*R^3*r*sin(4*phi + 4*th) + ...
261*R*r^3*sin(4*phi + 6*th) + 366*R^2*r^2*sin(4*phi + th) + ...
198*R*r^3*cos(2*phi)*sin(2*phi))/(8*R^4*r^4 + 32*R^3*r^5*cos(th) + ...
48*R^2*r^6*cos(th)^2 + 32*R*r^7*cos(th)^3 + 8*r^8*cos(th)^4);
end
\end{verbatim}\vspace{-3pt}
}



\bibliographystyle{abbrv}
\bibliography{BiharmonicRef_v2}
\end{document}